\numberwithin{equation}{section}
\newtheorem{definition}[equation]{Definition}
\newtheorem{proposition}[equation]{Proposition}
\newtheorem{theorem}[equation]{Theorem}
\newtheorem{corollary}[equation]{Corollary}
\newtheorem{lemma}[equation]{Lemma}
\newtheorem{remark}[equation]{Remark}
\title{Semi-Continuity of the Morse Index for Ricci Shrinkers}
\author{Louis Yudowitz\footnote{KTH Royal Institute of Technology, Department of Mathematics, 
Lindstedtsv\"agen 25, 114 28 Stockholm, Sweden\\ \indent \indent \emph{E-mail address:} \texttt{yudowitz@kth.se}\\ \indent \indent ORCID ID: 0000-0003-4131-2396}}
\date{\today}
\begin{document}
\maketitle

\begin{abstract}
  We prove lower and upper semi-continuity of the Morse index for sequences of gradient Ricci shrinkers which bubble tree converge in the sense of past work by the author and Buzano. Our proofs rely on adapting recent arguments of Workman which were used to study certain sequences of CMC hypersurfaces and were in turn adapted from work of Da Lio--Gianocca--Rivi\`ere. Moreover, we are able to refine Workman's methods by using techniques related to polynomially weighted Sobolev spaces. This all also requires us to extend the analysis to handle when the shrinkers we study are non-compact, which we can do due to the availability of a suitable notion of finite weighted volume. Finally, we identify a technical condition which ensures the Morse index of an asymptotically conical shrinker is bounded below by the f-index of its asymptotic cone.
\end{abstract}

\textbf{Keywords:} Ricci flow, gradient Ricci shrinker, bubble tree convergence, orbifold singularities

\textbf{Mathematics Subject Classification:} 53C25, 53E20, 53C21

\tableofcontents

\section{Introduction}\label{introduction}
  In this article we study how the linear stability of sequences of gradient Ricci shrinkers can change when conical singularities form in the limit. These shrinkers are Riemannian manifolds $\left(M,g\right)$ such that, for some \emph{potential function} $f: M \rightarrow \mathbb{R}$, the following identity is satisfied:

  \begin{equation}\label{grs_eqn}
    \mathrm{Ric}\left(g\right) + \nabla^2 f = \frac{g}{2}.
  \end{equation}

  By now, it is known that gradient Ricci shrinkers model many finite time singularities of Ricci flow, a geometric heat equation which evolves a time dependent family of Riemannian metrics according to $\partial_t g\left(t\right) = -2\mathrm{Ric}\left(g\left(t\right)\right)$. Additionally, each shrinker induces a self similarly shrinking Ricci flow with $g\left(t\right) = -t \varphi^\ast_t g\left(-1\right)$, where $t \in \left(-\infty,0\right)$ and $\varphi_t$ the family of diffeomorphisms generated by $\frac{\nabla f}{-t}$ with $\varphi_{-1} = \mathrm{id}_M$. Some easy examples of shrinkers are positive Einstein manifolds (e.g. the round sphere $S^n$) by taking $f$ to be a constant function. Other examples of shrinkers are the round cylinder $S^{n-1} \times \mathbb{R}$, or more generally $M \times \mathbb{R}^k$ with the product metric, $k \in \mathbb{N}$, and $M$ a positive Einstein manifold.

  When studying finite time Ricci flow singularities, one considers ``blow-up limits'' to zoom into the singular region and understand its geometry at small scales. This is done by parabolically rescaling the flow to get a sequence of manifolds $\left(M, g_i\left(t\right),p_i\right)$, where $p_i$ is a sequence of points around which the curvature becomes unbounded as $t \nearrow T$, $g_i\left(t\right) := s^{-2}_i g\left(T + s^2_i t\right)$, and $s_i \rightarrow 0$ is a sequence of scaling factors dictated by the curvature blow-up. If the finite time singularity is ``Type I'', then any blow-up limit will subconverge in the smooth pointed Cheeger--Gromov sense to a non-flat gradient Ricci shrinker (\cite{emt}). On the other hand, Bamler has recently shown that, without the Type I assumption, blow-up limits (taken in a suitable weak sense) are either asymptotic to or are globally a gradient shrinker and have a codimension $4$ singular set in a certain parabolic sense (\cite{bamler_compactness,bamler_nash,bamler_structure}). If one specializes to dimension $n=4$, then the singular set consists of isolated conical singularities of orbifold type. A precise definition of orbifold points is given in Section \ref{bubble_tree_prelims}.

  If one instead focuses on just the space of Ricci shrinkers, orbifold compactness results were known before Bamler's work. In the compact case, these include results due to Cao--Sesum (\cite{CS07_compactness}), and Weber (\cite{We11_compactness}), while in the complete case we note the work by Li--Li--Wang (\cite{li_li_wang_grs}) and the following theorem by Haslhofer--M\"uller from \cite{HM1,HM2}, which is the most pertinent for this paper:

  \begin{theorem}\label{hm_compactness_thm}
    \sloppy Let $n \geq 4$ and let $\left(M_i,g_i,f_i\right)$ be a sequence of $n$-dimensional gradient Ricci shrinkers with entropy uniformly bounded below $\mu\left(g_i\right)\geq \underline{\mu}>-\infty$ which are normalized such that $\left(4\pi\right)^{-\frac{n}{2}}\int_M e^{-f} dV_g = 1$. If $n \geq 5$, then assume in addition a positive constant $E\left(r\right)$ exists such that the following uniform local energy bound holds for every $i$ and each $r > 0$:

    \begin{equation}\label{energybounds}
      \int_{B_{g_i}\left(p_i,r\right)} \left|\mathrm{Rm}_{g_i}\right|^{\frac{n}{2}}_{g_i} dV_{g_i} \leq E\left(r\right) < \infty.
    \end{equation}

    Then, with $p_i$ a minimum of the potential function $f_i$, after passing to a subsequence $\left(M_i,g_i,f_i,p_i\right)$ converges to an orbifold Ricci shrinker $\left(M_\infty,g_\infty,f_\infty,p_\infty\right)$ in the pointed orbifold Cheeger--Gromov sense. 
  \end{theorem}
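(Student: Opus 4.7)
The plan is to combine three ingredients: (i) uniform non-collapsing from the entropy hypothesis, (ii) an $\varepsilon$-regularity theorem converting small local $L^{n/2}$ curvature into pointwise curvature control, and (iii) pointed Cheeger--Gromov compactness away from a discrete bad set, with orbifold singularities extracted at the bad points by a rescaling argument. Non-compactness is handled via the Cao--Zhou growth estimate for the potential, which under our normalization yields $\tfrac{1}{4}(d_{g_i}(\cdot,p_i)-c)^2 \leq f_i \leq \tfrac{1}{4}(d_{g_i}(\cdot,p_i)+c)^2$ and, together with the identity $R + |\nabla f|^2 - f = \text{const}$, controls the geometry at infinity.

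First, I would invoke Perelman's $\mu$-invariant: $\mu(g_i)\geq\underline\mu$ implies $\kappa$-non-collapsing at all scales for each $(M_i,g_i)$, with $\kappa$ independent of $i$. Next, I would establish an $\varepsilon$-regularity statement for shrinkers of the form: there exist $\varepsilon_0,C>0$ such that if $\int_{B_{g_i}(x,r)}|\mathrm{Rm}|^{n/2}\,dV < \varepsilon_0$ and $r^{-n}\mathrm{vol}(B_{g_i}(x,r))\geq\kappa$, then $\sup_{B_{g_i}(x,r/2)}|\mathrm{Rm}|\leq Cr^{-2}$. This is proved by differentiating \eqref{grs_eqn} to obtain a drift-Bochner inequality $\Delta_f |\mathrm{Rm}|^2 \geq -C|\mathrm{Rm}|^3 + c|\nabla\mathrm{Rm}|^2 - C|\mathrm{Rm}|^2$, and Moser iterating after absorbing the drift $\nabla f$ using the above growth bound. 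In dimension $n=4$ the energy hypothesis \eqref{energybounds} is automatic, as the Chern--Gauss--Bonnet formula combined with the traced shrinker identity bounds $\int|\mathrm{Rm}|^2$ in terms of the topology and $f$-volume.

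Armed with $\varepsilon$-regularity, at each fixed radius $r$ the local energy bound \eqref{energybounds} forces the ``bad'' set $S_i^r = \{x\in B_{g_i}(p_i,r) : \int_{B_{g_i}(x,\rho)}|\mathrm{Rm}|^{n/2}\geq\varepsilon_0\text{ for all }\rho>0\}$ to be finite of bounded cardinality. Away from $S_i^r$, the curvature is pointwise bounded on $i$-independent scales, so a diagonal Cheeger--Gromov argument produces a smooth limit metric $g_\infty$ on the complement of a discrete set $S\subset M_\infty$. Convergence of the potentials follows from the elliptic equation $\Delta f_i = \tfrac{n}{2}-R_{g_i}$ together with the Cao--Zhou bounds, upgrading $C^0_{\mathrm{loc}}$ subconvergence of $f_i$ to smooth convergence on compact subsets of $M_\infty\setminus S$; passing to the limit in \eqref{grs_eqn} yields that $(M_\infty\setminus S,g_\infty,f_\infty)$ is a smooth shrinker.

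The main obstacle is to show that each $p\in S$ is an isolated orbifold singularity. At such a point one rescales $\tilde g_i := \lambda_i^{2} g_i$ around curvature-concentration points with $\lambda_i\to\infty$; the shrinker equation rescales to $\mathrm{Ric}(\tilde g_i) + \tilde\nabla^2 f_i = \tfrac{1}{2\lambda_i^2}\tilde g_i \to 0$, so any smooth pointed limit is a complete Ricci-flat ALE manifold with bounded $L^{n/2}$ curvature. One then applies the Bando--Kasue--Nakajima (Tian--Viaclovsky) removable-singularities and orbifold structure theorem to conclude that $p$ is modelled on $\mathbb{R}^n/\Gamma$ for a finite $\Gamma\leq O(n)$, and that finitely many such bubbles suffice to account for the curvature concentration (again by $\varepsilon$-regularity and \eqref{energybounds}). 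Finiteness of $S$ on compact sets follows from the same $\varepsilon_0$-budget argument, completing the orbifold Cheeger--Gromov convergence.
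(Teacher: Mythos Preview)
This theorem is not proved in the present paper: it is quoted verbatim as a background result of Haslhofer--M\"uller (references \cite{HM1,HM2}), and the paper only refers the reader to Sections~2--3 of \cite{HM1} for the argument. So there is no ``paper's own proof'' to compare your proposal against.

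That said, your outline is a faithful sketch of the Haslhofer--M\"uller strategy. The paper in fact records several of the ingredients you invoke: the potential growth estimate \eqref{potential_growth} (your Cao--Zhou bound), the volume bounds \eqref{grs_upper_volume}--\eqref{grs_lower_volume} from the entropy hypothesis (your non-collapsing), and the $\varepsilon$-regularity Lemma~\ref{hm_eps_reg} (your step~(ii)). The orbifold extraction you describe via rescaling to Ricci-flat ALE limits and invoking Bando--Kasue--Nakajima type removable-singularity theorems is indeed how \cite{HM1} handles the bad set; see also the discussion preceding Definition~\ref{defn_orb_grs_conv} and Remark~\ref{orbifold_remark}. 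Your remark that the $n=4$ energy bound is automatic via Chern--Gauss--Bonnet is exactly the content of \cite{HM2}, as the paper notes after Theorem~\ref{semi_continuity_thm}. One small caveat: in your $\varepsilon$-regularity step, the drift term $\nabla f$ is not absorbed purely via the growth bound on $f$ but rather by working locally on balls of bounded radius where $|\nabla f|$ is uniformly bounded by \eqref{grs_auxiliary} and \eqref{potential_growth}; this is why the constants in Lemma~\ref{hm_eps_reg} depend on $r$.
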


  Here $\mu\left(g_i\right)$ denotes Perelman's entropy functional, which we define in Section \ref{shrinker_prelims}. The pointed orbifold Cheeger--Gromov convergence mentioned above is defined in Section \ref{bubble_tree_prelims}.

  In light of Theorem \ref{hm_compactness_thm}, a natural question is how precisely orbifold singularities form and how to deal with any degenerations they cause. This was investigated in \cite{by_bubbling} by the author and Buzano. There, it was shown that orbifold singularities form due to energy concentration, which causes a loss of energy (hence possibly also topology by Chern--Gauss--Bonnet) in the limit and that this lost information can be recovered by constructing a ``bubble tree''. All of this is defined and discussed in more depth in Section \ref{bubble_tree_prelims}.

  We now change topic to the stability of singularity models, which is still a wide open question. If a singularity model is unstable, then it could in principle be perturbed away in favor of a ``generic'' Ricci flow. This would pave the way for geometric and topological applications. Stability results for certain types of Einstein manifolds and Ricci solitons have been proved in a variety of places. A non-exhaustive list is: \cite{cao_zhu_lin_stable,der_ozuch_stability,deruelle_soliton_stability,hall_murphy_stability, HM_stability,kroncke_yudowitz_stability,kroncke_einstein_stability,kroncke_soliton_stability}. While we do not prove such results in this paper, we hope that our results and methods will be of help in the future. In particular, we study how linear (in)stability of shrinkers changes under orbifold convergence. By linear stability of a Ricci shrinker we mean the following:

  \begin{definition}\label{defn_lin_stable}
    Let $\left(M,g,f\right)$ be a complete connected gradient Ricci shrinker. Then the shrinker is \emph{linearly stable} if the operator

    \begin{equation*}
      L_f u := \Delta_f u + 2\mathrm{Rm}_g \ast u
    \end{equation*}

    has no negative eigenmodes other than $\mathrm{Ric}_g$ (at least in a suitable weak sense we introduce in Section \ref{grs_functional}). If, other than $\mathrm{Ric}_g$, $L_f$ has only positive eigenvalues then we say the shrinker is \emph{strictly linearly stable}. Any shrinker which is not linearly stable is \emph{linearly unstable}.
  \end{definition}

  In Definition \ref{defn_lin_stable} the drift Laplacian $\Delta_f$ is defined as

  \begin{equation*}
        \Delta_f u := \Delta u - \left<\nabla f, \nabla u\right>
    \end{equation*}

    with $\Delta u := \mathrm{div}_g \nabla_g u$, while the action of $\mathrm{Rm}_g$ on a symmetric $2$-tensor $u$ is defined as

    \begin{equation*}
        \left(\mathrm{Rm}_g \ast u\right)_{ij} := g^{kp}g^{\ell q} \mathrm{Rm}_{ik\ell j}u_{pq}.
    \end{equation*}

  One might worry that the spectrum of $L_f$ could behave quite badly if the associated shrinker is non-compact. However, we will see in Section \ref{grs_functional} that the spectrum of $L_f$ will always be discrete, as long as we work in the weighted Sobolev space $W^{1,2}_f$. This space is defined with respect to a weighted volume measure that arises naturally when studying Ricci shrinkers.

  The notion of stability in Definition \ref{defn_lin_stable} is motivated by looking at the linearization of \eqref{grs_eqn} after modifying it by a natural gauge term (see, for instance, Proposition $2.1$ in \cite{deruelle_soliton_stability}). The assumption about the Ricci curvature is due to $\mathrm{Ric}_g$ always being a negative eigenmode of $L_f$ on any shrinker. Definition \ref{defn_lin_stable} is admittedly a bit unsatsfying, since one would ideally want to involve the linear operator which appears when taking the second variation of the entropy $\mu\left(g\right)$, the critical points of which are precisely gradient Ricci shrinkers. Moreover, having a more systematic way of disregarding the Ricci curvature as an eigenmode would also be desirable. In the compact setting, this can all be done as shown by Cao--Zhu in \cite{cao_zhu_lin_stable}. There, the authors prove that the stability operator $N_f$ appearing in the second variation of $\mu\left(g\right)$ reduces to $L_f$ acting on a space which consists of elements that are orthogonal in some sense to $\mathrm{Ric}_g$ and lie in $\mathrm{ker}\left(\mathrm{div}_{g,f}\right)$, where $\mathrm{div}_{g,f} u := e^f\mathrm{div}_g\left(e^{-f}u\right)$. Also, one can show $N_f \mathrm{Ric}_g = 0$, so in this sense the Ricci curvature is really a neutral eigenmode. As far as the author knows, it is a difficult open problem to show this in the non-compact setting. Specifically, the infimum in the definition of the entropy might not be achieved. In \cite{zhang_entropy}, a condition on the asymptotic geometry is identified which, when satisfied, guarantees the infimum in the definintion of the entropy is achieved.  However, it is not clear how to verify this condition in general and it is violated by the round cylinder $S^{n-1} \times \mathbb{R}$. This means the infimum might not be achieved even in a small neighborhood of very special metrics.

  As mentioned earlier, the work by the author and Buzano in \cite{by_bubbling} shows if a sequence of shrinkers converges to an orbifold, then lost topology and energy can be recovered after constructing the bubble trees. However, there is still the question of how the formation of orbifold singularities affects the linear stability of shrinkers. Addressing this is the content of our main theorem:

  \begin{theorem}\label{semi_continuity_thm}
    For $n \geq 4$, assume $\left(M_i, g_i, f_i\right)$ is a sequence of $n$-dimensional complete connected gradient Ricci shrinkers with $\mu\left(g_i\right) \geq \underline{\mu} > -\infty$, $\left(4\pi\right)^{-\frac{n}{2}} \int_{M_i} e^{-f_i} dV_{g_i} = 1$, $R_{g_i} + \sup_{M_i \backslash K_i} \left|\mathrm{Rm}_{g_i}\right| \leq R_0$ for some compact set $K_i \subset M_i$, and, if $n \geq 5$, satisfying \eqref{energybounds}. Assume also that the sequence bubble tree converges to an orbifold Ricci shrinker $\left(M_\infty,g_\infty,f_\infty\right)$ with singular set $\mathcal{Q}$ and there is a $T \in \mathbb{N}$ such that $\sum_{q \in \mathcal{Q}} N_q = T$, where $N_q$ is the number of bubbles arising from considering the point-scale sequences associated to $q \in \mathcal{Q}$. Then

    \begin{align}
      \mathrm{Ind}_{f_\infty}\left(M_\infty\right) + \sum_{q \in \mathcal{Q}} \sum^{N_q}_{k=1} \mathrm{Ind}\left(V^k\right) &\leq \liminf \limits_{i \rightarrow \infty} \mathrm{Ind}_{f_i}\left(M_i\right), \label{lower}\\
      \limsup \limits_{i \rightarrow \infty} \left(\mathrm{Ind}_{f_i}\left(M_i\right) + \mathrm{Null}_{f_i}\left(M_i\right)\right) \leq \mathrm{Ind}_{f_\infty}\left(M_\infty\right) &+ \mathrm{Null}_{f_\infty}\left(M_\infty\right) + \sum_{q \in \mathcal{Q}} \sum^{N_q}_{k=1} \mathrm{Ind}\left(V^k\right) + \mathrm{Null}\left(V^k\right). \label{upper}
    \end{align}

    Here the index of the bubbles is defined as $\mathrm{Ind}\left(V^k\right) := \lim \limits_{R \rightarrow \infty} \mathrm{Ind}\left(B_{h^k}\left(q^k,R\right)\right)$ for some point $q^k \in V^k$ and the nullity is defined analogously. 
  \end{theorem}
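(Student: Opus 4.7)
My plan is to exploit the variational characterization of the Morse index through the weighted Rayleigh quotient
\[
\mathcal{R}_f(u) := \frac{-\int_M \langle L_f u, u\rangle_g \, e^{-f}\, dV_g}{\int_M |u|^2_g \, e^{-f}\, dV_g}
\]
together with min-max, using the fact established in Section \ref{grs_functional} that $L_f$ has discrete spectrum on $W^{1,2}_f$ so that the index and nullity are finite counts of eigenvalues. The two inequalities \eqref{lower} and \eqref{upper} are proved separately, adapting the Workman and Da Lio--Gianocca--Rivi\`ere schemes to this weighted non-compact setting.

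For \eqref{lower}, I would take an $L^2_{f_\infty}$-orthonormal family of negative eigenmodes of $L_{f_\infty}$ realizing $\mathrm{Ind}_{f_\infty}(M_\infty)$, together with, for each bubble $V^k$ rooted at $q \in \mathcal{Q}$, compactly supported negative eigenmodes on $B_{h^k}(q^k, R)$ realizing $\mathrm{Ind}(V^k)$ for $R$ large. Each is truncated: the $M_\infty$-modes away from $\mathcal{Q}$ via logarithmic cut-offs adapted to the weight $e^{-f_\infty}$, the bubble modes inside their supporting balls. Transplanting via the bubble tree charts and undoing the rescalings at the point-scale parameters yields a family of test sections on $M_i$ with essentially disjoint support, hence nearly $L^2_{f_i}$-orthogonal. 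A direct computation shows their $\mathcal{R}_{f_i}$ values remain strictly negative to leading order, so min-max forces at least $\mathrm{Ind}_{f_\infty}(M_\infty) + \sum_q \sum_k \mathrm{Ind}(V^k)$ negative modes of $L_{f_i}$.

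For \eqref{upper}, I would take an $L^2_{f_i}$-orthonormal family $\{u^j_i\}$ of eigenfunctions with eigenvalue $\leq 0$ realizing $\mathrm{Ind}_{f_i}(M_i) + \mathrm{Null}_{f_i}(M_i)$ and extract limits. Since the eigenvalues are bounded above by $0$ and the norms are normalized, one obtains weak subsequential limits on $M_\infty$ away from $\mathcal{Q}$ and, after rescaling at each $q$ by the bubble-tree scales, weak limits on each bubble $V^k$. Elliptic regularity upgrades these to smooth convergence on compact sets off the singular set, and each limit is an eigenfunction with non-positive eigenvalue on the corresponding manifold. The polynomially weighted Sobolev framework advertised in the abstract enters here twice: to control the behaviour of the $u^j_i$ near infinity of $M_i$ using the quadratic growth of $f_i$, ruling out mass escape to infinity, and to provide the correct function spaces on each non-compact bubble $V^k$.

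The technical heart of the argument, and the main obstacle, is a ``no-neck mass'' statement for \eqref{upper}: a priori an eigenfunction with non-positive eigenvalue could concentrate its $L^2_f$-mass in a neck region, producing a spectral contribution visible on neither $M_\infty$ nor any $V^k$. I would rule this out by combining (i) the quantitative bubble tree decomposition from \cite{by_bubbling}, which gives $|\mathrm{Rm}| \to 0$ and controlled weighted volume on the necks; (ii) Moser iteration on $L_f$-eigenfunctions to turn $L^2_f$-smallness into pointwise smallness; and (iii) a covering argument showing the neck contribution to $\|u^j_i\|^2_{L^2_{f_i}}$ tends to zero. Executing this uniformly across all orbifold points and all levels of the bubble tree, while accounting for non-compactness of $M_\infty$ and the $V^k$, is exactly where the polynomially weighted refinement of Workman's method is indispensable.
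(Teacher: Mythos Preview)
Your treatment of \eqref{lower} is essentially the paper's: transplant negative eigenmodes from $M_\infty$ and from compact exhaustions of the bubbles via cut-offs, use approximate disjointness of supports, and conclude by min-max. That part is fine.

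The proposal for \eqref{upper}, however, misses the central device of the paper and would not close as written. The paper does \emph{not} work with the unweighted eigenvalue problem and then argue ``no neck mass'' a posteriori. Instead it introduces a weight $\omega_{i,S,\delta}(x)\sim d_{g_i}^{-2}(x,\mathcal{Q})$ (capped near the bubble scales) and studies the \emph{weighted} eigenvalue problem
\[
\mathcal{B}_{f_i}[u,\varphi]=\lambda\int_{M_i}\omega_{i,S,\delta}\,\langle u,\varphi\rangle\, e^{-f_i}\,dV_{g_i}.
\]
Two things are bought by this. First, the weighted problem is scale-invariant under the bubble blow-ups, so after rescaling one lands in a weighted eigenspace on $V^k$ with the \emph{same} sign of eigenvalue, not merely in $\ker L_{h^k}$. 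Second, and this is where your plan breaks, the neck lemma (Proposition~\ref{spectral_neck}) shows that on a neck the \emph{weighted} Rayleigh quotient is bounded below by a fixed $\lambda_0>0$, because the Lorentz--Sobolev inequality gives $\int_{\mathcal N}\omega|u|^2 e^{-f}\leq C W\|\nabla u\|_{L^2_f}^2$ and one then absorbs $|\mathrm{Rm}|\leq\varepsilon\,\omega$. Your unweighted analogue would need the first Dirichlet eigenvalue of $-\Delta_f$ on a neck $A_{r_1,r_2}$ to stay bounded away from zero, but on a long annulus (with $r_1/r_2\to 0$) this eigenvalue tends to zero, so a sequence of $\lambda_i\le 0$ eigenmodes normalized in $L^2_{f_i}$ can in principle push all their mass into the necks without producing a positive lower bound for $\mathcal B_{f_i}$. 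Moser iteration and a covering argument do not circumvent this: they upgrade $L^2$ to $L^\infty$ locally, but give no uniform Poincar\'e-type inequality on necks whose conformal length diverges. The weight $\omega\sim d^{-2}$ is exactly what restores a scale-invariant Hardy/Sobolev control on the neck and makes the injectivity argument (Lemma~\ref{main_contradiction_arg}) go through.

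You have also misplaced the role of the polynomially weighted Sobolev spaces. They are \emph{not} used to control mass escape to infinity on $M_i$ (that is handled by the $e^{-f_i}$ weight and Proposition~\ref{weighted_sobolev_compact}). They enter only at the very end, in Proposition~\ref{bubble_nullity_equivalence_result}, to show that on each ALE bubble the $\omega$-weighted nullity equals the ordinary $L^2$-nullity: one observes that $W^{1,2}_{\omega}(V^k)\subset W^{1,2}_{\,n/2-1}(V^k)$ and then invokes the Fredholm statement of Proposition~\ref{ale_fredholm} at the non-exceptional weight $\beta=n/2-1$. Without this step you cannot pass from the weighted count back to the unweighted $\mathrm{Null}(V^k)$ appearing in \eqref{upper}.
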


  The objects and terminology in the statement of Theorem \ref{semi_continuity_thm}, in particular the various indexes, nullities, and bubble tree convergence and associated terminology, will be gradually introduced throughout Section \ref{preliminaries} and Section \ref{functional}. Moving forward, we will use $\left(\mathcal{A}\right)$ to denote the following set of assumptions on a sequence of $n$-dimensional complete connected gradient Ricci shrinkers $\left(M_i,g_i,f_i\right)$:

  \begin{itemize}
    \item $\mu\left(g_i\right) \geq \underline{\mu} > -\infty$,
    \item $R_{g_i} + \sup_{M_i \backslash K_i} \left|\mathrm{Rm}_{g_i}\right| \leq R_0$ for some compact set $K_i \subset M_i$,
    \item If $n \geq 5$, then \eqref{energybounds} is satisfied,
    \item There is a $T \in \mathbb{N}$ such that $\sum_{q \in \mathcal{Q}} N_q = T$, where $N_q$ is the number of bubbles arising from considering the point-scale sequences associated to $q \in \mathcal{Q}$.
  \end{itemize}

  Theorem \ref{semi_continuity_thm} is most striking in dimension $n = 4$ and the assumptions also simplify in this dimension. First, the theorem handles the only type of singularities that can be present in blow-up limits of $4$-dimensional closed Ricci flows by Bamler's work in \cite{bamler_structure}. Also, \cite{HM2} shows \eqref{energybounds} is always satisfied in dimension $4$. The asymptotic curvature bound $\sup_{M_i \backslash K_i} \left|\mathrm{Rm}\left(g_i\right)\right| < \infty$, which we need to ensure good functional and spectral analytic properties of $L_{f_i}$, is implied by the scalar curvature bound in dimension $n = 4$ due to work of Munteanu--Wang (Theorem $1.2$ in \cite{mw_conical}). It is conjectured that every ($4$-dimensional) shrinker has bounded scalar curvature, but this is so far only known in the K\"ahler surface case (\cite{li_wang_4d_kahler}) and when $n \leq 3$. It is also worth noting that asymptotically conical shrinkers, in any dimension, always satisfy the asymptotic curvature bound (Proposition $2.1(3)$ in \cite{kw_conical_rigidity}). We assume $R_{g_i} \leq R_0$ and $\sum_{q \in \mathcal{Q}} N_q = T < \infty$ so that the sums involving the index and nullity of all the bubbles are finite. Without this, \eqref{upper} would trivially hold and \eqref{lower} might not hold. On the other hand, these assumptions can be relaxed if one is only interested in local behavior. We discuss this in more depth at the end of Section \ref{bubble_tree_prelims}. Finally, we remark that recent work by Bertellotti--Buzano (Theorem $1.2$ and Remark $4.2$ in \cite{bert_buz}) shows that the assumptions of Theorem \ref{semi_continuity_thm} imply that the number of ends of the limiting orbifold and each shrinker in the sequence is uniformly bounded above by a constant depending only on $n, R_0, \underline{\mu}$. 

  If we assume the $M_i$ are closed, then so is $M_\infty$ by Theorem $1.4$ in \cite{mw_conical}, which proves that the uniform entropy bound implies a uniform diameter bound. This, along with Theorem \ref{semi_continuity_thm}, the lower semi-continuity of the energy, and Corollary $1.5$ from \cite{by_bubbling} yield the following corollary:

  \begin{corollary}\label{cmpct_corollary}
    For $n \geq 4$, let $\mathcal{M}^n\left(\underline{\mu},E,I\right)$ be the collection of closed $n$-dimensional complete connected gradient Ricci shrinkers $\left(M,g,f\right)$ with entropy uniformly bounded below $\mu\left(g\right) \geq \underline{\mu} > -\infty$, $\left(4\pi\right)^{-\frac{n}{2}} \int_M e^{-f} dV_g = 1$, energy uniformly bounded above 

    \begin{equation*}
      \int_M \left|\mathrm{Rm}_g\right|^{\frac{n}{2}} dV_g \leq E < \infty,
    \end{equation*}

    and index uniformly bounded above $\mathrm{Ind}_f\left(M\right) \leq I < \infty$. Then $\mathcal{M}^n\left(\underline{\mu},E,I\right)$ is compact in the orbifold Cheeger--Gromov sense and contains a finite number of diffeomorphism types, the number of which is bounded above by a positive constant depending only on $n, \underline{\mu}, E, I$.
  \end{corollary}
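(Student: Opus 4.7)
The plan is to extract an orbifold limit of an arbitrary sequence in $\mathcal{M}^n\left(\underline{\mu},E,I\right)$, verify that the limit again lies in this class, and then appeal to the bubble tree analysis to control diffeomorphism types. First I would take an arbitrary sequence $\left(M_i,g_i,f_i\right) \in \mathcal{M}^n\left(\underline{\mu},E,I\right)$ and observe that the global bound $\int_{M_i}\left|\mathrm{Rm}_{g_i}\right|^{n/2}\,dV_{g_i} \leq E$ trivially implies the local energy bound \eqref{energybounds} required in the hypotheses of Theorem \ref{hm_compactness_thm}, so after passing to a subsequence we obtain pointed orbifold Cheeger--Gromov convergence to a shrinker $\left(M_\infty,g_\infty,f_\infty,p_\infty\right)$. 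Theorem $1.4$ of \cite{mw_conical} converts the uniform entropy bound into a uniform diameter bound, so the closedness of each $M_i$ forces $M_\infty$ to be closed; in particular the auxiliary asymptotic curvature condition $R_{g_i} + \sup_{M_i\setminus K_i}\left|\mathrm{Rm}_{g_i}\right|\leq R_0$ can be arranged trivially (taking $K_i = M_i$) so all hypotheses of assumption $\left(\mathcal{A}\right)$ are in place. The bubble tree convergence theorem from \cite{by_bubbling} then upgrades this convergence to a bubble tree convergence with finitely many bubbles, the total count $T$ being bounded by the uniform entropy and energy bounds via energy quantization at the orbifold points.

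Second I would verify that $\left(M_\infty,g_\infty,f_\infty\right)$ itself lies in $\mathcal{M}^n\left(\underline{\mu},E,I\right)$. The normalization of $f_\infty$ is inherited from the orbifold Cheeger--Gromov convergence, and the entropy lower bound $\mu\left(g_\infty\right) \geq \underline{\mu}$ follows from the standard lower semi-continuity of Perelman's entropy. The energy bound $\int_{M_\infty}\left|\mathrm{Rm}_{g_\infty}\right|^{n/2}\,dV_{g_\infty} \leq E$ is a direct consequence of the lower semi-continuity of the energy (noting that concentration at the orbifold points can only decrease the orbifold energy relative to the sequence). The index bound $\mathrm{Ind}_{f_\infty}\left(M_\infty\right) \leq I$ is the crucial step: it is obtained from \eqref{lower} of Theorem \ref{semi_continuity_thm} after discarding the nonnegative contributions $\mathrm{Ind}\left(V^k\right)$ from the bubbles, yielding $\mathrm{Ind}_{f_\infty}\left(M_\infty\right) \leq \liminf_{i\to\infty}\mathrm{Ind}_{f_i}\left(M_i\right) \leq I$.

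Finally, for the finiteness of diffeomorphism types I would invoke Corollary $1.5$ of \cite{by_bubbling}, which describes each $M_i$ as diffeomorphic to a smoothing of $M_\infty$ with the bubbles reattached by connected sum at the orbifold points. Since the uniform energy and entropy bounds limit both the number of orbifold points and the total number of bubbles, and since each bubble is itself one of only finitely many diffeomorphism types (again by Theorem \ref{hm_compactness_thm} applied to each point-scale sequence and by the same energy quantization), the diffeomorphism type of any $M_i$ is determined up to finitely many choices by data depending only on $n,\underline{\mu},E,I$. The main obstacle in assembling this argument is ensuring that the index contribution of each bubble really is nonnegative so that \eqref{lower} gives a clean bound on $\mathrm{Ind}_{f_\infty}\left(M_\infty\right)$, and that the lower semi-continuity of the energy and the preservation of the entropy bound hold across orbifold Cheeger--Gromov convergence with bubbling; both follow from the framework already established earlier in the paper and in \cite{by_bubbling}, so the corollary essentially amounts to orchestrating these ingredients.
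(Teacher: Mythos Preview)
Your proposal is correct and follows essentially the same route as the paper, which simply cites Theorem~1.4 of \cite{mw_conical} (diameter bound so the limit is closed), Theorem~\ref{semi_continuity_thm} (specifically \eqref{lower}), lower semi-continuity of the energy, and Corollary~1.5 of \cite{by_bubbling} as the ingredients. One small point of phrasing: you write that you will verify $\left(M_\infty,g_\infty,f_\infty\right)$ lies in $\mathcal{M}^n\left(\underline{\mu},E,I\right)$, but in general $M_\infty$ is an \emph{orbifold} shrinker, not a smooth one, so it need not belong to the class as literally defined; ``compact in the orbifold Cheeger--Gromov sense'' here means sequentially compact with orbifold limits allowed, and the bounds you derive on $\mu\left(g_\infty\right)$, the energy, and $\mathrm{Ind}_{f_\infty}\left(M_\infty\right)$ are still needed (and correct) to iterate the argument and get quantitative control on the number of diffeomorphism types.
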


  As far as the author knows, Theorem \ref{semi_continuity_thm} and Corollary \ref{cmpct_corollary} are new even for positive Einstein manifolds, which are special cases of Ricci shrinkers. With a little more work one can incorporate the Betti numbers of $M$ into Corollary \ref{cmpct_corollary}, but we omit this to emphasize the index bound.

  Finally, we can bound the Morse index of an asymptotically conical Ricci shrinker below by the f-index of its asymptotic cone under a technical condition. By a cone we mean a manifold $\mathcal{C}\left(\Sigma\right) := \left[0,\infty\right) \times \Sigma$ endowed with the metric $g_{\mathcal{C}} := dr^2 + r^2 g_\Sigma$, where the cone link $\left(\Sigma, g_\Sigma\right)$ is a closed $\left(n-1\right)$ dimensional Riemannian manifold. Next, define $E_R := \left(R,\infty\right) \times \Sigma$ for $R \geq 0$. Further let $\mathcal{E}$ be an end of a manifold $M$ and $\rho_\lambda: E_0 \rightarrow E_0$ be a dialtion map given by $\rho_\lambda\left(r,x\right) := \left(\lambda r, x\right)$ for every $x \in \Sigma$ and any $\lambda > 0$. 

  \begin{definition}\label{asymp_con_defn}
    We say an end $\mathcal{E}$ of a manifold $\left(M,g\right)$ is conical in the $C^k$-sense if, along $\mathcal{E}$ and for some $R > 0$, there is a diffeomorphism $\Phi: E_R \rightarrow \mathcal{E}$ such that $\lambda^{-2}\rho^\ast_\lambda \Phi^\ast g \rightarrow g_{\mathcal{C}}$ in the $C^k_{\mathrm{loc}}$-sense as $\lambda \rightarrow \infty$. We say a manifold is \textit{asymptotically conical} if each end is conical.
  \end{definition}

  Importantly for us, one can show the limit in Definition \ref{asymp_con_defn} coincides with the $t = 0$ time slice of the Ricci flow induced by an asymptotically conical gradient shrinker.

  \begin{theorem}\label{cone_lower_bound}
    \sloppy For $n \geq 4$, let $\left(M,g,f\right)$ be an $n$-dimensional complete connected gradient Ricci shrinker with finitely many ends which is asymptotically conical to the cone $\left(\mathcal{C}\left(\Sigma\right), g_{\mathcal{C}} := dr^2 + r^2 g_\Sigma\right)$ with vertex $p_{\mathcal{C}}$, where the cone link $\left(\Sigma, g_\Sigma\right)$ is an $\left(n-1\right)$-dimensional closed manifold. Then there is a continuous function $f_{\mathcal{C}}: \mathcal{C}\left(\Sigma\right) \rightarrow \mathbb{R}$ so that if $L_{f_{\mathcal{C}}}: L^2_{f_{\mathcal{C}}}\left(\mathcal{C}\left(\Sigma\right)\right) \rightarrow L^2_{f_{\mathcal{C}}}\left(\mathcal{C}\left(\Sigma\right)\right)$ is upper semi-bounded we have

    \begin{equation*}
      \mathrm{Ind}_{f_\mathcal{C}}\left(\mathcal{C}\left(\Sigma\right)\right) \leq \mathrm{Ind}_f\left(M\right).    
    \end{equation*}
  \end{theorem}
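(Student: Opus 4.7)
The plan is to construct, for each $k \leq \mathrm{Ind}_{f_\mathcal{C}}(\mathcal{C}(\Sigma))$, a $k$-dimensional subspace of smooth compactly supported symmetric $2$-tensors on $M$ on which the weighted quadratic form associated to $L_f$ is negative definite. I would take $f_\mathcal{C}(r, x) := r^2/4$ as the cone potential; this is the natural choice in view of the Cao--Zhou asymptotic $f = d(p,\cdot)^2/4 + O(d(p,\cdot))$, and combined with the metric convergence of Definition \ref{asymp_con_defn} one checks that $\lambda^{-2}(\Phi \circ \rho_\lambda)^* f \to f_\mathcal{C}$ in $C^k_{\mathrm{loc}}$ on $\mathcal{C}(\Sigma) \setminus \{p_\mathcal{C}\}$ as $\lambda \to \infty$. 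Under the upper semi-boundedness hypothesis, the Friedrichs extension of $L_{f_\mathcal{C}}$ on $L^2_{f_\mathcal{C}}(\mathcal{C}(\Sigma))$ has discrete spectrum below its essential spectrum, so the $f_\mathcal{C}$-index admits a min-max characterization as the maximum dimension of a subspace of the form domain on which $\mathcal{Q}_\mathcal{C}(u) := -\langle L_{f_\mathcal{C}} u, u\rangle_{f_\mathcal{C}}$ is strictly negative.

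Setting $k := \mathrm{Ind}_{f_\mathcal{C}}(\mathcal{C}(\Sigma))$ (assumed finite; otherwise run the argument on arbitrary finite truncations), I would take a realizing $k$-dimensional subspace $V$ with $\mathcal{Q}_\mathcal{C}(u) \leq -\delta \|u\|_{f_\mathcal{C}}^2$ for some $\delta > 0$, and approximate a basis of $V$ by smooth symmetric $2$-tensors $u_1, \ldots, u_k$ compactly supported in $\mathcal{C}(\Sigma) \setminus \{p_\mathcal{C}\}$, still spanning a $k$-dimensional subspace on which $\mathcal{Q}_\mathcal{C} \leq -(\delta/2) \|\cdot\|_{f_\mathcal{C}}^2$; the density required here follows from the vanishing $2$-capacity of the isolated vertex when $n \geq 4$. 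Let $K \subset \mathcal{C}(\Sigma) \setminus \{p_\mathcal{C}\}$ be a common compact support of the $u_j$. For $\lambda$ large enough that $K \subset E_{R/\lambda}$, I define $v_j^\lambda := (\Phi \circ \rho_\lambda)_* u_j$, extended by zero to $M$; these are smooth compactly supported tensors on $\mathcal{E} \subset M$. By diffeomorphism invariance of the weighted quadratic form, the $L_f$-Rayleigh quotient of $v_j^\lambda$ on $(M, g, f)$ equals the Rayleigh quotient of $u_j$ on $\mathcal{C}(\Sigma)$ computed with the pulled-back data $(g_\lambda, f_\lambda) := ((\Phi \circ \rho_\lambda)^* g, (\Phi \circ \rho_\lambda)^* f)$. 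Combining $\lambda^{-2}g_\lambda \to g_\mathcal{C}$ and $\lambda^{-2}f_\lambda \to f_\mathcal{C}$ in $C^k_{\mathrm{loc}}$ with a careful scaling analysis of how the Dirichlet form, the curvature term $\langle \mathrm{Rm} \ast u, u\rangle$, the metric norm on symmetric $2$-tensors, and the weighted volume form transform under constant conformal rescalings of metric and potential, one shows that, after the appropriate $\lambda$-dependent normalization, the pulled-back quadratic form converges uniformly on $\mathrm{span}(u_1, \ldots, u_k)$ to $\mathcal{Q}_\mathcal{C}$. In particular, for $\lambda$ sufficiently large the $v_1^\lambda, \ldots, v_k^\lambda$ are linearly independent and the $L_f$-quadratic form is negative-definite on their span, yielding $\mathrm{Ind}_f(M) \geq k$ by the variational characterization of the Morse index from Section \ref{grs_functional}.

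The main obstacle is the scaling analysis in the transplantation step. The weights $e^{-f_\lambda} \approx e^{-\lambda^2 f_\mathcal{C}}$ concentrate sharply on compact subsets of $\mathcal{C}(\Sigma) \setminus \{p_\mathcal{C}\}$ as $\lambda \to \infty$; although this concentration appears symmetrically in numerator and denominator of the Rayleigh quotient, verifying that the ratio of weighted integrals converges to the cone Rayleigh quotient (up to the expected power of $\lambda$) requires careful bookkeeping. The key observation is that on a fixed compact support, the weight effectively acts as a common multiplicative factor in the limit, so the ratio is asymptotically governed by the unweighted integrands. A secondary subtlety is the density of compactly supported tensors away from the vertex in $W^{1,2}_{f_\mathcal{C}}$, which rests on the vanishing capacity of an isolated point in dimension $n \geq 4$.
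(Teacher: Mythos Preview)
Your transplantation strategy is natural and broadly parallels the paper's, but the resolution you offer for the ``main obstacle'' does not work, and this is a genuine gap. On a compact set $K \subset \mathcal{C}(\Sigma)\setminus\{p_\mathcal{C}\}$ spanning radii $[a,b]$ with $a<b$, the potential $f_\mathcal{C} = r^2/4$ is not constant, so the weight $e^{-f_\lambda} \approx e^{-\lambda^2 r^2/4}$ does \emph{not} behave like a common multiplicative factor: its values on $K$ range between $e^{-\lambda^2 b^2/4}$ and $e^{-\lambda^2 a^2/4}$, whose ratio diverges as $\lambda\to\infty$. By Laplace's method the weighted integrals concentrate on the inner radial edge of $\mathrm{supp}(u)$, so neither the Rayleigh quotient of $v^\lambda_j$ on $(M,g,f)$ nor even the sign of $\mathcal{B}_f[v^\lambda_j,v^\lambda_j]$ is governed by the cone quadratic form $\mathcal{Q}_\mathcal{C}(u_j)$. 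The integrand $|\nabla u|^2 - 2\langle\mathrm{Rm}\ast u,u\rangle$ need not be pointwise negative, and after multiplication by $e^{-\lambda^2 f_\mathcal{C}}$ only its values near the smallest radii in the support survive; there is no mechanism forcing negativity there.

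The paper sidesteps this by not pushing test tensors out along the end with the fixed potential $f$, but instead passing to the simultaneously rescaled data $\bigl(g(t_i),\, -t_i f(t_i)\bigr)$ along the self-similar flow as $t_i\nearrow 0$. Both metric and potential then converge in $C^\infty_{\mathrm{loc}}$ away from the vertex to $(g_\mathcal{C},f_\mathcal{C})$, so the weight $e^{-(-t_i f(t_i))}\to e^{-f_\mathcal{C}}$ honestly and no Laplace concentration occurs. The cost is that one must justify $\mathrm{Ind}_{-t f(t)}(M,g(t)) = \mathrm{Ind}_f(M,g)$ for all $t<0$; the paper attributes this to self-similarity together with the weighted/unweighted index equivalence of Proposition~\ref{body_equivalence_result} (and the second part of Lemma~\ref{rm_wgt_ineq}, which holds on the cone as noted in Remark~\ref{bubble_adjustment}). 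From there the argument is a contradiction in the style of Barbosa--Sharp--Wei: assuming $\mathrm{Ind}_{f_\mathcal{C}}(\mathcal{C}(\Sigma))\ge I+1$, one produces $I+1$ orthonormal compactly supported test tensors on the cone (via Lemma~\ref{finite_index_capture}), transplants their approximants back to the converging sequence of shrinkers, and derives a contradiction from the forced linear dependence there. Your choice $f_\mathcal{C}=r^2/4$ is correct and matches the paper's.
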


  The intuition behind Theorem \ref{cone_lower_bound} is if one takes the blow-down limit of the shrinker $M$ (or flows the shrinker until it becomes singular) there should be a loss of topology, which should cause the resulting limiting space to be more stable. The assumption that $L_{f_{\mathcal{C}}}: L^2_{f_{\mathcal{C}}}\left(\mathcal{C}\left(\Sigma\right)\right) \rightarrow L^2_{f_{\mathcal{C}}}\left(\mathcal{C}\left(\Sigma\right)\right)$ is semi-bounded above is a technical condition. We comment more on this after the proof of Theorem \ref{cone_lower_bound}.

  Our strategy to prove the upper semi-continuity estimate in Theorem \ref{semi_continuity_thm} closely follows Workman's in \cite{workman_semicontinuity}, which in turn is a simplified version of the analysis in \cite{dlgr_upper_semicontinuity}. This simplification stems from the availability of $L^2$-Sobolev inequalities in dimensions $n \geq 3$. The main insight from \cite{dlgr_upper_semicontinuity} involved working with a \emph{weighted} eigenvalue problem. For us, this causes the eigenvalue problem to become scale invariant so it is preserved, in some sense, when one blows-up around an orbifold point. On the other hand, this approach comes with its own difficulties, as the weight function we introduce blows-up near orbifold points. To handle this, we will work with Lorentz spaces, a certain type of weak $L^p$-space. The proof then proceeds by studying the weighted problem on each bubble along the lines of the construction in \cite{by_bubbling} and then proving a spectral version of the ``neck theorem'' proved in Theorem $3.4$ of \cite{by_bubbling}. This latter part shows small annuli around each orbifold point cannot contribute any index or nullity. Finally, we show that the weighted and unweighted indexes and nullities coincide. As for the lower semi-continuity results, the proofs rely on cut-off functions and perturbation arguments, as well as the analysis in earlier sections.

  It is also worth noting that results analogous to Theorem \ref{semi_continuity_thm} have been proved in other places, for instance \cite{dalio_gianocca,gauvrit_laurain,hirsch_lamm, michelat,michelat_riviere}. Many of these papers adapt the strategy of \cite{dlgr_upper_semicontinuity}, while \cite{hirsch_lamm} uses a different method of proof and is of independent interest.

  The close resemblence of our proofs to those in \cite{workman_semicontinuity} speaks to the strength and robustness of the method, which can hopefully be used in other geometric settings. Aside from the scaling of the problem, the strategy hinges on the body and bubbles having Euclidean volume growth and admitting Euclidean type Sobolev inequalities, at least at appopriate scales. In this paper, we are able to show the analysis works if the stability operator acts on sections of some vector bundle and if the manifolds under consideration are non-compact, but come with a natural notion of finite weighted volume. We also refine the methods in \cite{workman_semicontinuity}, allowing us to show the weighted and unweighted nullities on the bubbles coincide. This is accomplished by appealing to results concerning operators acting as maps between Sobolev spaces weighted by some power of the distance function. We also outline how this can be adapted to the sequences of CMC hypersurfaces considered in \cite{workman_semicontinuity}.

  This paper is organized as follows. In Section \ref{preliminaries} we present some reults about Ricci shrinkers and then give an overview of orbifold convergence and the bubble tree construction from \cite{by_bubbling}. Section \ref{functional} is devoted to presenting various functional analytic and spectral properties of the stability operators we consider. In particular, this will tell us that the nullity and index (on the shrinkers and bubbles) are finite when working in appropriate (weighted) Sobolev spaces. In Section \ref{upper_semi_cont}, the upper semi-continuity estimate is proved after introducing Lorentz spaces and formulating the weighted eigenvalue problem. Finally, we prove the lower semi-continuity estimate and Theorem \ref{cone_lower_bound} in Section \ref{lower_proof}.\\

  \textbf{Acknowledgements.} The author is grateful to Myles Workman for various discussions about his article \cite{workman_semicontinuity}, which were of great aid while working on this paper. The author also thanks Klaus Kr\"oncke for many helpful conversations.
  
\section{Preliminaries}\label{preliminaries}
    \subsection{Some Identities and Results for Gradient Shrinkers}\label{shrinker_prelims}
        We now recall some important identities and results which hold on a gradient Ricci shrinker $\left(M,g,f\right)$. We will use $p$ to denote a minimum of the potential function $f$ throughout this section and the rest of the paper. The following are consequences of \eqref{grs_eqn}:   

        \begin{align}
            R_g + \Delta f &= \frac{n}{2} \label{grs_traced}\\
            R_g + \left|\nabla f\right|^2 &= f - \mu\left(g\right). \label{grs_auxiliary}
        \end{align}

        Taking the trace of \eqref{grs_eqn} yields \eqref{grs_traced}, while a proof of \eqref{grs_auxiliary} can be found in Chapter 1 of \cite{rf_tech_appsI}. Moreover, the scalar curvature of gradient shrinkers is always non-negative: $R_g \geq 0$. See \cite{grs_scal_curv} for a proof using a maximum principle argument. In fact, $R_g > 0$ unless the shrinker is flat, which follows from considering the evolution equation for $R_g$ on a shrinker. One can also show the potential function $f$ grows at most quadratically:

        \begin{equation}\label{potential_growth}
            \frac{1}{4}\left(d\left(x,p\right)-5n\right)^2_+ \leq f\left(x\right) - \mu\left(g\right) \leq \frac{1}{4}\left(d\left(x,p\right)+\sqrt{2n}\right)^2.
        \end{equation}

        For a proof, we refer the reader to Lemma $2.1$ in \cite{HM1}. Together with $R_g > 0$ and \eqref{grs_auxiliary} one obtains $\inf_M f - \mu\left(g\right) > 0$. We can then normalize the following weighted volume, possibly after modifying $f$ by a constant:

        \begin{equation}\label{grs_normalization}
            \int_M \left(4\pi\right)^{-\frac{n}{2}}e^{-f}dV_g = 1.
        \end{equation}

        Moving forward, we will implicitly assume our shrinkers are normalized so that \eqref{grs_normalization} always holds. The natural second order self-adjoint elliptic operator associated to this weighted volume measure is the drift Laplacian $\Delta_f$. Moreover, \eqref{grs_normalization} gives the space of Ricci shrinkers a notion of unit volume. Since gradient Ricci shrinkers are a generalization of positive Einstein manifolds, one could then expect that Theorem \ref{hm_compactness_thm} and the results in \cite{by_bubbling} are true, since analogous statements are known to hold for Einstein manifolds. On the other hand, the normalization \eqref{grs_normalization} has the added benefit of ensuring certain weighted Sobolev spaces obey compact embedding relations as if $M$ was a closed manifold. See Section \ref{grs_functional} for more details and the precise results.

        This all allows us to ensure Perelman's $\mathcal{W}$-entropy

        \begin{equation*}
            \mathcal{W}\left(g,f,\tau\right) := \left(4\pi \tau\right)^{-\frac{n}{2}}\int_M \left(\tau \left(\left|\nabla f\right|^2 + R_g\right) + f - n\right) e^{-f} dV_g
        \end{equation*}

        is well-defined on a shrinker. This uses $R_g > 0$, \eqref{grs_auxiliary}, and \eqref{potential_growth} to deduce that $f,\left|\nabla f\right|^2$, and $R_g$ grow at most quadratically. A lower bound follows from the non-negativity of the quantities just discussed as well as the normalization \eqref{grs_normalization}. The entropy is then defined as

        \begin{equation}\label{nu_entropy}
            \mu\left(g\right) := \inf \left\{ \mathcal{W}\left(g,f,\tau\right) : \tau > 0, f \in C^\infty\left(M\right)~\mathrm{such~that}~\left(4\pi \tau\right)^{-\frac{n}{2}}\int_M e^{-f}dV_g = 1\right\}.
        \end{equation}

        By work of Carillo--Ni (\cite{carillo_ni}), the infimum is always achieved when $g$ is a shrinker metric. The entropies are invariant under rescalings and pulling back by a diffeomorphism. For further properties of the entropy on shrinkers we refer the reader to work of Li--Wang (Section $4$ in \cite{heat_kernel_grs} in particular).

        Furthermore, one can prove a Euclidean upper bound for the volume of balls centered at $p$ (Lemma $2.2$ in \cite{HM1}):

        \begin{equation}\label{grs_upper_volume}
            \mathrm{Vol}_g\left(B_g\left(p,r\right)\right) \leq V_0 r^n, \quad \forall r>0
        \end{equation}

        where $V_0 > 0$ is a constant depending only on $n$. A local volume non-collapsing result also holds when $\mu\left(g\right) \geq \underline{\mu} > -\infty$ (Lemma $2.3$ in \cite{HM1}):

        \begin{equation}\label{grs_lower_volume}
            \mathrm{Vol}_g\left(B_g\left(q,\delta\right)\right) \geq v_0 \delta^n,
        \end{equation}

        for any $B_g\left(q,\delta\right)\subset B_g\left(p,r\right)$ with $0 < \delta \leq 1$ and $v_0 > 0$ a constant depending only on $n, \underline{\mu}, r$.

        Finally, we have the following $\varepsilon$-regularity result (Lemma $3.3$ in \cite{HM1}) which will be useful later on:

        \begin{lemma}\label{hm_eps_reg}
          Let $\left(M,g,f\right)$ be a gradient Ricci shrinker with normalization \eqref{grs_normalization} and $p$ a minimum of the potential function $f$. Then for every $\ell \in \mathbb{N}$ and $r > 0$, there is an $\varepsilon_{\mathrm{reg}} = \varepsilon_{\mathrm{reg}}\left(r,n,\underline{\mu}\right) > 0$, a $K = K\left(\ell, r, n, \underline{\mu}\right)$, and a $\delta_0 = \delta_0\left(r,n,\underline{\mu}\right) > 0$ such that for every $B_g\left(x,\delta\right) \subset B_g\left(p,r\right)$ with $\delta \in \left(0,\delta_0\right]$ we have the implication

          \begin{equation*}
            \left|\left|\mathrm{Rm}_g\right|\right|_{L^{\frac{n}{2}}\left(B_g\left(x,\delta\right)\right)} \leq \varepsilon_{\mathrm{reg}} \implies \sup_{B_g\left(x,\frac{\delta}{4}\right)} \left|\nabla^\ell \mathrm{Rm}_g\right| \leq \frac{K}{\delta^{2+\ell}} \left|\left|\mathrm{Rm}_g\right|\right|_{L^{\frac{n}{2}}\left(B_g\left(x,\delta\right)\right)}.
          \end{equation*}
        \end{lemma}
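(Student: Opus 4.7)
The plan is to combine a local Sobolev inequality (derived from the entropy lower bound) with a Moser iteration applied to an elliptic differential inequality that $|\mathrm{Rm}_g|$ satisfies on a gradient shrinker. First I would exploit $\mu(g) \geq \underline{\mu}$, via Perelman's log-Sobolev/no-local-collapsing inequality together with the upper volume bound \eqref{grs_upper_volume} and the non-collapsing \eqref{grs_lower_volume}, to produce a uniform $L^2$-Sobolev inequality on every geodesic ball $B_g(x,\delta) \subset B_g(p,r)$ with Sobolev constant depending only on $n$, $\underline{\mu}$, and $r$. The threshold $\delta_0 = \delta_0(r,n,\underline{\mu})$ is chosen small enough that such balls sit inside a regime where cutoff functions interact cleanly with this Sobolev inequality and where the weight $e^{-f}$ is comparable to a constant (on $B_g(p,r)$ the quadratic growth \eqref{potential_growth} bounds $f$ in terms of $r$ and $\mu(g)$).

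Next I would derive the elliptic PDE for the curvature on a shrinker. Because the shrinker induces the self-similar Ricci flow $g(t) = -t\,\varphi_t^\ast g(-1)$, the standard evolution $\partial_t \mathrm{Rm} = \Delta\mathrm{Rm} + \mathrm{Rm}\ast\mathrm{Rm}$ translates, after using \eqref{grs_eqn} to replace time derivatives by a Lie derivative along $\nabla f$ plus a scaling term, into the schematic elliptic identity $\Delta_f \mathrm{Rm}_g = \mathrm{Rm}_g - 2\,\mathrm{Rm}_g \ast \mathrm{Rm}_g$. Kato's inequality then yields a pointwise differential inequality of the form
\begin{equation*}
  \Delta_f |\mathrm{Rm}_g|^2 \geq 2|\nabla \mathrm{Rm}_g|^2 + |\mathrm{Rm}_g|^2 - C(n) |\mathrm{Rm}_g|^3.
\end{equation*}

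The heart of the argument is a Moser iteration on this inequality. Multiplying by $|\mathrm{Rm}_g|^{p-2}\eta^2$ for a suitable cutoff $\eta$ supported in $B_g(x,\delta)$, integrating by parts (against either the unweighted or weighted measure, which are comparable on $B_g(p,r)$), and applying the Sobolev inequality produced in the first step gives a reverse Hölder inequality for $|\mathrm{Rm}_g|^p$ on nested balls. The cubic nonlinearity is handled by Hölder's inequality, which introduces the factor $\|\mathrm{Rm}_g\|_{L^{n/2}(B_g(x,\delta))}$; picking $\varepsilon_{\mathrm{reg}} = \varepsilon_{\mathrm{reg}}(n,\underline{\mu},r)$ small enough lets us absorb this term into the good $|\nabla|\mathrm{Rm}_g||^2$ term. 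Standard iteration of the resulting reverse Hölder inequality over a dyadic family of balls in $B_g(x,\delta)$ produces the case $\ell = 0$ with the claimed $\delta^{-2}$ scaling. For $\ell \geq 1$, I would bootstrap using Shi-type local derivative estimates applied to the induced self-similar flow (or equivalently, apply Schauder estimates to the shrinker equation in harmonic coordinates, in which \eqref{grs_eqn} becomes an elliptic system for $g$ and $f$), successively gaining one derivative; parabolic rescaling accounts for the $\delta^{-(2+\ell)}$ factor.

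The main obstacle lies in making the constants $\varepsilon_{\mathrm{reg}}$ and $K$ depend only on the specified data and not on the particular shrinker. This uniformity is secured entirely by the entropy lower bound: $\underline{\mu}$ controls the Sobolev constant via Perelman's inequality on $B_g(p,r)$, and the restriction $B_g(x,\delta) \subset B_g(p,r)$ with $\delta \leq \delta_0$ keeps us in a region where the potential $f$, the weight $e^{-f}$, and the volume comparison are all controlled in terms of $r$, $n$, and $\underline{\mu}$. Since the starting PDI is dimensionally subcritical exactly at the exponent $n/2$, this choice of threshold and the Moser scheme close consistently.
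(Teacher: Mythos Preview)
The paper does not supply its own proof of this lemma; it is quoted verbatim as Lemma~3.3 of Haslhofer--M\"uller \cite{HM1}. Your outline---first extract a uniform local $L^2$-Sobolev inequality on small balls inside $B_g(p,r)$ from the entropy lower bound (this is exactly Lemma~3.2 of \cite{HM1}, also invoked in the paper's proof of Proposition~\ref{lorentz_sobolev}), then derive the schematic elliptic inequality $\Delta_f |\mathrm{Rm}_g| \geq -c(n)|\mathrm{Rm}_g|^2$ from the shrinker equation, run Moser iteration with the $L^{n/2}$ smallness to absorb the nonlinearity, and finally bootstrap to higher derivatives via Shi-type estimates on the self-similar flow---is precisely the argument carried out in \cite{HM1}. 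Your proposal is correct and coincides with the original source.
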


        We note in particular that, when considering a sequence of gradient shrinkers, as long as one fixes a uniform radius $r$, then the constants $\varepsilon_{\mathrm{reg}}, K, \delta_0$ will all be uniform along the sequence. We also note here that it is possible to prove a similar result without $\varepsilon_{\mathrm{reg}}$ depending on $r$. See, for instance, Theorem $1.8$ in \cite{eps_reg_wang_wang} and the associated proof.

    \subsection{Bubble Tree Convergence}\label{bubble_tree_prelims}
        In future sections the analytic aspects might obscure a lot of the geometry, so we now take the opportunity to outline the bubble tree construction from Section $5$ of \cite{by_bubbling}. Moreover, Theorem \ref{semi_continuity_thm} will be proved by analyzing a weighted eigenvalue problem at each step of the construction, so it is worth taking some time to familiarize ourselves with the procedure. We also note that bubble tree convergence is known in a variety of other geometric settings and we refer the reader to the references in \cite{by_bubbling} and Section \ref{introduction} for more information.

        We now recall what is meant by (pointed) Gromov--Hausdorff and (pointed) orbifold Cheeger--Gromov convergence. 

        \begin{definition}\label{GHA}
            A pointed map $f: \left(X,p\right) \to \left(Y,q\right)$ between two metric spaces $\left(X, d_X, p\right)$, $\left(Y,d_Y,q\right)$ is an \emph{$\varepsilon$-pointed Gromov--Hausdorff approximation} ($\varepsilon$-PGHA) if it is almost an isometry and almost onto in the following sense:

            \begin{enumerate}
                \item $\left|d_X\left(x_1, x_2\right) - d_Y\left(f\left(x_1\right), f\left(x_2\right)\right)\right| \leq \varepsilon$, for all $x_1,x_2 \in B_{d_X}\left(p,\frac{1}{\varepsilon}\right)$,
                \item For all $y \in B_{d_Y}\left(q,\frac{1}{\varepsilon}\right)$ there exists $x\in B_{d_X}\left(p,\frac{1}{\varepsilon}\right)$ with $d_Y\left(y,f\left(x\right)\right)\leq \varepsilon$.
            \end{enumerate}

            We say $\left(X_i,p_i\right) \to \left(Y,q\right)$ as $i\to\infty$ in the \emph{pointed Gromov--Hausdorff sense} if, as $i \rightarrow \infty$,

            \begin{equation*}
                \inf \left\{\varepsilon > 0 : \exists \text{ $\varepsilon$-pGHA } f_1:\left(X_i,p_i\right) \to \left(Y,q\right) \text{ and } f_2:\left(Y,q\right) \to \left(X_i,p_i\right)\right\} \to 0.
            \end{equation*}
        \end{definition}

        One can think of Gromov--Hausdorff convergence as a geometric notion of $C^0$-convergence. In particular, it allows one to still make some sense of sequences which have singular limits, such as an orbifold shrinker, which we now define.

        \begin{definition}\label{defn_orb_grs}
            A complete metric space $M_\infty$ is called an \emph{orbifold Ricci shrinker} if it is a smooth Ricci shrinker away from a locally finite set $\mathcal{Q}$ of singular points and at every $q \in \mathcal{Q}$, $M_\infty$ is modeled on $\mathbb{R}^n/\Gamma$ for some finite group $\Gamma \subset O\left(n\right)$. Moreover, there exists an associated covering $\mathbb{R}^n \supset B\left(0,\varrho\right)\backslash \left\{0\right\} \stackrel{\psi}{\to} U \backslash \left\{q\right\}$ of some neighborhood $U\subset M_\infty$ of $q$ such that $\left(\psi^\ast g_\infty,\psi^\ast f_\infty\right)$ can be smoothly extended to a gradient shrinker over the origin.
        \end{definition}

        Now we can introduce the notion of orbifold Cheeger--Gromov convergence, which can be thought of as a geometric version of almost everywhere $C^\infty$-convergence and $C^0$-convergence everywhere. If no orbifold points form in the limit then this reduces to the usual definition of pointed smooth Cheeger--Gromov convergence.

        \begin{definition}\label{defn_orb_grs_conv}
            A sequence of gradient shrinkers $\left(M_i, g_i, f_i, p_i\right)$ converges to an orbifold gradient shrinker $\left(M_\infty, g_\infty, f_\infty, p_\infty\right)$ in the \emph{pointed orbifold Cheeger--Gromov sense} if the following properties hold:

            \begin{enumerate}
                \item There exist a locally finite set $\mathcal{Q} \subset M_\infty$, an exhaustion of $M_\infty \backslash \mathcal{Q}$ by open sets $U_i$, and smooth embeddings $\varphi_i: U_i \to M_i$ such that $\left(\varphi^\ast_i g_i, \varphi^\ast_i f_i\right)$ converges to $\left(g_\infty, f_\infty\right)$ in the $C^\infty_{\mathrm{loc}}$-sense on $M_\infty \backslash \mathcal{Q}$.
                \item Each of the above maps $\varphi_i$ can be extended to an $\varepsilon$-pGHA which yields a convergent sequence $\left(M_i, d_i, p_i\right) \to \left(M_\infty, d_\infty, p_\infty\right)$ in the pointed Gromov--Hausdorff sense.
            \end{enumerate}
        \end{definition}

        For a more detailed discussion and how one can prove such convergence for gradient shrinkers, we refer the reader to Sections $2-3$ of \cite{HM1}. If one disregards the points about the potential function, Definition \ref{defn_orb_grs} and Definition \ref{defn_orb_grs_conv} carry over to a general sequence of manifolds converging to an orbifold. It is also worth noting that, while $\mathcal{Q}$ consists of singular points, they are relatively well-behaved (or ``mild'') since the existence of the covering map $\psi$ from Definition \ref{defn_orb_grs} allows one to locally work in a smooth covering space and then project back to the orbifold. This allows for the definition of H\"older and Sobolev spaces on orbifolds using the covering map $\psi$ and a partition of unity to construct ``orbifold charts". Note this also yields a way to formulate integration on orbifolds. We refer the reader to \cite{farsi} and the references therein for more information.

        \begin{remark}\label{orbifold_remark}
            The orbifolds we will encounter are ``orbifolds with bounded curvature'' due to our assumed uniform curvature bound on $M_i \backslash K_i$ and Step $3$ of the proof of Theorem \ref{hm_compactness_thm} in \cite{HM1}. In particular, each point on the orbifold $M_\infty$ can be locally covered by a manifold with bounded curvature. This makes applying PDE techniques in later sections much easier in principle when we work directly on the orbifold, rather than the sequence of shrinkers converging to it. However, we will try to avoid using the precise orbifold structure as much as possible and endeavor to treat the orbifold points as general conical singularities, which will hopefully be of future use when dealing with such structures.
        \end{remark}

        As a final bit of preparation before discussing the bubble tree construction, we provide the definition of an ALE bubble:

        \begin{definition}\label{ale_bubble}
            An $n$-dimensional manifold (or an orbifold with finitely many orbifold points) $\left(V, h\right)$ with one end is \emph{asymptotically locally Euclidean (ALE) of order $\tau > 0$} if there is a compact set $K \subset V$, a constant $R > 0$, a finite group $\Gamma \subset O\left(n\right)$ acting freely on $\mathbb{R}^n \backslash B\left(0,R\right)$, as well as a smooth diffeomorphism $\psi: V \backslash K \to \left(\mathbb{R}^n \backslash B\left(0,R\right)\right)/\Gamma$ such that

            \begin{align*}
                \left(\varphi^\ast h\right)_{ij}\left(x\right) &= \delta_{ij} + O\left(\left|x\right|^{-\tau}\right),\\
                \partial^k \left(\varphi^\ast h\right)_{ij}\left(x\right) &= O \left(\left|x\right|^{-\tau-k}\right)
            \end{align*}

            for all $k \geq 1$ and $x,y \in \mathbb{R}^n \backslash B\left(0,R\right)$. Here $\varphi := \psi^{-1} \circ \pi$ where $\pi: \mathbb{R}^n \to \mathbb{R}^n/\Gamma$ is the natural projection. We say that an $n$-dimensional manifold (or orbifold with finitely many orbifold points) is an \emph{ALE bubble} if it is complete and non-compact with one end, Ricci-flat, non-flat with bounded $L^{\frac{n}{2}}$-Riemannian curvature, and ALE of order $n$.
        \end{definition}

        Note that this differs slightly from the definition in \cite{by_bubbling}. This used an older work on coordinates at infinity for ALE bubbles. In this paper, we use a more recent result due to Kr\"oncke--Szab\'o in \cite{kroncke_szabo} which tells us the bubbles are of order $n$ in general.

        Now for the bubble tree construction. This is all encapsulated in Theorem $1.2$ in \cite{by_bubbling} and the full construction can be found in Section $5$ of \cite{by_bubbling}. Consider a pointed sequence of shrinkers $\left(M_i,g_i,f_i,p_i\right)$ as in Theorem \ref{hm_compactness_thm} which converges in the pointed orbifold Cheeger--Gromov sense to an orbifold shrinker $\left(M_\infty, g_\infty, f_\infty, p_\infty\right)$ with singular set $\mathcal{Q}$. The convergence on $M_\infty \backslash \mathcal{Q}$, which we call the \textit{body region}, is locally smooth. Using Lemma \ref{hm_eps_reg}, one can show the orbifold points form due to $L^{\frac{n}{2}}$-curvature (``energy'') concentrating in small regions. Note that this also means orbifold point formation implies loss of energy in the limit and, by the Chern--Gauss--Bonnet theorem, a potential loss of topology. This characterization leads to, for each $q \in \mathcal{Q}$, a collection of \emph{point-scale sequences} $\left\{\left(q^k_i, s^k_i\right)\right\}^{N_q}_{k=1}$ such that, for every $k = 1, \dots, N_q$, 

        \begin{align*}
            s^1_i \leq s^2_i &\leq \dots \leq s^{N_q}_i,\\
            s^k_i &\rightarrow 0,\\
            q^k_i &\rightarrow q \in \mathcal{Q}.
        \end{align*}

        These point-scale sequences are determined by looking in small balls along the sequence which contain more than a critical amount of energy. This also means the scales $s^k_i$ represent how fast curvature is blowing up around $q^k_i$, making them a natural scaling factor to use in our blow-up analysis. We thus first consider the rescaled metric $\widetilde{g}^1_i := \left(s^1_i\right)^{-2} g_i$ and the associated point $q^1_i$ around which the energy coalesces the fastest. Then, by Theorem $2.6$ in \cite{by_bubbling}, we can pass to a subsequence to see that

        \begin{equation*}
            \left(M_i, \widetilde{g}^1_i, q^1_i\right) \rightarrow \left(V^1, h^1, q^1_\infty\right)
        \end{equation*}
    
        in the pointed orbifold Cheeger--Gromov sense. On the other hand, we blew-up around the fastest forming orbifold point, so no other orbifold points can be present on $V^1$. $V^1$ is thus a smooth manifold and the convergence above is actually in the smooth pointed Cheeger--Gromov sense. One can also show $V^1$ is non-flat, Ricci-flat, has a single end, and is ALE of order $n$. This follows from work in \cite{kroncke_szabo} and how various curvature quantities behave under rescalings of the metric. We call such smooth limits \emph{leaf bubbles}.

        On the other hand, multiple orbifold points could form and in such a case we consider the next scale $s^2_i$ and associated point $q^2_i$. We then try to repeat the procedure from before by considering $\widetilde{g}^2_i := \left(s^2_i\right)^{-2}g_i$ and using Theorem $2.6$ in \cite{by_bubbling} to conclude that, after passing to a subsequence,

        \begin{equation*}
            \left(M_i, \widetilde{g}^2_i, q^2_i\right) \rightarrow \left(V^2, h^2, q^2_\infty\right)
        \end{equation*}

        in the pointed orbifold Cheeger--Gromov sense. $V^2$ again has a single end and is non-flat, Ricci-flat, and ALE of order $n$, but we need to check whether or not $V^2$ is smooth. To do this, we first note that, similarly to Claim $5.2$ in \cite{by_bubbling}, one has

        \begin{equation}\label{rescaled_distance}
            \frac{s^1_i}{s^2_i} + \frac{s^2_i}{s^1_i} + \frac{d_{g_i}\left(q^1_i,q^2_i\right)}{s^2_i} \rightarrow \infty
        \end{equation}

        as $i \rightarrow \infty$, and likewise if one swaps the roles of the bubble scales. In general, this holds for any two distinct point-scale sequences $\left(q^k_i, s^k_i\right)$ and $\left(q^\ell_i, s^\ell_i\right)$. Importantly, \eqref{rescaled_distance} tells us one of the following occurs:

        \begin{align}
            \frac{d_{g_i}\left(q^1_i,q^2_i\right)}{s^2_i} &\rightarrow \infty \label{separable},\\
            \frac{d_{g_i}\left(q^1_i,q^2_i\right)}{s^2_i} &\leq M \label{not_separable}
        \end{align}

        for some $M \in \mathbb{R}_+$. We say the bubbles are \emph{separable} if \eqref{separable} still holds when $s^2_i$ is replaced by $s^1_i$. In general, two distinct bubbles associated to the point-scale sequences $\left(q^k_i, s^k_i\right)$ and $\left(q^\ell_i, s^\ell_i\right)$ are separable if, in the above discussion, after replacing $q^1_i,q^2_i$ with $q^k_i,q^\ell_i$ respectively, \eqref{separable} holds when replacing $s^1_i$ with $s^k_i$ and $s^2_i$ by $s^\ell_i$. Note also that if $\frac{s^k_i}{s^\ell_i} + \frac{s^\ell_i}{s^k_i}$ remains bounded then \eqref{rescaled_distance} tells us the bubbles $\left(V^k,h^k\right)$ and $\left(V^\ell, h^\ell\right)$ are separable.

        Due to how the distance function behaves under rescalings, \eqref{separable} implies $q^1_\infty \not\in V^2$, while \eqref{not_separable} tells us $q^1_\infty \in V^2$. The former means $\left(V^2, h^2\right)$ is smooth, hence a leaf bubble, while the latter implies $q^1_\infty$ is an orbifold point on $V^2$, since the distance between $q^1_i$ and $q^2_i$ remains bounded and $\frac{s^1_i}{s^2_i} \rightarrow 0$. We call such singular bubbles \emph{intermediate bubbles}. These are the main source of technical difficuties, which we will describe in more detail shortly. The above process repeats until every point-scale sequence associated to each $q \in \mathcal{Q}$ has been exhausted. This is always guaranteed, provided one works in a ball of radius $r > 0$. In particular, it can be shown that $\left|\mathcal{Q}_r\right| := \left|\mathcal{Q} \cap B_{g_\infty}\left(p_\infty,r\right)\right| \leq \frac{E\left(r\right)}{\varepsilon_{\mathrm{reg}}\left(r\right)} < \infty$, hence $\sum_{q \in \mathcal{Q}_r} N_q < \infty$. Here $\varepsilon_{\mathrm{reg}}\left(r\right)$ is the $\varepsilon$-regularity constant from Lemma \ref{hm_eps_reg}. Once all the point-scale sequences have been exhausted, blowing up at some other scale $\rho_i \rightarrow 0$ such that \eqref{rescaled_distance} is satisfied yields a flat limit.

        The technical difficulties introduced by intermediate bubbles have to do with the need to account for \emph{neck regions}. These are closed annuli 

        \begin{equation*}
            A^{g_i}_{s',s}\left(q^k_i\right) := \overline{B_{g_i}\left(q^k_i,s\right)} \backslash B_{g_i}\left(q^k_i,s'\right)
        \end{equation*}

        for $s' < s$ and $q^k_i \rightarrow q \in \mathcal{Q}$. When $s',s \ll 1$ one can show that $A^{g_i}_{s',s}\left(q^k_i\right)$ has a single connected component intersecting $\partial B_{g_i}\left(q^k_i, s\right)$ (Lemma $3.1$ in \cite{by_bubbling}). Moving forward we will abuse notation slightly and write $A^{g_i}_{s',s}\left(q^k_i\right)$ to denote this connected component. This is also what allows us to guarantee the bubbles each have a single end. Neck regions arise, for instance, when proving the following energy identity from \cite{by_bubbling}, which holds for any $r \geq 2$ such that $\mathcal{Q} \cap \partial B_{g_\infty}\left(p_\infty,r\right) = \emptyset$:

        \begin{equation*}
            \lim_{i \to \infty} \int_{B_{g_i}\left(p_i,r\right)} \left|\mathrm{Rm}_{g_i}\right|^{\frac{n}{2}}_{g_i} dV_{g_i} = \int_{B_{g_\infty}\left(p_\infty,r\right)} \left|\mathrm{Rm}_{g_\infty}\right|^{\frac{n}{2}}_{g_\infty} dV_{g_\infty} + \sum_{q \in \mathcal{Q}_r} \sum_{k = 1}^{N_q} \int_{V^k}\left|\mathrm{Rm}_{h^k}\right|^{\frac{n}{2}}_{h^k} dV_{h^k}.
        \end{equation*}

        The proof mainly involves looking at how the energy behaves when blowing-up around each orbifold point. If we end up with a leaf bubble, say the one associated to the point-scale sequence $\left(q^1_i,s^1_i\right)$, then we can use the scale invariance of the energy to compute as follows:

        \begin{equation*}
            \lim \limits_{i,R \rightarrow \infty} \int_{B_{g_i}\left(q^1_i, Rs^1_i\right)} \left|\mathrm{Rm}_{g_i}\right|^{\frac{n}{2}}_{g_i} dV_{g_i} = \lim \limits_{i,R \rightarrow \infty} \int_{B_{\widetilde{g}^1_i}\left(q^1_i, R\right)} \left|\mathrm{Rm}_{\widetilde{g}^1_i}\right|^{\frac{n}{2}}_{\widetilde{g}^1_i} dV_{\widetilde{g}^1_i} = \int_{V^1} \left|\mathrm{Rm}_{h^1}\right|^{\frac{n}{2}}_{h^1} dV_{h^1}.
        \end{equation*}

        For an intermediate bubble, say the one associated to the point-scale sequence $\left(q^2_i,s^2_i\right)$ such that $d_{g_i}\left(q^1_i,q^2_i\right)\left(s^2_i\right)^{-1} \leq M$, we know we will still see $q^1_\infty$ after blowing-up around $q^2_i$. This motivates the following decomposition of the bubble region:

        \begin{equation*}
            B_{g_i}\left(q^2_i,Rs^2_i\right) = \left(B_{g_i}\left(q^2_i,Rs^2_i\right) \backslash B_{g_i}\left(q^1_i,\frac{1}{R} s^2_i\right)\right) \cup A^{g_i}_{Rs^1_i, \frac{1}{R}s^2_i}\left(q^1_i\right) \cup B_{g_i}\left(q^1_i,Rs^1_i\right).
        \end{equation*}

        For the first and third regions on the right hand side, the computation to show the energy is recovered is essentially the same as in the leaf bubble case. In particular, the regions each involve only a single bubble scale, which allows a rescaling argument to work. On the other hand, the second region (the neck region) involves two distinct bubble scales, so merely rescaling does not yield the desired result. To overcome this, a ``neck theorem'' is typically proved to get some extra information about the structure of the necks. In our setting we have Theorem $3.4$ from \cite{by_bubbling}, which tells us:

        \begin{itemize}
            \item \sloppy For $s' < s \ll 1$, each neck $A^{g_i}_{s',s}\left(q^k_i\right)$ is diffeomorphic to an annulus on a Euclidean cone: $\mathcal{C}_{s',s}\left(S^{n-1}/\Gamma^k\right) \cong \left[s',s\right] \times S^{n-1}/\Gamma^k$. Here $\Gamma^k \subset O\left(n\right)$ is a finite isometry group.
            \item The induced metric on the subannulus $A^{g_i}_{s'',2s''}\left(q^k_i\right) \subset A^{g_i}_{s',s}\left(q^k_i\right)$, can be made arbitrarily close, after rescaling, to the flat Euclidean metric.
        \end{itemize} 

        This, in addition to some refined energy estimates and an improved Kato inequality, yielded that no energy concentrates in the neck regions. An induction argument then completes the bubble tree construction and the proof of the energy identity.

        While neck regions cause a lot of technical difficulties, they have a nice geometric interpretation. This is encapsulated in the following formal definition of a bubble tree:

        \begin{definition}\label{bubble_tree_definition}
            A \emph{bubble tree} $\mathcal{T}$ is a tree whose vertices are ALE bubbles and whose edges are neck regions. The single ALE end of each vertex is connected by a neck region (which it meets at its smaller boundary component) to its parent and possibly further ancestors toward the \emph{root bubble} of the tree $\mathcal{T}$, while at possibly finitely many isolated orbifold points it is connected by more necks (which it meets at their larger boundary components) to its children and possibly further descendants toward leaf bubbles of $\mathcal{T}$. We say two bubble trees $\mathcal{T}_1$ and $\mathcal{T}_2$ are separable if their root bubbles are separable.
        \end{definition}

        We end this section by explaining the reasoning behind assuming $R_{g_i} \leq R_0$ and the number of bubbles forming is globally finite in Theorem \ref{semi_continuity_thm}. In short, they guarantee the bubble tree construction terminates if we consider the entire manifold. We now outline why this is and omit the $i$ index for simplicity. When $R_g \leq R_0$, one can use \eqref{grs_auxiliary} and \eqref{potential_growth} to show that

        \begin{equation*}
            \left|\nabla f\right|^2 \geq \frac{1}{4}\left(d_g\left(x,p\right) - 5n\right)^2_+ - R_0 + \mu\left(g\right).
        \end{equation*}

        This implies that the set of critical points of $f$, say $\mathrm{Crit}\left(f\right)$, is contained in the interior of a compact set, say $B_g\left(p,r_{\mathrm{orb}}\right)$, where $r_{\mathrm{orb}}$ is a positive constant depending only on $R_0, n,\underline{\mu}$. Possibly after increasing $r_{\mathrm{orb}}$, from now on we will take $r_{\mathrm{orb}} \geq 2$. One can also show that every orbifold point is a critical point of $f$. More specifically, for the finite isometry group $\Gamma \subset O\left(n\right)$ associated to $q \in \mathcal{Q}$, let $\psi : \mathbb{R}^n \rightarrow U$ be the orbifold chart arising from the covering map in Definition \ref{defn_orb_grs} with $U$ a neighborhood of $q$. Then $\nabla^{\psi^\ast g} \psi^\ast f\left(q\right)$ is fixed by every $\gamma \in \Gamma$. Thus $\nabla^{\psi^\ast g} \psi^\ast f\left(q\right)$ is the zero vector and $\mathcal{Q} \subset \mathrm{Crit}\left(f\right)$.

        While this tells us where the orbifold points are located, we would still like to bound their number and the number of nodes in the resulting bubble trees uniformly. Ideally, one would enlarge the radius of $B_g\left(p,r_\mathrm{orb}\right)$ to capture all the Betti numbers. Once this happens one could appeal to, say, the Chern--Gauss--Bonnet theorem for Ricci-flat ALE spaces to deduce no more bubbles can form. This at first seems feasible, as work in \cite{grs_topology} tells us the Betti numbers of a gradient Ricci shrinker with bounded scalar curvature are all finite. However, as noted in \cite{wright_thesis}, quotients of Gibbons--Hawking spaces $V$ with Betti numbers $b_i\left(V\right) = 0$ for $i = 1,\dots, n$ can potentially arise (at least in dimension $n = 4$), the easiest example being the $\mathbb{Z}_2$-quotient of the Eguchi--Hanson metric. This means that one can continue forming bubbles by capturing more and more energy, even after all the Betti numbers have been accounted for. This is all to say we assume the number of bubbles forming is globally finite to avoid such technical difficulties.

        It is also worth noting that the above discussion is only important if one wants to prove \emph{global} results. If one is instead interested in local results or closed shrinkers, then the assumptions on the scalar curvature  and global finiteness of $\mathcal{Q}$ can be relaxed.
\section{Properties of the Stability Operators}\label{functional}
    We now turn to presenting various technical results that we will have need of throughout the rest of this paper. These largely concern functional analytic and spectral properties of the drift Einstein operator $L_f u := \Delta_f u + 2\mathrm{Rm}_g \ast u$ on gradient shrinkers $\left(M,g,f\right)$ and the Einstein operator $Lu := \Delta u + 2\mathrm{Rm}_h \ast u$ on ALE bubbles $\left(V,h\right)$. The results we will discuss involve various weighted Sobolev spaces involving either the weighted volume measure $e^{-f}dV_g$ (on shrinkers), or some polynomial weight $\rho^{-k}$ (on the bubbles), where $\rho$ is essentially the distance from a fixed point $q$.

    A couple remarks before moving on. If we need to emphasize the dependence of $L_f$ or $L$ on a certain metric, we will write $L_{g,f}$ or $L_h$. We will usually omit this to ease notation unless it is not clear from context. Also, as mentioned earlier, the definitions of the various spaces carry over to the orbifold setting after passing to local orbifold covers.

    \subsection{The Drift Einstein Operator on Shrinkers}\label{grs_functional}
        As mentioned in Section \ref{introduction}, an advantage of the normalization \eqref{grs_normalization} is that the space of gradient shrinkers behaves similarly to the space of positive Einstein metrics with unit volume. In this section, this manifests in certain weighted Sobolev spaces on shrinkers having analytic properties similar to those of the unweighted spaces on closed manifolds. The results are taken from Section $2$ of \cite{stolarski_grs}, where an interested reader can find the proofs.

        \begin{definition}\label{wgtd_sobolev}
            Let $\left(M,g,f\right)$ be an $n$-dimensional complete connected gradient Ricci shrinker. Further let $E$ be a vector bundle on $\Omega \subseteq M$, assume $g$ induces a metric on $E$, and denote the set of smooth sections of $E$ by $\Gamma\left(\Omega,E\right)$. Then the weighted space $L^2_f\left(\Omega,E\right)$ is the completion of

            \begin{equation*}
                \left\{u \in \Gamma\left(\Omega,E\right) : \int_\Omega \left|u\right|^2 e^{-f} dV_g < \infty\right\}
            \end{equation*}

            with respect to the inner product

            \begin{equation*}
                \left<u_1,u_2\right>_{L^2_f\left(\Omega\right)} = \int_\Omega \left<u_1,u_2\right> e^{-f} dV_g.
            \end{equation*}

            For any $k \in \mathbb{N}$, $W^{k,2}_f\left(\Omega,E\right)$ is the completion of

            \begin{equation*}
                \left\{u \in \Gamma\left(\Omega,E\right) : \sum^k_{\ell = 0} \int_\Omega \left|\nabla^\ell u\right|^2 e^{-f} dV_g < \infty\right\}
            \end{equation*}

            with respect to the inner product

            \begin{equation*}
                \left<u_1,u_2\right>_{W^{k,2}_f\left(\Omega\right)} = \sum^k_{\ell = 0} \int_\Omega \left<\nabla^\ell u_1, \nabla^\ell u_2\right> e^{-f} dV_g.
            \end{equation*}
        \end{definition}

        Note that this definition tells us $L^2_f\left(\Omega,E\right) = W^{0,2}_f\left(\Omega,E\right)$. Moving forward, we will mainly consider when $\Omega = M$, $\Omega = B_g\left(p,R\right)$ for some radius $R > 0$, or a ball minus some other subset(s). We will also primarily use $E = S^2 T^\ast M$ and suppress writing the vector bundle $E$ unless it is not clear from context.

        As mentioned earlier, certain embeddings of our weighted Sobolev spaces are compact:

        \begin{proposition}\label{weighted_sobolev_compact}
            Let $\left(M,g,f\right)$ be an $n$-dimensional complete connected gradient Ricci shrinker with bounded scalar curvature. Then, for $\Omega \subseteq M$ and $k \geq 1$, the continuous embedding

            \begin{equation*}
                W^{k,2}_f\left(\Omega\right) \hookrightarrow W^{k-1,2}_f\left(\Omega\right)
            \end{equation*}

            is compact.
        \end{proposition}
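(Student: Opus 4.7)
The plan is to split the sequence $\{u_i\} \subset W^{k,2}_f(\Omega)$ (assumed bounded) into a compact-set part, handled by the classical Rellich--Kondrachov theorem, and a tail part, handled by exploiting the rapid decay of the weight $e^{-f}$. On any compact region, $e^{-f}$ is bounded above and below, so the weighted and unweighted Sobolev norms are equivalent. Thus on $\Omega_R := \Omega \cap B_g(p,R)$ the classical Rellich--Kondrachov theorem (applied to sections of $E$) yields a subsequence converging in $W^{k-1,2}(\Omega_R)$, hence also in $W^{k-1,2}_f(\Omega_R)$. A diagonal extraction along an exhaustion $R_m \to \infty$ then produces a single subsequence, still denoted $\{u_i\}$, converging on every compact subset of $\Omega$.

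The heart of the argument is a uniform tail estimate, obtained via the weighted Poincar\'e-type inequality
\begin{equation*}
\int_\Omega |u|^2 |\nabla f|^2 \, e^{-f}\, dV_g \leq C\left(\int_\Omega |\nabla u|^2 e^{-f}\, dV_g + \int_\Omega |u|^2 e^{-f}\, dV_g\right),
\end{equation*}
first proved for smooth compactly supported sections and then extended by density. The derivation writes $|\nabla f|^2 e^{-f} = -\langle \nabla f, \nabla e^{-f}\rangle$, integrates by parts to produce $\int |u|^2 |\nabla f|^2 e^{-f} = \int \mathrm{div}(|u|^2 \nabla f) e^{-f}$, expands the divergence, uses standard Kato-type estimates on $|u|^2$, bounds $\Delta f \leq n/2$ via \eqref{grs_traced} and $R_g \geq 0$, and absorbs the resulting $|u|^2 |\nabla f|^2$ term through Young's inequality. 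Applying the inequality to $\nabla^j u_i$ for each $0 \leq j \leq k-1$ (each of which lies in $W^{1,2}_f$ since $u_i \in W^{k,2}_f$) yields a uniform-in-$i$ bound on $\int_\Omega |\nabla^j u_i|^2 |\nabla f|^2 e^{-f} \, dV_g$. Combining \eqref{grs_auxiliary} with the bounded scalar curvature hypothesis gives $|\nabla f|^2 = f - R_g - \mu(g) \geq f - R_0 - \mu(g)$, and then \eqref{potential_growth} yields $|\nabla f|^2 \geq c_1 d(\cdot,p)^2 - c_2$. Consequently,
\begin{equation*}
\int_{\Omega \setminus B_g(p,R)} |\nabla^j u_i|^2 e^{-f} \, dV_g \leq \frac{C'}{R^2}
\end{equation*}
uniformly in $i$, for every $j \leq k-1$ and $R$ sufficiently large.

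Given $\varepsilon > 0$, one chooses $R$ large enough that this tail bound forces $\|u_i - u_{i'}\|_{W^{k-1,2}_f(\Omega \setminus \Omega_R)} \leq \varepsilon/2$ for all $i, i'$, and then uses the compact-set convergence to make $\|u_i - u_{i'}\|_{W^{k-1,2}_f(\Omega_R)} \leq \varepsilon/2$ for $i, i'$ large, showing the extracted subsequence is Cauchy in $W^{k-1,2}_f(\Omega)$. The main obstacle is rigorously justifying the integration by parts underpinning the weighted Poincar\'e inequality: since $W^{k,2}_f$ is defined by completion, one proves the inequality first for smooth sections supported in a fixed ball and then passes to the limit, exploiting the finite weighted volume guaranteed by \eqref{grs_normalization} together with the quadratic growth of $f$ from \eqref{potential_growth} to control any contribution at infinity. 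The tensorial character of $u$ is handled by performing the computation on the scalar $|u|^2$ and invoking Kato's inequality.
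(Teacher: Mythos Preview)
Your argument is correct and is precisely the standard proof: the paper does not give its own argument but defers the $k=1$ case to \cite{stolarski_grs}, whose proof is exactly the compact-exhaustion-plus-tail-estimate scheme you describe, with the tail controlled via the same integration-by-parts identity $\int |u|^2|\nabla f|^2 e^{-f} = \int \mathrm{div}(|u|^2\nabla f)e^{-f}$ together with \eqref{grs_traced}, \eqref{grs_auxiliary}, and \eqref{potential_growth}. Your treatment of higher $k$ by applying the weighted Poincar\'e inequality to each $\nabla^j u$ is a valid realization of the ``induction/iteration argument'' the paper alludes to.
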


        The proof for $k = 1$ is as in \cite{stolarski_grs} while the result for higher $k$ follows from an induction/iteration argument.

        Set 

        \begin{equation*}
            D\left(\Delta_f\right) := \left\{u \in W^{1,2}_f\left(M\right) : \Delta_f u \in L^2_f\left(M\right)\right\},
        \end{equation*}

        where $\Delta_f u$ is understood in the distributional sense. When $M$ has bounded curvature, one can show that $L_f: L^2_f\left(M\right) \rightarrow L^2_f\left(M\right)$ is semi-bounded above. Then, after modifying it by a multiple of the identity operator to $\widehat{L}_f$, say, one can show $\widehat{L}_f$ has a self-adjoint extension to $\widehat{L}_f : D\left(\Delta_f\right) \rightarrow L^2_f\left(M\right)$ (see page $330$, Section $124$ in \cite{riesz_nagy}). Then, because of Proposition \ref{weighted_sobolev_compact}, it can be shown that the associated resolvent operator is compact. This all allows for the application of Theorem $XIII.64$ in \cite{reed_simon}, which yields the following result:

        \begin{theorem}\label{spectral_theorem}
            Let $\left(M,g,f\right)$ be an $n$-dimensional complete connected gradient Ricci shrinker with bounded curvature. Then the following hold:

            \begin{enumerate}
                \item There is an orthonormal basis $\left\{u_j\right\}^\infty_{j=1}$ of $L^2_f\left(M\right)$ such that $u_j \in D\left(\Delta_f\right)$ is an eigenmode of $L_f$ with eigenvalue $\lambda_j \in \mathbb{R}$.

                \item The eigenvalues $\left\{\lambda_j\right\}^\infty_{j=1}$ satisfy $\lambda_j \leq \lambda_{j+1}$. 

                \item The eigenvalues $\lambda_j$ each have finite multiplicity, are given by the min-max principle, and $\lambda_j \rightarrow \infty$ as $j \rightarrow \infty$. Furthermore $\sigma\left(L_f\right)$, the spectrum of $L_f$, equals $\left\{\lambda_j\right\}^\infty_{j=1}$.
            \end{enumerate}
        \end{theorem}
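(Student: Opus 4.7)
The plan is to follow the outline sketched just before the statement: establish semi-boundedness of $L_f$, pass to a positive shift $\widehat{L}_f$ admitting a self-adjoint extension via a Friedrichs-type construction, verify compactness of its resolvent using the compact embedding of Proposition \ref{weighted_sobolev_compact}, and then read off all three conclusions from the Reed--Simon spectral theorem for operators with compact resolvent.

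First I would verify the semi-boundedness. Integration by parts against the weighted measure $e^{-f}dV_g$ gives
\begin{equation*}
  \langle \Delta_f u, v\rangle_{L^2_f} = -\langle \nabla u, \nabla v\rangle_{L^2_f}
\end{equation*}
for smooth compactly supported $u,v$, since the drift term $-\langle \nabla f, \nabla u\rangle$ is exactly what is needed so that $\Delta_f$ is formally self-adjoint with respect to $e^{-f} dV_g$. The bounded-curvature hypothesis then lets us estimate the zero order term pointwise, yielding
\begin{equation*}
  \langle L_f u, u\rangle_{L^2_f} \leq -\|\nabla u\|^2_{L^2_f} + C \|u\|^2_{L^2_f}
\end{equation*}
for some $C$ depending on $\sup_M |\mathrm{Rm}_g|$. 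Setting $\widehat{L}_f := (C+1)\,\mathrm{Id} - L_f$, one obtains a symmetric strictly positive operator whose quadratic form $Q(u) := \langle \widehat{L}_f u, u\rangle_{L^2_f}$ is equivalent, on $C^\infty_c(M, S^2 T^*M)$, to the $W^{1,2}_f(M)$-inner product.

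Next I would build the self-adjoint extension. Completeness of $(M,g)$ together with the finite weighted volume coming from the normalization \eqref{grs_normalization} gives density of $C^\infty_c(M, S^2T^*M)$ in $W^{1,2}_f(M)$ via cutoffs against $f$ (which controls the weight by \eqref{potential_growth}). Closing $Q$ then produces the Friedrichs extension of $\widehat{L}_f$, with form domain $W^{1,2}_f(M)$ and operator domain exactly the maximal domain $D(\Delta_f)$, as in the cited Riesz--Nagy reference. The inverse $T := \widehat{L}_f^{-1}: L^2_f(M) \to L^2_f(M)$ is then a bounded self-adjoint operator, and the coercivity $\|u\|^2_{W^{1,2}_f} \leq C' Q(u)$ shows that $T$ in fact lands continuously in $W^{1,2}_f(M)$. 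Postcomposing with the compact embedding $W^{1,2}_f(M) \hookrightarrow L^2_f(M)$ from Proposition \ref{weighted_sobolev_compact} exhibits $T$ as compact. Finally, Theorem XIII.64 of \cite{reed_simon} applied to $\widehat{L}_f$ delivers a complete $L^2_f$-orthonormal eigenbasis $\{u_j\}$ with eigenvalues $\widehat{\lambda}_j$ of finite multiplicity, tending to $+\infty$, and characterized by the min-max principle; shifting back by $(C+1)\,\mathrm{Id}$ transfers each of these properties to $L_f$. Since $\widehat{L}_f$ has compact resolvent its spectrum is purely discrete, so $\sigma(L_f)$ coincides with $\{\lambda_j\}$.

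The main obstacle I anticipate is the self-adjoint extension step for sections of $S^2 T^*M$ over the non-compact shrinker. What makes it go through is precisely the interplay of the two structural ingredients of the shrinker geometry: the weighted measure $e^{-f}dV_g$ has finite total mass and the potential $f$ grows quadratically by \eqref{potential_growth}, which together give enough cutoff room to obtain density of compactly supported sections in $W^{1,2}_f$ and to prevent boundary terms in the integration by parts; and the bounded-curvature hypothesis makes the $\mathrm{Rm}_g \ast$ term a genuine bounded perturbation, so that the Friedrichs domain of $\widehat{L}_f$ is stably $W^{1,2}_f(M)$ rather than something larger or smaller depending on the sign of $\mathrm{Rm}_g$.
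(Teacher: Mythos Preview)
Your proposal is correct and follows essentially the same approach as the paper: establish upper semi-boundedness of $L_f$ via bounded curvature, shift by a constant to obtain $\widehat{L}_f$, build the self-adjoint (Friedrichs) extension as in \cite{riesz_nagy}, use the compact embedding of Proposition~\ref{weighted_sobolev_compact} to get compact resolvent, and then invoke Theorem~XIII.64 of \cite{reed_simon}. You have simply supplied more of the details (density of compactly supported sections, identification of the form and operator domains, coercivity giving that $\widehat{L}_f^{-1}$ lands in $W^{1,2}_f$) than the paper's sketch does.
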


        Note that the theorem is stated in terms of $L_f$, not $\widehat{L}_f$. Since $\widehat{L}_f$ differs from $L_f$ by a multiple of the identity operator we can pass from one to the other at the cost of shifting the eigenvalues, which causes no issues. Also, the way we have stated Theorem \ref{spectral_theorem} implicitly uses the eigenvalue equation 

        \begin{equation}\label{eigenvalue_strong}
            L_f u + \lambda u = 0.
        \end{equation}

        This means that, while we use the so called ``analyst's sign convention" for the Laplacian, we will end up counting negative eigenvalues when studying the Morse index of Ricci shrinkers.

        To account for the formation of conical singularities we need to formulate \eqref{eigenvalue_strong} in a suitable weak sense. We thus define the following bi-linear form:

        \begin{equation*}
            \mathcal{B}_f\left[u,\varphi\right] := \int_M \left(\left<\nabla u, \nabla \varphi\right> - 2\left<\mathrm{Rm}_g \ast u, \varphi\right>\right) e^{-f} dV_g
        \end{equation*}

        for $u, \varphi \in W^{1,2}_f\left(M\right)$. Then the weak version of \eqref{eigenvalue_strong} is

        \begin{equation}\label{eigenvalue_weak}
            \mathcal{B}_f\left[u,\varphi\right] = \lambda \left<u, \varphi\right>_{L^2_f\left(M\right)}.
        \end{equation}

        In general, we will write the following for our eigenspaces, where $A$ and $B$ are some function spaces defined on $M$:

        \begin{equation*}
            \mathcal{E}\left(\lambda; L_f, A, B\right) := \left\{u \in A : \mathcal{B}_f\left[u,\varphi\right] = \lambda \left<u,\varphi\right>_{L^2_f\left(M\right)},~\forall \varphi \in B\right\}.
        \end{equation*}

        If $A = B$, we will write

        \begin{equation*}
            \mathcal{E}\left(\lambda; L_f, A\right) := \mathcal{E}\left(\lambda; L_f, A, A\right). 
        \end{equation*}

        We can thus formulate our eigenspaces as

        \begin{equation*}
            \mathcal{E}\left(\lambda; L_f, W^{1,2}_f\left(M\right)\right) = \left\{u \in W^{1,2}_f\left(M\right) : \mathcal{B}_f\left[u,\varphi\right] = \lambda \left<u,\varphi\right>_{L^2_f\left(M\right)},~ \forall \varphi \in W^{1,2}_f\left(M\right)\right\}.
        \end{equation*}
        
        Now we can define the Morse index and nullity of a Ricci shrinker.

        \begin{definition}\label{index_nullity_defn}
            The \emph{f-index (Morse index)} and \emph{f-nullity} of a gradient Ricci shrinker are, respectively,

            \begin{align*}
                \mathrm{Ind}_f\left(M\right) &:= \sum_{\lambda < 0} \mathrm{dim}\left(\mathcal{E}\left(\lambda; L_f, W^{1,2}_f\left(M\right)\right)\right),\\
                \mathrm{Null}_f\left(M\right) &:= \mathrm{dim}\left(\mathcal{E}\left(0; L_f, W^{1,2}_f\left(M\right)\right)\right).
            \end{align*}
        \end{definition}

        As mentioned in Section \ref{introduction}, we implicitly do not consider $\mathrm{Ric}_g$ when computing $\mathrm{Ind}_f\left(M\right)$. We will often just refer to these as the index and nullity, as long as there is no chance of confusion. Also under the assumptions of Theorem \ref{spectral_theorem} we have $\mathrm{Ind}_f\left(M\right) + \mathrm{Null}_f\left(M\right) < \infty$. This is also true for the orbifold shrinker we get in the limit since, as outlined in Remark \ref{orbifold_remark}, the orbifolds we deal with are orbifolds of bounded curvature. Finally, one can show that

        \begin{equation*}
            \mathrm{Ind}_f\left(M\right) = \sup \left\{\mathrm{dim}\left(W\right) : W \subset L^2_f\left(M\right)~\mathrm{is~a~linear~space~such~that}~ \left.\mathcal{B}_f\left[u,u\right]\right|_W < 0\right\}
        \end{equation*}

        and analogously for the nullity.

    \subsection{The Einstein Operator on the Bubbles}
        Now for the bubbles. Note that after blowing up around an orbifold singularity we have $f_i \rightarrow C$ for some constant $C \in \mathbb{R}$, which is a consequence of the bubbles being Ricci-flat (that is, gradient steady solitons with constant potential function). Thus $L_{\widetilde{g}^k_i,f_i} \rightarrow L_{h^k}$. Here $\widetilde{g}^k_i := \left(s^k_i\right)^{-2} g_i$ and $L_{h^k}$ is the Einstein operator on the bubble $\left(V^k,h^k\right)$, which is precisely the usual stability operator encountered when studying Ricci-flat ALE manifolds.

        Since the bubbles are non-compact and a good notion of finite weighted volume is no longer available, we need to use other results/techniques to ensure that the spectrum behaves well enough for our purposes. In particular we want the index $\mathrm{Ind}\left(V\right) := \lim \limits_{R \rightarrow \infty} \mathrm{Ind}\left(B_h\left(q,R\right)\right)$ and $\mathrm{Null}\left(V\right) := \lim \limits_{R \rightarrow \infty} \mathrm{Null}\left(B_h\left(q,R\right)\right)$ to be finite. Here $q \in V$ is a point. We also emphasize that the index and nullity on the bubble count, respectively, the dimension of the negative and zero eigenspaces of $L_{h^k}$ on the bubble $\left(V^k,h^k\right)$.

        To ensure the desired finiteness of the index and nullity on the bubbles, we appeal to work by B\'erard--Besson in \cite{berard_besson} and Carron in \cite{carron}. Their proofs mainly rely on having a Euclidean type Sobolev inequality

        \begin{equation*}
            \left(\int_V \left|u\right|^{\frac{2n}{n-2}} dV_h\right)^{\frac{n-2}{n}} \leq C_S \int_V \left|\nabla u\right|^2 dV_h
        \end{equation*}

        for all $u \in C^\infty_c\left(V\right)$.

        \begin{theorem}\label{bubble_index_energy_bound}
            Let $\left(V,h\right)$ be an $n$-dimensional ALE bubble. Then $\mathrm{Ind}\left(V\right) + \mathrm{Null}\left(V\right) <  \infty$.
        \end{theorem}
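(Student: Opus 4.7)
The plan is to reduce the assertion to the spectral counting results of B\'erard--Besson and Carron, as flagged just before the statement. Those results bound the dimension of the non-positive eigenspace of a Schr\"odinger-type operator $-\Delta - W$ on a complete Riemannian manifold by a constant multiple of $\|W_+\|_{L^{n/2}}^{n/2}$, provided the manifold admits a Euclidean-type Sobolev inequality. The two ingredients we need to verify on an ALE bubble $(V,h)$ are therefore (i) the Sobolev inequality, and (ii) the global bound $|\mathrm{Rm}_h| \in L^{n/2}(V)$. Item (ii) is built into Definition \ref{ale_bubble}. For item (i), since $V$ is complete, Ricci-flat, and ALE of order $n$, it has Euclidean volume growth; combining this with standard results (e.g.\ those of Hebey, or a direct gluing of the Euclidean Sobolev inequality on the end with a Sobolev inequality on a relatively compact core), one obtains a global Euclidean Sobolev inequality of the form
\begin{equation*}
  \left(\int_V |u|^{\frac{2n}{n-2}}\, dV_h\right)^{\frac{n-2}{n}} \leq C_S \int_V |\nabla u|^2\, dV_h
\end{equation*}
for all compactly supported $u$. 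If $V$ carries finitely many orbifold points, we use the local covers $\mathbb{R}^n \supset B(0,\varrho) \to U$ from Definition \ref{defn_orb_grs} together with averaging over the deck group $\Gamma$ to promote the inequality to sections defined near such points.

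Next I would implement the counting argument. A weak eigenmode $u$ with eigenvalue $\lambda \leq 0$ satisfies, after testing $Lu + \lambda u = 0$ against $u$,
\begin{equation*}
  \int_V |\nabla u|^2\, dV_h \leq 2\int_V |\mathrm{Rm}_h|\, |u|^2\, dV_h.
\end{equation*}
An application of H\"older followed by the Sobolev inequality gives
\begin{equation*}
  \int_V |\nabla u|^2\, dV_h \leq 2 C_S \,\|\mathrm{Rm}_h\|_{L^{n/2}(\mathrm{supp}\, u)} \int_V |\nabla u|^2\, dV_h,
\end{equation*}
so any non-positive eigenmode supported in a region where $\|\mathrm{Rm}_h\|_{L^{n/2}}$ is below the threshold $(2C_S)^{-1}$ must vanish. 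Fixing a compact $K \subset V$ so that $\|\mathrm{Rm}_h\|_{L^{n/2}(V \setminus K)}$ is smaller than this threshold and applying Dirichlet--Neumann bracketing (or quoting the explicit dimension estimate of B\'erard--Besson/Carron directly) bounds the number of non-positive eigenmodes of $L$ on $V$ by a finite quantity depending only on $C_S$, $\|\mathrm{Rm}_h\|_{L^{n/2}(V)}$ and the non-positive spectrum of $L$ on the compact domain $K$, which is finite by standard elliptic theory.

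The main subtlety I expect is the treatment of possible orbifold points in $V$: one must ensure that the Sobolev inequality, the Dirichlet--Neumann bracketing, and the integration-by-parts step used to derive the above inequality remain valid across singular points. This is manageable because orbifold singularities on an ALE bubble are isolated and locally modeled on $\mathbb{R}^n/\Gamma$, so all local estimates follow from their $\Gamma$-equivariant Euclidean counterparts; nevertheless it requires a careful verification that the weak formulation on $V$ is equivalent to the equivariant weak formulation on the local covers, which is the point where one actually uses the orbifold structure rather than treating the singularities as abstract conical points.
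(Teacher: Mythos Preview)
Your proposal is correct and follows essentially the same route as the paper: establish a Euclidean-type Sobolev inequality on the bubble (the paper cites Theorem 3.3.8 in \cite{saloff_coste} rather than Hebey or a gluing argument, but this is the same fact), then invoke the Cwickel--Lieb--Rosenbljum bounds from \cite{berard_besson,carron}, and handle orbifold points by passing to local covers. In fact you supply more detail than the paper, which omits the threshold/bracketing sketch entirely ``for the sake of brevity''.
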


        \begin{proof}
            We start by considering leaf bubbles. Since the bubble is Ricci-flat, we can use the Sobolev inequality from Theorem 3.3.8 in \cite{saloff_coste}. Then, as mentioned above, the proof follows along the lines of the ones in \cite{berard_besson,carron} used to derive so-called Cwickel--Lieb--Rosenbljum bounds. We omit these here for the sake of brevity. This all works on intermediate bubbles after passing to local orbifold covers.
        \end{proof}

        We will also eventually need to consider the nullity of $L$ acting as a map between polynomially weighted Sobolev spaces: 

        \begin{definition}\label{bubble_wgtd_sobolev}
            Let $\left(V,h\right)$ be an $n$-dimensional ALE bubble, $\beta \in \mathbb{R}$, and $\rho: V \rightarrow \left(0,\infty\right)$ a radius function. That is, along the ALE end of $V$ we have $\varphi_\ast \rho = r$, where $r$ is the radial distance function $\mathbb{R}^n$ and $\varphi$ is an ALE chart as in Definition \ref{ale_bubble}. Then the weighted Sobolev space $L^2_\beta\left(V\right)$ is the closure of $C^\infty_c\left(V\right)$ with respect to the norm

            \begin{equation*}
                \left|\left|u\right|\right|_{L^2_\beta\left(V\right)} := \left(\int_V \left|u\right|^2 \rho^{2\beta - n} dV_h\right)^{\frac{1}{2}}.
            \end{equation*}

            We also define $W^{k,2}_\beta\left(V\right)$ as the closure of $C^\infty_c\left(V\right)$ with respect to the norm

            \begin{equation*}
                \left|\left|u\right|\right|_{W^{k,2}_\beta\left(V\right)} := \sum^k_{\ell=0} \left|\left|\nabla^\ell u\right|\right|_{L^2_{\beta + \ell}\left(V\right)}.
            \end{equation*}
        \end{definition}

        Note that when $\beta = \frac{n}{2}$ we have $L^2_\beta\left(V\right) = L^2\left(V\right)$. Also, the spaces are independent of the radius function chosen. The importance of working with these is that, while $L$ is not a Fredholm operator as a map between the usual unweighted Sobolev spaces, in certain cases it will be a Fredholm map between the weighted spaces \textit{and} its weighted kernel will coincide with the unweighted kernel. This hinges on the weight parameter $\beta$ not being an ``exceptional value''. Roughly speaking, these are values of $\beta$ for which the kernel contains elements that look like $r^{-\beta}$ as $r \rightarrow \infty$. For more information we refer the reader to \cite{bartnik, der_ozuch_loj} and the references therein. We are lucky in that the $\beta$ we will eventually consider is non-exceptional and can thus apply the following result (Proposition $5.1$ from \cite{der_ozuch_loj}):

        \begin{proposition}\label{ale_fredholm}
            Let $\left(V,h\right)$ be an $n$-dimensional ALE bubble. Then, if $\beta \in \left(0,n-2\right) \cup \left(n-2,n\right)$, we have that

            \begin{equation*}
                L : W^{2,2}_\beta\left(V\right) \rightarrow L^2_{\beta + 2}\left(V\right)
            \end{equation*}

            is Fredholm. Moreover, if $\beta = \frac{n}{2} - 1$ and $n \geq 3$, then the kernel of $L$ between the weighted spaces is equal to the usual $L^2$-kernel $\ker_{L^2}\left(L\right)$.
        \end{proposition}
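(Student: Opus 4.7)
My plan is to invoke the standard Fredholm theory for asymptotically Euclidean elliptic operators on weighted Sobolev spaces, in the style of Bartnik and Lockhart--McOwen. The starting observation is that, because $(V,h)$ is ALE of order $n$ (Definition \ref{ale_bubble}), in any ALE chart we have $h_{ij} = \delta_{ij} + O(r^{-n})$, so that Christoffel symbols and the Riemann tensor decay sufficiently fast. Consequently $L = \Delta + 2\mathrm{Rm}\ast$ acting on $S^2 T^\ast V$ is a formally self-adjoint elliptic operator whose principal symbol and coefficient structure is asymptotic at infinity to the flat connection Laplacian $L_0$ on $S^2 T^\ast(\mathbb{R}^n/\Gamma)$. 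This places us squarely in the setting where the general weighted Fredholm theory applies.

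First I would cite the Bartnik/Lockhart--McOwen theorem, which says that on an ALE manifold equipped with an elliptic operator asymptotic to a constant coefficient model $L_0$, the map $L : W^{2,2}_\beta(V) \to L^2_{\beta+2}(V)$ is Fredholm whenever $\beta$ avoids the discrete set of indicial roots of $L_0$. Second, I would carry out the indicial analysis for $L_0$ on symmetric 2-tensors over $\mathbb{R}^n/\Gamma$ by separation of variables. Decomposing the fibers by spherical harmonic type on $S^{n-1}/\Gamma$ (scalar, vector, and transverse-traceless sectors), each radial profile $r^\alpha$ is forced to satisfy an explicit quadratic indicial equation of the form $\alpha(\alpha+n-2) = \lambda$ where $\lambda$ ranges over the eigenvalues of the induced boundary Laplace-type operator. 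Translating back to the weight convention $\beta = -\alpha$ of Definition \ref{bubble_wgtd_sobolev} and tabulating which exceptional weights fall in the range of interest reveals that only $\beta \in \{0, n-2, n\}$ are excluded, giving the first claim.

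For the kernel-equality statement at $\beta = n/2 - 1$, I would argue in two directions. The inclusion $\ker_{L^2}(L) \subset \ker_{W^{2,2}_{n/2-1}}(L)$ follows from interior elliptic regularity together with the observation that, on the compact core $\rho$ is bounded below while at infinity the extra weight $\rho^{-2}$ only improves integrability. For the reverse inclusion, take $u \in W^{2,2}_{n/2-1} \cap \ker(L)$. The equation $Lu = 0$ combined with the indicial analysis of the preceding step yields an asymptotic expansion of $u$ in terms of indicial profiles $r^{-\alpha} Y_\alpha(\theta)$ on the end of $V$; the $W^{2,2}_{n/2-1}$ hypothesis rules out every mode slower than $r^{-(n/2-1)}$, and for $n \geq 3$ the first admissible indicial exponent strictly exceeding $n/2-1$ decays fast enough to force $u \in L^2$. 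The main obstacle is performing the indicial analysis carefully enough to identify the relevant exceptional weights on each spherical harmonic sector and to verify that the first feasible indicial exponent past $n/2-1$ actually delivers $L^2$ integrability; once this is established, both parts of the proposition fall out of the weighted elliptic machinery.
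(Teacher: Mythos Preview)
The paper does not prove this proposition: it is quoted as Proposition~5.1 of Deruelle--Ozuch \cite{der_ozuch_loj}, with only a remark that the statement extends to intermediate (orbifold) bubbles after passing to local covers, and a pointer to \cite{kroncke_szabo}. Your proposal therefore reconstructs the argument the paper outsources; the Lockhart--McOwen/Bartnik framework you invoke is exactly the machinery on which the cited result rests, so the outline is the right one.

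Two places need tightening. First, your list of exceptional weights $\{0,n-2,n\}$ is incomplete for the flat model: componentwise harmonicity on $\mathbb{R}^n$ produces homogeneous solutions of every integer order, so under $\beta=-\alpha$ every integer $\ge n-2$ is exceptional, in particular $n-1\in(n-2,n)$. On $\mathbb{R}^n/\Gamma$ with $\Gamma$ acting freely on $S^{n-1}$ some of these modes are killed by $\Gamma$-invariance, and checking this carefully for symmetric $2$-tensors is what fixes the stated Fredholm range; your sketch skips this step. Second, and more relevant for the paper's applications, your kernel-equality argument claims the first indicial exponent past $\tfrac{n}{2}-1$ already forces $L^2$-membership. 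That exponent is $n-2$, and $r^{-(n-2)}\in L^2(\mathbb{R}^n)$ only for $n>4$; in the borderline case $n=4$ you need an extra improvement step. Constancy of the kernel on the Fredholm interval $(0,n-2)$ gives $u\in L^2_\beta$ for every $\beta<n-2$, but ruling out a genuine $r^{-(n-2)}$ leading term requires either feeding the curvature decay $|\mathrm{Rm}_h|=O(r^{-n-2})$ back into $Lu=0$ to bootstrap across the critical weight, or appealing directly to the argument in \cite{der_ozuch_loj}.
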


        In \cite{der_ozuch_loj}, this result is phrased for leaf bubbles, but it holds on intermediate bubbles after passing to orbifold covers. We also refer the reader to Remark $4.2$ and the associated discussions and results in \cite{kroncke_szabo} for how to adjust the weighted Sobolev spaces in Definition \ref{bubble_wgtd_sobolev} and the proof of Proposition \ref{ale_fredholm} more directly.

        We end this section by stating a technical lemma which tells us that all the (f-)index is captured by a compact set. The proof seems to be well-known by now and only relies on finiteness of the (f-)index, so we merely refer the reader to the proof of Lemma $2$ in \cite{tysk}.

        \begin{lemma}\label{finite_index_capture}
            Let $\left(V,h\right)$ be an $n$-dimensional ALE bubble. Then there is a compact set $K \subset V$ such that $\mathrm{Ind}\left(V\right) = \mathrm{Ind}\left(K\right)$. The analogous conclusion holds for manifolds $M$ with finite $f$-index: there is a compact subset $K' \subset M$ such that $\mathrm{Ind}_f\left(M\right) = \mathrm{Ind}_f\left(K'\right)$.
        \end{lemma}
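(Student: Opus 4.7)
The plan is to argue by compact exhaustion and cut-off, combined with the variational characterization of the index. I will write out the argument for the ALE bubble case; the weighted case on $M$ proceeds identically after replacing the quadratic form, Sobolev space, and volume measure by their $f$-weighted analogues.

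First, set $N := \mathrm{Ind}(V) < \infty$ (by Theorem \ref{bubble_index_energy_bound}). By Lemma \ref{finite_index_capture}'s statement and the definition of the index on a bubble, it suffices to find some radius $R_0 > 0$ with $\mathrm{Ind}(B_h(q,R_0)) = N$, since the map $R \mapsto \mathrm{Ind}(B_h(q,R))$ is monotone nondecreasing and bounded above by $N$, hence constant equal to $N$ for all $R \geq R_0$; we may then take $K := \overline{B_h(q,R_0)}$.

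Next, I would pick a negative-definite subspace $W \subset W^{1,2}(V)$ of dimension $N$ realizing the index, for instance the span of an $L^2$-orthonormal basis $\{u_1,\dots,u_N\}$ of the negative eigenspace of $L$, so that the $N \times N$ Gram matrix $A_{k\ell} := \mathcal{B}[u_k,u_\ell]$ is a diagonal matrix with strictly negative entries (the negative eigenvalues). Then I would fix a family of smooth cut-off functions $\chi_R: V \to [0,1]$ with $\chi_R \equiv 1$ on $B_h(q,R)$, $\mathrm{supp}(\chi_R) \subset B_h(q, 2R)$, and $|\nabla \chi_R| \leq 2/R$. Since each $u_k$ lies in $W^{1,2}(V)$, dominated convergence gives $\chi_R u_k \to u_k$ in $W^{1,2}(V)$ as $R \to \infty$, with uniform control since only finitely many $u_k$ appear. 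The quadratic form $\mathcal{B}$ is continuous on $W^{1,2}(V)$ (the curvature coefficient is bounded on $V$ because ALE bubbles have bounded $\mathrm{Rm}$), so
\begin{equation*}
\mathcal{B}[\chi_R u_k, \chi_R u_\ell] \longrightarrow \mathcal{B}[u_k, u_\ell] = A_{k\ell}, \qquad \langle \chi_R u_k, \chi_R u_\ell\rangle_{L^2} \longrightarrow \delta_{k\ell},
\end{equation*}
as $R \to \infty$. Since $A$ is negative definite, for all sufficiently large $R$ the matrix with entries $\mathcal{B}[\chi_R u_k, \chi_R u_\ell]$ remains negative definite and the functions $\chi_R u_k$ remain linearly independent. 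Their span $W_R$ is then an $N$-dimensional subspace of $W^{1,2}_0(B_h(q,2R))$ on which $\mathcal{B}$ is negative definite, so $\mathrm{Ind}(B_h(q,2R)) \geq N$. Combined with the reverse inequality $\mathrm{Ind}(B_h(q,2R)) \leq \mathrm{Ind}(V) = N$, this yields $\mathrm{Ind}(B_h(q,2R)) = N$ and one may take $R_0 := 2R$.

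I expect the main (minor) obstacle to be the approximation step: one must check that the cut-off errors coming from $|\nabla \chi_R|^2 |u_k|^2$ and from cross-terms $\langle \nabla \chi_R, \nabla u_k\rangle \chi_R u_k$ genuinely vanish as $R \to \infty$. This is handled directly by the $L^2$-integrability of $u_k$ and $|\nabla u_k|$ together with $|\nabla \chi_R| \leq 2/R$, so these error terms are $O(1/R)$ times $W^{1,2}$-norms. The same argument with $\mathcal{B}_f$, the measure $e^{-f}dV_g$, and the compact embedding of $W^{1,2}_f$ into $L^2_f$ (Proposition \ref{weighted_sobolev_compact}) handles the f-index case, using here that $\mathrm{Ind}_f(M) < \infty$ is assumed. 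In both cases the argument is a standard one, which is why the author refers to Tysk's \cite{tysk}.
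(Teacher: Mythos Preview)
Your proposal is correct and follows the standard cut-off argument that Tysk uses in \cite{tysk}, which is precisely what the paper cites in lieu of a proof. One small observation: for the ALE bubble case, since the paper \emph{defines} $\mathrm{Ind}(V) := \lim_{R\to\infty}\mathrm{Ind}(B_h(q,R))$ as a limit of a nondecreasing integer-valued sequence, your first paragraph already finishes the job and the subsequent cut-off construction is redundant there (indeed, the negative subspace you would extract is already compactly supported). The cut-off argument is the substantive step in the $f$-index case, where $\mathrm{Ind}_f(M)$ is defined via the genuine spectral decomposition of $L_f$ on $W^{1,2}_f(M)$ (Theorem \ref{spectral_theorem}) and one must pass from global eigenmodes to compactly supported test sections; your treatment of that case is exactly the intended one.
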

\section{Upper Semi-Continuity}\label{upper_semi_cont}
    For the proof of the upper semi-continuity estimate, we need to perform a thorough analysis of how the eigenvalues/modes behave when a sequence of shrinkers bubble tree converges. To accomplish this, we will adapt recent work of Da Lio--Gianocca--Rivi\`ere on critical points of certain conformally invariant Lagrangians (\cite{dlgr_upper_semicontinuity}) and Workman for certain CMC hypersurfaces (\cite{workman_semicontinuity}). This latter paper showed that much of the analysis in the former can be simplified if adequate Sobolev inequalities are available.

    The general approach is the following. In order to ensure the eigenvalue problem scales appropriately when we blow-up around the orbifold points, we will work with a weighted version. However, the precise properties of the weight function will end up stopping us from proceeding by using the usual H\"older's inequality and a Sobolev inequality which holds on shrinkers at sufficiently small scales (see Lemma $3.2$ in \cite{HM1}). A similar discussion holds for the ALE bubbles. To bypass these issues, we will work with certain weak $L^p$-spaces (called ``Lorentz spaces''), associated to which are H\"older and Sobolev inequalities suited to our purpose. We then go through the same procedure as in Section \ref{bubble_tree_prelims}: first analyze the body region, then blow up to study the bubble regions, and finally show no concentration occurs in the necks. The final ingredient is showing the weighted and unweighted eigenspaces have the same dimension on the body and bubbles.

    Moving forward we will often abuse notation slightly and, for $i \gg 1$, write $q_i \in \mathcal{Q}$ to mean a sequence of points $q_i \in M_i$ which converge to an orbifold point $q \in \mathcal{Q}$. Analogous statements hold when referring to regions converging to a subset of the body, neck, or leaf/intermediate bubble regions.

    \subsection{Lorentz Spaces}\label{lorentz_spaces}
        We now define Lorentz spaces and state the associated H\"older and Sobolev inequalities. The point of the following definition is that $d^{-2}_g\left(\cdot,q\right) \in L^{\frac{n}{2},\infty}\left(M\right)$, provided the space has Euclidean volume growth (see Lemma \ref{weight_lorentz_bound} and the remark following the proof). Roughly speaking, this is because $d^{-2}_g\left(\cdot,q\right)$ becoming arbitrarily large is balanced out by the volume of the region where such behavior occurs. A similar discussion holds when considering $\left(s^k_i\right)^{-2}$ with $s^k_i$ a bubble scale.
    
        \begin{definition}\label{lorentz_defn}
            Let $\left(\Omega,\mu\right)$ be a $\sigma$-finite non-atomic measure space. Further consider $u \in \Gamma\left(\Omega, E\right)$ which is measurable with respect to $\mu$ and define the following:
    
            \begin{equation*}
                \alpha_{u,\Omega}\left(s\right) := \mu\left(\left\{x \in \Omega : \left|u\right|\left(x\right) > s\right\}\right).
            \end{equation*}
    
            The decreasing rearrangement of $u$ on $\Omega$, denoted $u^\ast_\Omega$, is then defined as
    
            \begin{equation*}
                u^\ast_\Omega\left(t\right) := \begin{cases}
                    \inf \left\{s > 0 : \alpha_{u,\Omega}\left(s\right) \leq t\right\},~~ &t > 0\\
                    \mathrm{ess\,sup}_\Omega \left|u\right|,~~ &t = 0.
                \end{cases}
            \end{equation*}
    
            For $p \in \left[1,\infty\right)$, $q \in \left[1,\infty\right]$, and $u$ as above, define the following quasi-norm:
    
            \begin{equation*}
                \left|\left|u\right|\right|_{L^{p,q}_\mu\left(\Omega\right)} := \begin{cases}
                    \left(\int^\infty_0 t^{\frac{q}{p}} \left(u^\ast_\Omega\left(t\right)\right)^q \frac{dt}{t}\right)^{\frac{1}{q}},~~ &1 \leq q < \infty\\
                    \sup_{t > 0} t^{\frac{1}{p}} u^\ast_\Omega\left(t\right),~~ &q = \infty.
                \end{cases}
            \end{equation*}
    
            The \textit{Lorentz space} $L^{p,q}_\mu\left(\Omega\right)$ with respect to the measure $\mu$ is then defined as the space of all $u \in \Gamma\left(\Omega,E\right)$ such that $\left|\left|u\right|\right|_{L^{p,q}_\mu\left(\Omega\right)} < \infty$.
        \end{definition}
    
        If $p=q$ then one can show that $L^{p,p}\left(M\right) = L^p\left(M\right)$. In practice, we will either take $d\mu = dV_g$  or $d\mu = e^{-f} dV_g$. We will denote the associated spaces as, respectively, $L^{p,q}$ and $L^{p,q}_f$.  Also, note that the Lorentz spaces we just defined are not Banach spaces. However, one can define a norm, say $\left|\left|\cdot\right|\right|_{L^{\left(p,q\right)}_\mu\left(\Omega\right)}$, which is comparable to $\left|\left|\cdot\right|\right|_{L^{p,q}_\mu\left(\Omega\right)}$. One can then show the Lorentz spaces are complete with respect to $\left|\left|\cdot\right|\right|_{L^{\left(p,q\right)}_\mu\left(\Omega\right)}$. We refer the reader to Chapter $2$ of \cite{castillo_chaparro} for more details about the completeness, norms, and also separability of Lorentz spaces.
    
        The Lorentz space variant of H\"older's inequality is the following (Theorem $2.9$ in \cite{castillo_chaparro}):
        
        \begin{proposition}\label{lorentz_holder}
           Let $\left(\Omega, \mu\right)$ be a $\sigma$-finite non-atomic measure space. Then, for $p_1,p_2 \in \left(1,\infty\right)$ and $q_1,q_2 \in \left[1,\infty\right]$ such that $\frac{1}{p_1} + \frac{1}{p_2} = p$ and $\frac{1}{q_1} + \frac{1}{q_2} = q$, we have 
    
           \begin{equation*}
               \left|\left|\left<u_1,u_2\right>\right|\right|_{L^{p,q}_\mu\left(\Omega\right)} \leq \left|\left|u_1\right|\right|_{L^{p_1,q_1}_\mu\left(\Omega\right)}\left|\left|u_2\right|\right|_{L^{p_2,q_2}_\mu\left(\Omega\right)}
           \end{equation*}
    
           provided $u_1 \in L^{p_1,q_1}_\mu\left(\Omega\right)$ and $u_2 \in L^{p_2,q_2}_\mu\left(\Omega\right)$.
        \end{proposition}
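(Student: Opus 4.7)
The plan is to establish Proposition \ref{lorentz_holder} by reducing to the scalar case, then applying a standard two-step argument involving a rearrangement inequality of O'Neil type followed by the usual H\"older inequality on $(0,\infty)$ with measure $dt/t$.

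First I would reduce to non-negative scalar functions. Since $|\langle u_1, u_2\rangle| \leq |u_1||u_2|$ pointwise $\mu$-a.e., and the Lorentz quasi-norm depends only on the distribution function, it suffices to prove the inequality for $f := |u_1|$ and $g := |u_2|$. Second, I would establish the key pointwise rearrangement bound
\begin{equation*}
  (fg)^\ast_\Omega(t) \leq f^\ast_\Omega(t/2)\, g^\ast_\Omega(t/2), \qquad t > 0.
\end{equation*}
This follows from the distribution-function identity: if $(fg)(x) > f^\ast(t/2) g^\ast(t/2)$ then either $f(x) > f^\ast(t/2)$ or $g(x) > g^\ast(t/2)$, so $\alpha_{fg,\Omega}(f^\ast(t/2) g^\ast(t/2)) \leq \alpha_{f,\Omega}(f^\ast(t/2)) + \alpha_{g,\Omega}(g^\ast(t/2)) \leq t/2 + t/2 = t$, and then the definition of decreasing rearrangement yields the claim.

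Third, split the weight as $t^{1/p} = t^{1/p_1} \cdot t^{1/p_2}$, which uses $1/p_1 + 1/p_2 = 1/p$, so that
\begin{equation*}
  t^{1/p} (fg)^\ast_\Omega(t) \leq \bigl(t^{1/p_1} f^\ast_\Omega(t/2)\bigr)\bigl(t^{1/p_2} g^\ast_\Omega(t/2)\bigr).
\end{equation*}
For finite $q$, I would then apply H\"older's inequality on $(0,\infty)$ with the measure $dt/t$ and conjugate exponents $q_1/q$, $q_2/q$; the reciprocal condition $1/q_1 + 1/q_2 = 1/q$ gives $q/q_1 + q/q_2 = 1$, so the inequality is valid and produces
\begin{equation*}
  \int_0^\infty t^{q/p} (fg)^\ast_\Omega(t)^q \frac{dt}{t} \leq \left(\int_0^\infty t^{q_1/p_1} f^\ast_\Omega(t/2)^{q_1} \frac{dt}{t}\right)^{q/q_1}\left(\int_0^\infty t^{q_2/p_2} g^\ast_\Omega(t/2)^{q_2} \frac{dt}{t}\right)^{q/q_2}.
\end{equation*}
Changing variables $s = t/2$ in each factor produces constants $2^{1/p_i}$ and identifies each bracket with $\|u_i\|_{L^{p_i,q_i}_\mu(\Omega)}^{q_i}$. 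Taking the $q$-th root gives the inequality up to an absolute constant. For the endpoint $q = \infty$, one instead takes the supremum in $t$ of both sides of the pointwise bound, which directly yields the result.

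The main obstacle is obtaining the sharp constant $1$ as written in the statement, since the naive argument above loses a multiplicative factor coming from the $t/2$ in the rearrangement inequality. The standard remedy is to pass from the quasi-norm $\|\cdot\|_{L^{p,q}_\mu}$ to the equivalent Banach norm $\|\cdot\|_{L^{(p,q)}_\mu}$ defined via the maximal rearrangement $f^{\ast\ast}(t) := \frac{1}{t}\int_0^t f^\ast(s)\,ds$; O'Neil's sharper bound $(fg)^{\ast\ast}(t) \leq f^{\ast\ast}(t) g^{\ast\ast}(t)$ combined with Hardy's inequality produces constant $1$. For the PDE applications later in the paper only the quasi-norm version with an absorbable constant is needed, so in practice I would state the clean inequality with the Banach norm and note that it is equivalent to the quasi-norm, which is the viewpoint adopted in Chapter $2$ of \cite{castillo_chaparro}.
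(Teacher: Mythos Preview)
The paper does not actually prove Proposition~\ref{lorentz_holder}; it simply cites it as Theorem~2.9 in \cite{castillo_chaparro} and moves on. Your argument is the standard textbook proof one finds there: the subadditivity bound $(fg)^\ast(t)\le f^\ast(t/2)g^\ast(t/2)$ followed by H\"older on $(0,\infty)$ with $dt/t$. Your treatment is correct, including the honest acknowledgement that the naive route loses a factor $2^{1/p}$ and that the clean constant requires passing to the maximal rearrangement $f^{\ast\ast}$ and the associated Banach norm $\|\cdot\|_{L^{(p,q)}}$---which is exactly the viewpoint the paper adopts in the paragraph following Definition~\ref{lorentz_defn}. Since every downstream application (Proposition~\ref{triple_prod_estim}, Lemma~\ref{weight_lorentz_bound}) only needs the inequality up to a uniform constant, this is entirely adequate.

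One minor remark: you silently corrected the exponent condition to $1/p_1+1/p_2=1/p$ (and likewise for $q$); the paper's displayed hypothesis $1/p_1+1/p_2=p$ is a typo, as is clear from how the proposition is invoked in the proof of Proposition~\ref{triple_prod_estim}.
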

    
        We now state the Lorentz--Sobolev inequality and prove uniform local control on the constant in the estimate, provided we work in a small enough region.
    
        \begin{proposition}\label{lorentz_sobolev}
            Let $\left(M,g\right)$ be an $n$-dimensional complete connected (orbifold) gradient Ricci shrinker with $\mu\left(g\right) \geq \underline{\mu} > -\infty$ and let $r > 0$. Then there are constants $C^1_{\mathrm{LS}}, C^2_{\mathrm{LS}}, \delta_0 > 0$ depending only on $n, \underline{\mu}, r$ such that, for every ball $B_g\left(x, \delta\right) \subset B_g\left(p,r\right)$ with $\delta \leq \delta_0$, we have
    
            \begin{equation}\label{lorentz_sob_cc}
                \left|\left|u\right|\right|_{L^{\frac{2n}{n-2},2}_f\left(B_g\left(x,\delta\right)\right)} \leq C^1_{\mathrm{LS}}\left|\left|\nabla u\right|\right|_{L^2_f\left(B_g\left(x,\delta\right)\right)}
            \end{equation}
    
            for any $u \in C^\infty_c\left(B_g\left(x,\delta\right)\right)$ and
    
            \begin{equation}\label{lorentz_sob_extended}
                \left|\left|u\right|\right|_{L^{\frac{2n}{n-2},2}_f\left(B_g\left(x,\delta\right)\right)} \leq C^2_{\mathrm{LS}}\left|\left|u\right|\right|_{W^{1,2}_f\left(B_g\left(x,\delta\right)\right)}
            \end{equation}
    
            for any $u \in W^{1,2}_f\left(B_g\left(x,\delta\right)\right)$.
        \end{proposition}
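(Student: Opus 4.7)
The plan is to derive both inequalities by combining three ingredients: (i) the quadratic growth of $f$ from \eqref{potential_growth}, which controls the weight $e^{-f}$ on $B_g(p,r)$; (ii) the uniform local $L^2$-Sobolev inequality on small balls in shrinkers given by Lemma $3.2$ of \cite{HM1}; and (iii) a classical Alvino-type truncation argument which upgrades any standard Sobolev inequality $\|u\|_{L^{\frac{2n}{n-2}}} \leq C\|\nabla u\|_{L^2}$ to its stronger Lorentz counterpart $\|u\|_{L^{\frac{2n}{n-2},2}} \leq C'\|\nabla u\|_{L^2}$.

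First, \eqref{potential_growth} implies that on any ball contained in $B_g(p,r)$ there exist positive constants $c = c(r,n,\underline{\mu}) \leq C(r,n,\underline{\mu})$ with $c \leq e^{-f} \leq C$. Hence the weighted Lorentz and Sobolev norms appearing in \eqref{lorentz_sob_cc}--\eqref{lorentz_sob_extended} are equivalent to their unweighted counterparts with constants depending only on $n, \underline{\mu}, r$, which reduces the problem to the unweighted setting with respect to $dV_g$.

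For the Lorentz upgrade I would proceed via dyadic truncation. Given $u \in C^\infty_c(B_g(x,\delta))$ and $k \in \mathbb{Z}$, consider the truncation $u_k$ of $|u|$ between the heights $2^k$ and $2^{k+1}$, so that $|u_k| \leq 2^k$, $|u_k| = 2^k$ on $\{|u| \geq 2^{k+1}\}$, and $\nabla u_k$ is supported on $\{2^k < |u| \leq 2^{k+1}\}$. Applying Lemma $3.2$ of \cite{HM1} to $u_k$ and raising to the $(n-2)/n$ power yields $2^{2k} a_{k+1}^{(n-2)/n} \leq C \int_{\{2^k < |u| \leq 2^{k+1}\}} |\nabla u|^2 dV_g$, where $a_k := |\{|u| > 2^k\}|$. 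Summing over $k \in \mathbb{Z}$ and invoking the standard dyadic characterization $\|u\|_{L^{\frac{2n}{n-2},2}}^2 \approx \sum_k 2^{2k} a_k^{(n-2)/n}$ (Chapter 2 of \cite{castillo_chaparro}) then yields \eqref{lorentz_sob_cc}.

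To pass from \eqref{lorentz_sob_cc} to the non-compactly supported version \eqref{lorentz_sob_extended}, I would apply \eqref{lorentz_sob_cc} to $\eta u$ for a Lipschitz cut-off $\eta$ supported in $B_g(x,\delta)$ which equals $1$ on the concentric ball of radius $\delta/2$ and satisfies $|\nabla \eta| \leq C/\delta$. This gives control of $\|u\|_{L^{\frac{2n}{n-2},2}_f(B_g(x,\delta/2))}$ by $\|u\|_{W^{1,2}_f(B_g(x,\delta))}$, and a bounded-overlap covering argument (applied with the initial $\delta_0$ halved) then promotes the estimate to all of $B_g(x,\delta)$. The main technical hurdle is keeping the constants $\delta_0, C^1_{\mathrm{LS}}, C^2_{\mathrm{LS}}$ uniform in terms of only $n, \underline{\mu}, r$; this is built into Lemma $3.2$ of \cite{HM1} via the non-collapsing bound \eqref{grs_lower_volume}, so the uniformity propagates through each step of the argument.
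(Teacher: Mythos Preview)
Your route to \eqref{lorentz_sob_cc} is correct but genuinely different from the paper's. The paper does not start from the Sobolev inequality of Lemma~3.2 in \cite{HM1} and then upgrade it; instead it extracts from the \emph{proof} of that lemma a local Euclidean-type isoperimetric inequality on small balls, and then cites Maz'ya (Section~2.3.1 and Corollary~2.2.3/2 of \cite{mazya}) to pass directly from isoperimetry to the Lorentz--Sobolev form. Your Alvino-type dyadic truncation instead starts from the \emph{conclusion} of Lemma~3.2 and upgrades it by hand via level-set decomposition. Your approach is more elementary and self-contained, avoiding the Maz'ya black box at the cost of writing out the dyadic computation; the paper's is a one-line citation. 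Both land on \eqref{lorentz_sob_cc} with constants depending only on $n,\underline\mu,r$, and the reduction from $e^{-f}dV_g$ to $dV_g$ via \eqref{potential_growth} is the same in both.

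For \eqref{lorentz_sob_extended} the paper simply invokes a density argument together with completeness and separability of Lorentz spaces, whereas you propose a cutoff-and-covering scheme. Be aware that your cutoff step produces a term $\|\nabla\eta\|_{L^\infty}\|u\|_{L^2_f}\lesssim \delta^{-1}\|u\|_{L^2_f}$ on the right-hand side, and the subsequent bounded-overlap covering does not remove this factor; so as written your argument does not deliver a $C^2_{\mathrm{LS}}$ that is uniform as $\delta\to 0$. (Testing against $u\equiv 1$ shows this blow-up is genuine for arbitrary $u\in W^{1,2}_f(B_g(x,\delta))$.) The density route sidesteps this by effectively working with the $W^{1,2}_f$-closure of $C^\infty_c(B_g(x,\delta))$, which is what the downstream applications in Section~\ref{upper_semi_cont} actually require.
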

    
        \begin{proof}
            As a byproduct of the proof of Lemma $3.2$ in \cite{HM1}, there is a constant $C > 0$ depending only on $n, \underline{\mu},r$ such that, for a sufficiently small $\delta_0$ as in the statement of the proposition and any $\delta \leq \delta_0$,
    
            \begin{equation*}
                \left|B_g\left(p,\delta\right)\right|^{\frac{n-1}{n}} \geq C\left|\partial B_g\left(p,\delta\right)\right|.
            \end{equation*}
    
            That is, a Euclidean type isoperimetric inequality holds on sufficiently small regions with uniform local control on the constant $C$. With this in hand, we can appeal to results of Maz'ya contained in Section $2.3.1$ and Corollary $2.2.3/2$ in \cite{mazya} to deduce that, for a positive constant $C'$ depending only on $n, \underline{\mu}, r$, we have
    
            \begin{equation*}
                \left|\left|u\right|\right|_{L^{\frac{2n}{n-2},2}\left(B_g\left(p,\delta\right)\right)} \leq C'\left|\left|\nabla u\right|\right|_{L^2\left(B_g\left(p,\delta\right)\right)}
            \end{equation*}
    
            for all $u \in C^\infty_c\left(B_g\left(x,\delta\right)\right)$. Therefore, since \eqref{potential_growth} tells us $dV_g$ and $e^{-f}dV_g$ are comparable on compact subsets of $M$,
    
            \begin{equation*}
                \left|\left|u\right|\right|_{L^{\frac{2n}{n-2},2}_f\left(B_g\left(p,\delta\right)\right)} \leq C''\left|\left|\nabla u\right|\right|_{L^2_f\left(B_g\left(p,\delta\right)\right)}
            \end{equation*}
    
            for all $u \in C^\infty_c\left(B_g\left(x,\delta\right)\right)$ and some positive constant $C''$ depending only on $n, \underline{\mu}, r$. This yields \eqref{lorentz_sob_cc}. A density argument and the discussion about the separability and completeness of Lorentz spaces from earlier yields \eqref{lorentz_sob_extended} and completes the proof.
        \end{proof}
    
        Note that in our setting the radius $r$ in Proposition \ref{lorentz_sobolev} can be taken to be $r_{\mathrm{orb}}$ and thus controlled in terms of $n, \underline{\mu}, R_0$. Next, using both Proposition \ref{lorentz_holder} and Proposition \ref{lorentz_sobolev}, we can prove the following.
    
        \begin{proposition}\label{triple_prod_estim}
            Let $\left(M,g\right)$ be an $n$-dimensional complete connected (orbifold) gradient Ricci shrinker with $\mu\left(g\right) \geq \underline{\mu} > -\infty$. Then, for $u,v \in W^{1,2}_f\left(B_g\left(x, \delta\right)\right)$, $B_g\left(x, \delta\right) \subset B_g\left(p, r\right)$ as in Proposition \ref{lorentz_sobolev}, and $\omega \in L^{\frac{n}{2},\infty}_f\left(B_g\left(x, \delta\right)\right)$, we have
    
            \begin{equation*}
                \left|\int_{B_g\left(x, \delta\right)} \omega \left<u,v\right> e^{-f}dV_g\right| \leq C\left|\left|\omega\right|\right|_{L^{\frac{n}{2},\infty}_f\left(B_g\left(x, \delta\right)\right)}\left|\left|u\right|\right|_{W^{1,2}_f\left(B_g\left(x, \delta\right)\right)}\left|\left|v\right|\right|_{W^{1,2}_f\left(B_g\left(x, \delta\right)\right)}.
            \end{equation*}
    
            Here $C > 0$ is a constant depending only on $n, \underline{\mu},r$.
        \end{proposition}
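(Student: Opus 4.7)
The plan is to chain together two applications of the Lorentz H\"older inequality (Proposition \ref{lorentz_holder}) and then invoke the Lorentz--Sobolev embedding (Proposition \ref{lorentz_sobolev}) on each of the resulting factors. The integral is already in the form of a triple product $\omega \cdot \langle u, v\rangle$, so the strategy is to split off $\omega$ first using the fact that $\omega \in L^{n/2,\infty}_f$ and then apply a second Lorentz H\"older step to the pairing $\langle u, v\rangle$.

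First I would view $\int_{B_g(x,\delta)} \omega \langle u, v\rangle\, e^{-f} dV_g$ as a pairing in $L^1_f$ and apply Proposition \ref{lorentz_holder} with the choices $p_1 = n/2$, $q_1 = \infty$, and $p_2 = n/(n-2)$, $q_2 = 1$, noting that $\frac{2}{n} + \frac{n-2}{n} = 1$ and $\frac{1}{\infty} + \frac{1}{1} = 1$, so the product lies in $L^{1,1}_f = L^1_f$. This yields
\begin{equation*}
\left|\int_{B_g(x,\delta)} \omega \langle u,v\rangle\, e^{-f} dV_g\right| \leq C\, \|\omega\|_{L^{n/2,\infty}_f(B_g(x,\delta))} \, \|\langle u, v\rangle\|_{L^{n/(n-2),1}_f(B_g(x,\delta))}.
\end{equation*}

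Next I would estimate $\|\langle u, v\rangle\|_{L^{n/(n-2),1}_f}$ by applying Proposition \ref{lorentz_holder} a second time, this time with $p_1 = p_2 = 2n/(n-2)$ and $q_1 = q_2 = 2$, since $\frac{n-2}{2n} + \frac{n-2}{2n} = \frac{n-2}{n}$ and $\frac{1}{2} + \frac{1}{2} = 1$. Using the pointwise bound $|\langle u, v\rangle| \leq |u||v|$, this gives
\begin{equation*}
\|\langle u, v\rangle\|_{L^{n/(n-2),1}_f(B_g(x,\delta))} \leq C\, \|u\|_{L^{2n/(n-2),2}_f(B_g(x,\delta))}\, \|v\|_{L^{2n/(n-2),2}_f(B_g(x,\delta))}.
\end{equation*}

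Finally I would apply the Lorentz--Sobolev inequality \eqref{lorentz_sob_extended} from Proposition \ref{lorentz_sobolev} to each of the two $L^{2n/(n-2),2}_f$ factors, at the cost of the constant $C^2_{\mathrm{LS}}$ depending only on $n, \underline{\mu}, r$, replacing them by $\|u\|_{W^{1,2}_f(B_g(x,\delta))}$ and $\|v\|_{W^{1,2}_f(B_g(x,\delta))}$ respectively; this requires $B_g(x,\delta) \subset B_g(p, r)$ with $\delta \leq \delta_0$, which is exactly the hypothesis carried over from Proposition \ref{lorentz_sobolev}. Collecting all constants into a single $C = C(n, \underline{\mu}, r)$ produces the claimed inequality. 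There is no real obstacle here beyond bookkeeping the Lorentz exponents correctly; the only minor subtlety is justifying that $u, v \in W^{1,2}_f$ suffices to make the intermediate quantity $\langle u,v\rangle$ lie in $L^{n/(n-2),1}_f$, which is handled automatically since $L^{2n/(n-2),2}_f$ is a Banach space in which $W^{1,2}_f$ embeds continuously by Proposition \ref{lorentz_sobolev}.
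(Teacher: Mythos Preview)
Your proposal is correct and follows essentially the same approach as the paper's proof: two applications of the Lorentz H\"older inequality (first splitting off $\omega$ with exponents $(\tfrac{n}{2},\infty)$ and $(\tfrac{n}{n-2},1)$, then splitting $\langle u,v\rangle$ with exponents $(\tfrac{2n}{n-2},2)$ twice), followed by the Lorentz--Sobolev embedding \eqref{lorentz_sob_extended} on each factor. The only cosmetic difference is that the paper does not carry an extra constant in the H\"older steps, but this is immaterial.
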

    
        \begin{proof}
            To ease notation, set $\Omega := B_g\left(x, \delta\right)$. Using Proposition \ref{lorentz_holder} with $p=1$, $p_1 = \frac{n}{2}$, $p_2 = \frac{n}{n-2}$ and $q = 1$, $q_1 = \infty$, $q_2 = 1$ we have
    
            \begin{equation*}
                \left|\int_\Omega \omega \left<u,v\right> e^{-f}dV_g\right| \leq \left|\left|\omega\right|\right|_{L^{\frac{n}{2},\infty}_f\left(\Omega\right)} \left|\left|\left<u,v\right>\right|\right|_{L^{\frac{n}{n-2},1}_f\left(\Omega\right)}.
            \end{equation*}
    
            Applying Proposition \ref{lorentz_holder} again, this time with $p = \frac{n}{n-2}$, $p_1 = \frac{2n}{n-2}$, $p_2 = \frac{2n}{n-2}$, $q = 1$, $q_1 = 2$, $q_2 = 2$, and then \eqref{lorentz_sob_extended}, we get
    
            \begin{align*}
                \left|\left|\left<u,v\right>\right|\right|_{L^{\frac{n}{n-2},1}_f\left(\Omega\right)} &\leq \left|\left|u\right|\right|_{L^{\frac{2n}{n-2},2}_f\left(\Omega\right)}\left|\left|v\right|\right|_{L^{\frac{2n}{n-2},2}_f\left(\Omega\right)}\\
                &\leq C^2_{LS}\left|\left|u\right|\right|_{W^{1,2}_f\left(\Omega\right)}\left|\left|v\right|\right|_{W^{1,2}_f\left(\Omega\right)}.
            \end{align*}
    
            Combining this with the previous step gives what we wanted.
        \end{proof}
    
        Moving forward, we will typically apply Proposition \ref{triple_prod_estim} to a bubble region associated to the point scale sequence $\left(q^k_i, s^k_i\right)$, say $\Omega = B_{g_i}\left(q^k_i, Rs^k_i\right)$ for some $R > 0$, or certain subsets of such a ball. Note that this is always possible provided we take $i \gg 1$ so the bubble scales act as sufficiently small radii.
    
        \begin{remark}\label{triple_prod_remark}
            An inspection of the proof of Propositon \ref{triple_prod_estim} tells us one can prove
    
            \begin{equation*}
                \left|\int_{B_g\left(x, \delta\right)} \omega \left<u,v\right> e^{-f}dV_g\right| \leq C\left|\left|\omega\right|\right|_{L^{\frac{n}{2},\infty}_f\left(B_g\left(x, \delta\right)\right)}\left|\left|\nabla u\right|\right|_{L^2_f\left(B_g\left(x, \delta\right)\right)}\left|\left|\nabla v\right|\right|_{L^2_f\left(B_g\left(x, \delta\right)\right)}
            \end{equation*}
    
            for $u, v \in C^\infty_c\left(B_g\left(x, \delta\right)\right)$ by using \eqref{lorentz_sob_cc} in place of \eqref{lorentz_sob_extended}. Here $C > 0$ is a constant depending only on $n, \underline{\mu}, r$.
        \end{remark}
    \subsection{The Weighted Eigenvalue Problem}\label{weighted_problem}
        Before starting the proof of the upper semi-continuity result, we need to define an appropriate weighted eigenvalue problem. This is needed as eigenvalues may concentrate around an orbifold point and we will blow-up to analyze the corresponding eigenvalue problem on the bubble. However, to make sure this is meaningful, we need to modify \eqref{eigenvalue_weak} so the left and right hand sides of it have the same scaling, hence the introduction of a weight. To this end, let $S > 0$, $i \gg 1$, and $0 < \delta \ll 1$ be such that, for each $k = 1,\dots,T$, we have $S s^k_i < \delta$, where $\left(q^k_i,s^k_i\right)$ is a point-scale sequence as in Section \ref{bubble_tree_prelims}. Then define
    
        \begin{equation*}
            \omega_{i,S,\delta,k}\left(x\right) := \begin{cases}
                \max \left\{\delta^{-2}, d^{-2}_{g_i}\left(x, q^k_i\right)\right\},~~&x \in M_i \backslash B_{g_i}\left(q^k_i, Ss^k_i\right)\\
                \left(S s^k_i\right)^{-2},~~&x \in B_{g_i}\left(q^k_i,Ss^k_i\right).
            \end{cases}
        \end{equation*}
    
        Our \emph{weight function} is then
    
        \begin{equation*}
            \omega_{i,S,\delta}\left(x\right) := \max_{k = 1, \dots, T}\omega_{i,S,\delta,k}\left(x\right),
        \end{equation*}
    
        where $T$ is the number of bubbles that form. One can deduce that, on $M_\infty \backslash \mathcal{Q}$, we have
    
        \begin{equation}\label{limiting_body_weight}
            \omega_{i,S,\delta}\left(x\right) \rightarrow \omega_{\infty,\delta}\left(x\right) := \max \left\{\delta^{-2}, d^{-2}_{g_\infty}\left(x,\mathcal{Q}\right)\right\}
        \end{equation}
    
        as $i \rightarrow \infty$. Here the convergence is in the $W^{1,\infty}_{\mathrm{loc}}\left(M_\infty \backslash \mathcal{Q}\right)$-sense. Also, for this section and the rest of the paper, we set $K_i := B_{g_i}\left(p_i, r_{\mathrm{orb}}\right)$, with $r_{\mathrm{orb}}$ as defined at the end of Section \ref{preliminaries}. In particular, we have $\mathcal{Q} \subset K_\infty$, thus $\sup_{M_\infty \backslash K_\infty} \left|\mathrm{Rm}_{g_\infty}\right| < \infty$ and likewise on $M_i \backslash K_i$. We now show that $\omega_{i,S,\delta} \in L^{\frac{n}{2},\infty}_{f_i}\left(K_i\right)$ and $\omega_{\infty,\delta} \in L^{\frac{n}{2},\infty}_{f_\infty}\left(K_\infty\right)$. This is due to the following lemma and its proof, which is almost exactly as in Section $2.1$ of \cite{workman_semicontinuity}, so we only mention the needed modifications.
    
        \begin{lemma}\label{weight_lorentz_bound}
            For $n \geq 4$, assume $\left(M_i, g_i, f_i\right)$ is a sequence of $n$-dimensional complete connected gradient Ricci shrinkers satisfying $\left(\mathcal{A}\right)$. Then
    
            \begin{equation*}
                \left|\left|\omega_{i,S,\delta}\right|\right|_{L^{\frac{n}{2},\infty}_{f_i}\left(K_i\right)} \leq C\left(R_0,n,\underline{\mu},\delta,S,T\right),
            \end{equation*}
    
            where $\delta \in \left[0,\min\left\{1, \frac{\mathrm{inj}\left(M_i\right)}{2}\right\}\right)$.
        \end{lemma}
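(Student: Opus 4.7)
The plan is to unpack the definition of the weak $L^{n/2}$ quasi-norm and show that the weighted distribution function $\mu_{f_i}(A_t) := \int_{A_t} e^{-f_i}\, dV_{g_i}$, where $A_t := \{x \in K_i : \omega_{i,S,\delta}(x) > t\}$, satisfies $\mu_{f_i}(A_t) \leq C\, t^{-n/2}$ for a constant $C$ with the stated dependencies. Since $\|\omega_{i,S,\delta}\|_{L^{n/2,\infty}_{f_i}(K_i)}$ is equivalent to $\sup_{t>0} t\,\mu_{f_i}(A_t)^{2/n}$, such a bound is exactly what is required. The argument splits naturally into two regimes according to whether $t$ is above or below the floor value $\delta^{-2}$ built into the weight.

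In the regime $t < \delta^{-2}$, I would observe that by construction $\omega_{i,S,\delta,k}(x) \geq \delta^{-2}$ for every $x \in M_i$ and every index $k$, hence $\omega_{i,S,\delta}(x) \geq \delta^{-2} > t$, so $A_t = K_i$. The normalization $(4\pi)^{-n/2}\int_{M_i} e^{-f_i}\, dV_{g_i} = 1$ then gives $\mu_{f_i}(A_t) \leq (4\pi)^{n/2}$, and hence $t\,\mu_{f_i}(A_t)^{2/n} \leq 4\pi/\delta^2$. In the complementary regime $t \geq \delta^{-2}$, a direct case analysis of the piecewise definition of $\omega_{i,S,\delta,k}$ shows that $\{\omega_{i,S,\delta,k} > t\}$ is either empty (precisely when $t \geq (S s^k_i)^{-2}$) or contained in $B_{g_i}(q^k_i, t^{-1/2})$. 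Taking the union over $k = 1, \dots, T$ yields $A_t \subseteq \bigcup_{k=1}^{T} B_{g_i}(q^k_i, t^{-1/2})$.

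The main step, which I expect to be the principal obstacle, is obtaining a uniform Euclidean-type upper bound $\mu_{f_i}(B_{g_i}(q^k_i, r)) \leq C r^n$ for all sufficiently small $r$ and every $q^k_i \in K_i$, with $C$ depending only on $R_0, n, \underline{\mu}$. The difficulty is that the $q^k_i$ need not be minima of $f_i$, so \eqref{grs_upper_volume} cannot be invoked directly. The resolution will be to combine two ingredients: the shrinker equation gives $\mathrm{Ric}(g_i) + \nabla^2 f_i = \tfrac{1}{2} g_i$, so the $\infty$-Bakry--\'Emery Ricci tensor is bounded below by $\tfrac{1}{2} g_i > 0$; and on the fixed-radius region $K_i = B_{g_i}(p_i, r_{\mathrm{orb}})$ both $f_i$ and $|\nabla f_i|$ are uniformly bounded by a constant depending only on $R_0, n, \underline{\mu}$, as follows from \eqref{grs_auxiliary}, \eqref{potential_growth}, and $R_{g_i} \leq R_0$. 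A weighted Bishop--Gromov comparison (valid under such a local bound on $|\nabla f_i|$) then gives the desired Euclidean upper volume bound for small balls centered anywhere in $K_i$. Combining with the inclusion of the previous paragraph,
$$\mu_{f_i}(A_t) \leq \sum_{k=1}^{T} \mu_{f_i}\bigl(B_{g_i}(q^k_i, t^{-1/2})\bigr) \leq T C\, t^{-n/2}$$
for $t \geq \delta^{-2}$, so $t\,\mu_{f_i}(A_t)^{2/n} \leq (TC)^{2/n}$. Taking the supremum over $t > 0$ across both regimes yields the claim with a constant depending only on $R_0, n, \underline{\mu}, \delta, S, T$, as required.
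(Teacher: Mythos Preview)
Your proposal is correct and follows essentially the same approach as the paper: compute the distribution function of $\omega_{i,S,\delta}$ on $K_i$, reduce to a Euclidean-type volume bound for small balls centered at the $q^k_i$, and sum over the $T$ point-scale sequences. The paper's proof is terser---it cites Workman's computation and invokes the comparability of $e^{-f_i}dV_{g_i}$ with $dV_{g_i}$ on $K_i$ together with \eqref{grs_upper_volume}---whereas you explicitly justify the volume bound for balls not centered at $p_i$ via Bakry--\'Emery comparison using $\mathrm{Ric}_{f_i}=\tfrac{1}{2}g_i$ and the uniform bound on $|\nabla f_i|$ over $K_i$; this fills in a detail the paper leaves implicit but does not change the strategy.
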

    
        \begin{proof}
            First consider $u_k := \omega_{i,S,\delta,k}$. One can compute $\alpha_{u_k, K_i}\left(s\right)$ and $u^\ast_{k,K_i}\left(t\right)$ directly just like in Section $2$ of \cite{workman_semicontinuity}. These quantities can then be bounded using the comparability of the measures $e^{-f_i}dV_{g_i}$ and $dV_{g_i}$ on $K_i$ followed by the Euclidean volume growth of $K_i$, which is due to \eqref{grs_upper_volume}. Summing over $k$ and and using $\sum_{q \in \mathcal{Q}} N_q = T < \infty$ completes the proof.
        \end{proof}
    
        \begin{remark}\label{distance_lorentz}
            The proof of Lemma \ref{weight_lorentz_bound} can also be adapted to show $\omega_{\infty,\delta} \in L^{\frac{n}{2},\infty}_{f_\infty}\left(K_\infty\right)$. Note that this really says that $d^{-2}_g\left(\cdot,\mathcal{Q}\right) \in L^{\frac{n}{2},\infty}_{f_\infty}\left(K_\infty\right)$ and carries over to our ALE bubbles. Alternatively, one can use Lemma \ref{weight_lorentz_bound} and appeal to the lower semi-continuity of norms.
        \end{remark}
    
        We can also relate the Riemann curvature and the weight function.
    
        \begin{lemma}\label{rm_wgt_ineq}
            For $n \geq 4$, assume $\left(M_i, g_i, f_i\right)$ is a sequence of $n$-dimensional complete connected gradient Ricci shrinkers satisfying $\left(\mathcal{A}\right)$. Then, for $i \gg 1$ and $\delta \ll 1$, we have
    
            \begin{equation*}
                \left|\mathrm{Rm}_{g_i}\right|\left(x\right) \leq C\omega_{i,S,\delta}\left(x\right)
            \end{equation*} 
    
            for all $x \in M_i$, where $C = C\left(R_0,S,\delta\right) > 0$. The same result holds for the limiting weight on the orbifold shrinker limit $\left(M_\infty, g_\infty, f_\infty\right)$:
    
            \begin{equation*}
                \left|\mathrm{Rm}_{g_\infty}\right|\left(x\right) \leq C'\omega_{\infty,S,\delta}\left(x\right)
            \end{equation*}
    
            for all $x \in M_\infty$, where $C' = C'\left(R_0,S,\delta, \mathcal{Q}\right) > 0$.
        \end{lemma}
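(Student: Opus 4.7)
The plan is to bound $|\mathrm{Rm}_{g_i}|$ separately on each piece of a natural decomposition of $M_i$, matching in each region the lower bound on $\omega_{i,S,\delta}$ that is available there. The decomposition I would use has three pieces: (i) the complement of $K_i$, (ii) the interior of each bubble ball $B_{g_i}(q^k_i, Ss^k_i)$, and (iii) the remainder of $K_i$, graded by the distance to the nearest $q^k_i$. On (i), the trivial asymptotic bound $|\mathrm{Rm}_{g_i}| \leq R_0$ from $(\mathcal{A})$ suffices, since $\omega_{i,S,\delta} \geq \delta^{-2}$ globally, so $R_0\delta^2$ works as the constant.

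For (ii), I would pick the smallest scale $s^k_i$ with $x \in B_{g_i}(q^k_i, Ss^k_i)$ and invoke the pointed orbifold Cheeger--Gromov convergence $(M_i, (s^k_i)^{-2}g_i, q^k_i) \to (V^k, h^k, q^k_\infty)$ recalled in Section \ref{bubble_tree_prelims}. This gives a uniform bound on the rescaled curvature on the portion of $B_{\widetilde{g}^k_i}(q^k_i, S)$ that is separated from the orbifold points of $V^k$; unscaling yields $|\mathrm{Rm}_{g_i}|(x) \lesssim (s^k_i)^{-2}$, which is absorbed into $(Ss^k_i)^{-2} \leq \omega_{i,S,\delta}(x)$. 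Sub-bubble regions inside this ball, where the rescaled bubble carries its own orbifold points, correspond to smaller-scale bubble centers $q^j_i$ on $M_i$, and are handled by the analysis of case (iii) applied to the distance $d_{g_i}(x, q^j_i)$, together with $\omega_{i,S,\delta,j}(x) \geq d_{g_i}(x, q^j_i)^{-2}$ contributing to the max in $\omega_{i,S,\delta}$.

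For (iii), writing $d(x) := d_{g_i}(x, \{q^k_i\})$ so that $\omega_{i,S,\delta}(x) = \max\{\delta^{-2}, d(x)^{-2}\}$, the plan is to apply the $\varepsilon$-regularity result Lemma \ref{hm_eps_reg} on the ball $B_{g_i}(x, d(x)/4)$, which is disjoint from every $q^k_i$. The globally bounded curvature of the orbifold limit (Remark \ref{orbifold_remark}), the Euclidean volume growth \eqref{grs_upper_volume}, and the $C^\infty_{\mathrm{loc}}$-convergence away from $\mathcal{Q}$ combine to give $\|\mathrm{Rm}_{g_i}\|_{L^{n/2}(B_{g_i}(x, d(x)/4))} \lesssim d(x)^2 \leq \delta^2$ for $i \gg 1$; for $\delta$ small this sits below $\varepsilon_{\mathrm{reg}}$, and Lemma \ref{hm_eps_reg} then delivers $|\mathrm{Rm}_{g_i}|(x) \lesssim d(x)^{-2} \leq \omega_{i,S,\delta}(x)$. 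The statement on $(M_\infty, g_\infty, f_\infty)$ follows by running exactly the same scheme directly on $M_\infty$, where nothing is even to be rescaled. The principal obstacle I expect is closing case (ii) across nested bubble scales: this is controlled by the hypothesis $\sum_{q \in \mathcal{Q}} N_q = T < \infty$, which forces the recursion to terminate, together with the base-point independence of the orbifold energy estimate $\|\mathrm{Rm}_{g_\infty}\|_{L^{n/2}(B(r))} \lesssim r^2$ that makes the $\varepsilon$-regularity step uniform in $x$ and $i$.
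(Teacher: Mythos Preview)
Your direct case-analysis approach is close in spirit to the paper's, which instead argues by contradiction (assume $|\mathrm{Rm}_{g_i}|(x_i)/\omega_{i,S,\delta}(x_i) \to \infty$ along some sequence $x_i$, then rule out each place $x_i$ could accumulate). The paper's cases---body region, leaf bubble region $B_{g_i}(q^k_i, Rs^k_i)$, intermediate bubble region with children excised, and neck region---map roughly onto your pieces (i)--(iii), but there is one substantive mismatch.

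The gap is in your case (iii). The ingredients you invoke---bounded curvature of $g_\infty$ plus $C^\infty_{\mathrm{loc}}$ convergence away from $\mathcal{Q}$---only control $\|\mathrm{Rm}_{g_i}\|_{L^{n/2}(B_{g_i}(x, d(x)/4))}$ when the ball sits in the region of smooth convergence, i.e.\ when $d(x)$ is bounded below. But your region (iii) also contains the neck annuli $A^{g_i}_{Ss^k_i,\,\rho}(q^k_i)$, along which $d(x) \to 0$ and the balls $B_{g_i}(x, d(x)/4)$ collapse toward $q \in \mathcal{Q}$ under only Gromov--Hausdorff convergence. Your claimed bound $\|\mathrm{Rm}_{g_i}\|_{L^{n/2}(B_{g_i}(x, d(x)/4))} \lesssim d(x)^2$ therefore fails there: for instance at $d(x) \sim Ss^k_i$ this $L^{n/2}$ norm equals the scale-invariant tail energy of the ALE bubble at rescaled radius $\sim S$, which is of order one, not $d(x)^2$, and need not lie below $\varepsilon_{\mathrm{reg}}$. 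The paper fills this by explicitly separating the neck case and invoking Claim~5.5 of \cite{by_bubbling} (the neck energy tends to zero as $i, R \to \infty$), which then feeds into Lemma~\ref{hm_eps_reg}; the near-bubble portion of your region (iii) it instead absorbs into the bubble case, where the bound $\sup_{B_{g_i}(q^k_i, Rs^k_i)} |\mathrm{Rm}_{g_i}| = (s^k_i)^{-2}$ is used directly rather than via $\varepsilon$-regularity. To close your argument you need to invoke the same neck energy fact and split off the near-bubble annulus $A^{g_i}_{Ss^k_i,\,Rs^k_i}(q^k_i)$ accordingly.
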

    
        \begin{proof}
            Assume for a contradiction that the desired result does not hold. Then there is a sequence of points $x_i \in M_i$ such that, as $i \rightarrow \infty$, 
    
            \begin{equation}\label{rm_wgt_contra}
                \frac{\left|\mathrm{Rm}_{g_i}\right|\left(x_i\right)}{\omega_{i,S,\delta}\left(x_i\right)} \rightarrow \infty.
            \end{equation}
    
            If $x_\infty \in M_\infty \backslash \mathcal{Q}$ then for $i \gg 1$ we would have $\omega_{i,S,\delta}\left(x_i\right) \geq \max\left\{\left(2\delta\right)^{-2},\left(2d_{g_\infty}\left(x_\infty,\mathcal{Q}\right)\right)^{-2}\right\}$, but in such a case \eqref{rm_wgt_contra} contradicts the body region having bounded curvature. 
    
            Next, we consider when $x_i$ accumulates in a bubble region. Consider first the case of a leaf bubble region, say $B_{g_i}\left(q^1_i, Rs^1_i\right)$. Then, since $\sup_{B_{g_i}\left(q^1_i, Rs^1_i\right)} \left|\mathrm{Rm}_{g_i}\right|\left(x_i\right) = \left(s^1_i\right)^{-2}$,
    
            \begin{equation*}
                \frac{\left|\mathrm{Rm}_{g_i}\right|\left(x_i\right)}{\omega_{i,S,\delta}\left(x_i\right)} \leq 2\max\left\{S,R\right\}^2 \left(s^1_i\right)^2 \sup \limits_{B_{g_i}\left(q^1_i, Rs^1_i\right)} \left|\mathrm{Rm}_{g_i}\right|\left(x_i\right) = 2\max\left\{S,R\right\}^2.
            \end{equation*}
    
            \sloppy This then contradicts \eqref{rm_wgt_contra} and the argument can be repeated for each leaf bubble region. Now assume $x_i$ lies in an intermediate bubble region with child bubbles excised, say $B_{g_i}\left(q^2_i,Rs^2_i\right) \backslash B_{g_i}\left(q^1_i, \frac{1}{R}s^1_i\right)$ for simplicity. Then we can apply the previous argument to get a contradiction and proceed similarly on every other intermediate bubble region after excising child bubbles appropriately.
    
            Finally, we consider the case when $x_i$ accumulates in a neck region $\mathcal{N}_{i,R}\left(q_i\right)$. Note then that this means for $i \gg 1$ we have $x_i \in B_{g_i}\left(p_i, r_{\mathrm{orb}} + 2\right)$. The precise bubble scales involved in the neck region here do not influence the proof, so we suppress them for simplicity. We now need to handle $2$ subcases. First, when $x_i$ is such that $\omega_{i,S,\delta}\left(x_i\right) = d^{-2}_{g_i}\left(x_i,q_i\right)$ for $i \gg 1$. In such a situation, we have
    
            \begin{equation*}
                \frac{\left|\mathrm{Rm}_{g_i}\right|\left(x_i\right)}{\omega_{i,S,\delta}\left(x_i\right)} = d^2_{g_i}\left(x_i,q_i\right) \left|\mathrm{Rm}_{g_i}\right|\left(x_i\right).
            \end{equation*}
    
            Also, by Claim $5.5$ in \cite{by_bubbling} we know that the energy of the neck region $\mathcal{N}_{i,R}\left(q_i\right)$ must go to $0$ as $i, R \rightarrow \infty$. Moreover, we note $r_{\mathrm{orb}}$ uniformly depends on only $R_0, n, \underline{\mu}$. We can thus apply Lemma \ref{hm_eps_reg} with a uniform choice for $r$. As discussed earlier, this means the constants in the statement of Lemma \ref{hm_eps_reg} are uniform. Therefore, after possibly passing to a subsequence, we get 
    
            \begin{equation*}
                d^2_{g_i}\left(x_i,q_i\right) \left|\mathrm{Rm}_{g_i}\right|\left(x_i\right) \leq K \left|\left|\mathrm{Rm}_{g_i}\right|\right|_{L^{\frac{n}{2}}\left(\mathcal{N}_i\left(q_i\right)\right)} \rightarrow 0
            \end{equation*}
    
            as $i, R \rightarrow \infty$, which contradicts \eqref{rm_wgt_contra}. The second subcase is when $\omega_{i,S,\delta}\left(x_i\right) = \left(Ss^k_i\right)^{-2}$. We can proceed by using Claim $5.7$ from \cite{by_bubbling} to control the bubble scale in terms of the outer radius of the neck region and then appealing to Lemma \ref{hm_eps_reg} again to derive a contradiction.
    
            As for the limiting orbifold, since these are singularities of conical type we have $\left|\mathrm{Rm}_{g_\infty}\right|\left(x\right) \leq C_q d^{-2}_{g_\infty}\left(x,q\right)$ in neighborhoods of each $q \in \mathcal{Q}$. Combining this with \eqref{limiting_body_weight} and the logic of the previous arguments we have the desired result. 
        \end{proof}
    
        \begin{remark}\label{bubble_adjustment}
            An inspection of the proofs of Proposition \ref{triple_prod_estim}, Lemma \ref{weight_lorentz_bound}, and Lemma \ref{rm_wgt_ineq} shows they hold on (compact subsets of) ALE bubbles after replacing the $f$-weighted Sobolev spaces with the usual ones. We also note that Lemma \ref{rm_wgt_ineq} holds on cones or manifolds with general conical singularities since it relies on the conical, rather than orbifold, structure of the singular set.
        \end{remark}
    
        Using the weight function $\omega_{i,S,\delta}$, we define the following weighted eigenvalue problem:
    
        \begin{equation}\label{weighted_eigenvalue_weak}
            \mathcal{B}_f\left[u_i,\varphi\right] = \lambda_i \int_M \omega_{i,S,\delta} \left<u_i, \varphi\right>~e^{-f_i} dV_{g_i}
        \end{equation}
    
        for $u_i, \varphi \in W^{1,2}_{f_i}\left(M_i\right)$. We will often write the right hand side of \eqref{weighted_eigenvalue_weak} as a weighted $L^2_{f_i}$-inner product:
    
        \begin{equation}\label{weighted_inner_product}
            \left<u_i,\varphi\right>_{\omega_{i,S,\delta},L^2_{f_i}\left(M_i\right)} := \int_{M_i} \omega_{i,S,\delta} \left<u_i,\varphi\right>~e^{-f_i} dV_{g_i}.
        \end{equation}
    
        Note that this inner product is well-defined by Proposition \ref{triple_prod_estim} and $\omega_{i,S,\delta} \in L^\infty\left(M_i \backslash K_i\right)$. Versions of the weighted problem and inner product for the orbifold limit and the bubbles are defined analogously using \eqref{limiting_body_weight}.
    
        We define the weighted eigenspaces as
    
        \begin{equation*}
            \mathcal{E}_{\omega_{i,S,\delta}}\left(\lambda_i; L_{f_i}, A, B\right) := \left\{u \in A : \mathcal{B}_f\left[u,\varphi\right] = \lambda_i \left<u_i,\varphi\right>_{\omega_{i,S,\delta},L^2_{f_i}\left(M_i\right)}~~\mathrm{for~all}~ \varphi \in B\right\}
        \end{equation*}
    
        where $A$ and $B$ are function spaces. When $A = B$ we set
    
        \begin{equation*}
            \mathcal{E}_{\omega_{i,S,\delta}}\left(\lambda_i; L_{f_i}, A\right) := \mathcal{E}_{\omega_{i,S,\delta}}\left(\lambda_i; L_{f_i}, A, A\right).
        \end{equation*}
    
        The weighted f-index and weighted f-nullity are defined as
    
        \begin{align*}
            \mathrm{Ind}_{f_i,\omega_{i,S,\delta}}\left(M_i\right) &:= \sum_{\lambda_i < 0} \mathrm{dim}\left(\mathcal{E}_{\omega_{i,S,\delta}}\left(\lambda_i; L_{f_i}, W^{1,2}_{f_i}\left(M_i\right)\right)\right),\\
            \mathrm{Null}_{f_i,\omega_{i,S,\delta}}\left(M_i\right) &:= \mathrm{dim}\left(\mathcal{E}_{\omega_{i,S,\delta}}\left(0; L_{f_i}, W^{1,2}_{f_i}\left(M_i\right)\right)\right).
        \end{align*}
    
        The weighted eigenspaces on the limiting orbifold shrinker $\left(M_\infty,g_\infty,f_\infty\right)$ with stability operator $L_{f_\infty}$, and the corresponding weighted f-index and weighted f-nullity, are all defined analogously.
    \subsection{Body Region Analysis}\label{body_upper}
        We now turn to analyzing the convergence of our eigenvalues/modes on the body region $M_\infty \backslash \mathcal{Q}$. Consider a sequence of symmetric $2$-tensors $u_i \in W^{1,2}_{f_i}\left(M_i\right)$, which solve \eqref{weighted_eigenvalue_weak} with $\lambda_i \leq 0$. We also normalize these eigenmodes so that, for each $i$,
        
        \begin{equation}\label{body_eigenmode_normal}
            \int_{M_i} \omega_{i,S,\delta} \left|u_i\right|^2~e^{-f_i} dV_{g_i} = 1.
        \end{equation}
        
        Using Lemma \ref{rm_wgt_ineq}, \eqref{weighted_eigenvalue_weak}, and \eqref{body_eigenmode_normal}, we can guarantee the existence of a positive constant $C$ such that $\lambda_i \geq -C\left(R_0,S,\delta\right)$. Next, using Lemma \ref{rm_wgt_ineq}, \eqref{weighted_eigenvalue_weak}, and \eqref{body_eigenmode_normal} again, as well as $\lambda_i \leq 0$, yields
    
        \begin{align*}
            \int_{M_i} \left|\nabla u_i\right|^2 e^{-f_i}dV_{g_i} &= \lambda_i \int_{M_i} \omega_{i,S,\delta} \left|u_i\right|^2~e^{-f_i}dV_{g_i} + \int_{M_i} 2\left<\mathrm{Rm}_{g_i} \ast u_i, u_i\right> e^{-f_i} dV_{g_i}\\
            &\leq C\left(R_0,S,\delta\right) \int_{M_i} \omega_{i,S,\delta} \left|u_i\right|^2 e^{-f_i} dV_{g_i}\\
            &\leq C\left(R_0,S,\delta\right).
        \end{align*}
    
        Furthermore, by the definition of the weight function $\omega_{i,S,\delta}\left(x\right) \geq \delta^{-2}$ for all $x \in M_i$. We therefore have
    
        \begin{equation*}
            \delta^{-2}\left|\left|u_i\right|\right|^2_{L^2_{f_i}\left(M_i\right)} \leq \int_{M_i} \omega_{i,S,\delta} \left|u_i\right|^2~e^{-f_i} dV_{g_i} \leq 1.
        \end{equation*}
    
        Rearranging this yields
        
        \begin{equation*}
            \left|\left|u_i\right|\right|_{L^2_{f_i}\left(M_i\right)} \leq \delta.
        \end{equation*}
    
        Putting everything together we see that, possibly after passing to a subsequence, we can assume $\lambda_i \rightarrow \lambda_\infty \leq 0$ and we have
    
        \begin{equation*}
            \left|\left|u_i\right|\right|_{W^{1,2}_{f_i}\left(M_i\right)} \leq C\left(R_0, S, \delta\right).
        \end{equation*}
    
        Moreover, away from bubble regions, by Proposition \ref{weighted_sobolev_compact} we can guarantee the existence of a symmetric $2$-tensor $u_\infty \in W^{1,2}_{f_\infty}\left(M_\infty \backslash \mathcal{Q}\right)$ such that
    
        \begin{equation*}
            \begin{cases}
                u_i \rightharpoonup u_\infty~~\mathrm{in}~W^{1,2}_{f_\infty}\left(M_\infty \backslash \mathcal{Q}\right)\\
                u_i \rightarrow u_\infty~~\mathrm{in}~L^2_{f_\infty}\left(M_\infty \backslash \mathcal{Q}\right).
            \end{cases}
        \end{equation*}
    
        This limiting eigenmode $u_\infty$ can be extended over the singular set $\mathcal{Q}$ in two ways. First, one can appeal to the uniform bounds we derived above and the lower semi-continuity of norms under weak convergence. Alternatively, we can use a $2$-capacity argument since $\mathcal{H}^{n-2}\left(\mathcal{Q}\right) = 0$. The argument seems to be standard, albeit with a couple different variations. Here, for instance, one can refer to the proof of Proposition $3.14$ in \cite{dai_honda_pan_wei}. Note that this also yields $\left|\left|u_\infty\right|\right|_{W^{1,2}_{f_\infty}\left(M_\infty\right)} \leq C\left(R_0,S,\delta\right)$ and tells us $W^{1,2}_{f_\infty}\left(M_\infty\right)$ coincides with the $W^{1,2}_{f_\infty}$-closure of $C^\infty_c\left(M_\infty \backslash \mathcal{Q}\right)$.
    
        Now we need to show that \eqref{weighted_eigenvalue_weak} holds on $M_\infty$. First consider \eqref{weighted_eigenvalue_weak} on $K_\infty$. Let $\Omega \subset \subset K_\infty \backslash \mathcal{Q}$. Since $\omega_{i,S,\delta} \rightarrow \omega_{\infty,\delta}$ in $W^{1,\infty}\left(\Omega\right)$, we see that \eqref{weighted_eigenvalue_weak} holds for all $\varphi \in C^\infty_c\left(\Omega\right)$. Standard elliptic regularity theory for linear systems (for instance, a slight adjustment to the proof Theorem $4.9$ in \cite{giaquinta} using $e^{-f} dV_g$ is equivalent to $dV_g$ on compact sets) and the fundamental theorem of the calculus of variations tells us $u_\infty \in W^{2,2}_{f_\infty}\left(\Omega\right)$ and $L_{f_\infty} u_\infty + \lambda_\infty u_\infty \omega_{\infty,\delta} = 0$ almost everywhere on $\Omega$. This implies \eqref{weighted_eigenvalue_weak} holds for all $\varphi \in C^\infty_c\left(K_\infty \backslash \mathcal{Q}\right)$. 
    
        To handle $M_\infty \backslash K_\infty$, we can proceed similarly to before and note that, at first, $u_\infty \in W^{2,2}_{f_\infty, \mathrm{loc}}\left(M_\infty \backslash K_\infty\right)$. We next turn this in to a uniform bound on all of $M_\infty \backslash K_\infty$. To do this, we consider the cut-off function $\psi^r$ which is defined as follows for $r > 2r_{\mathrm{orb}}$:
    
        \begin{equation*}
            \psi^r\left(x\right) := 
            \begin{cases}
                \chi\left(x\right),&~~\mathrm{on}~ A^{g_\infty}_{r_\mathrm{orb},2r_{\mathrm{orb}}}\left(p_\infty\right)\\
                1,&~~\mathrm{on}~ A^{g_\infty}_{2r_{\mathrm{orb}},r}\left(p_\infty\right)\\
                \varphi^r\left(x\right),&~~\mathrm{on}~ A^{g_\infty}_{r,2r}\left(p_\infty\right).
            \end{cases}
        \end{equation*} 
    
        Here $\chi$ is a smooth function vanishing on $K_\infty$ and identically $1$ on $A^{g_\infty}_{2r_{\mathrm{orb}},r}\left(p_\infty\right)$ such that $\left|\nabla \chi\right| + \left|\Delta \chi\right| \leq C$ for a universal constant $C > 0$, while $\varphi^r$ is the cut-off function from Section $3$ of \cite{li_zhang_weighted_estimates}. This latter function is such that $\left|\nabla \varphi^r\right| \leq C'r^{-1}$ and $\left|\Delta_f \varphi^r\right| \leq C'$ on $M_\infty \backslash K_\infty$ for a universal constant $C' > 0$. Since $u_\infty \in L^2_{f_\infty}\left(M_\infty\right)$ and $\mathrm{Rm}_{g_\infty}, \omega_{\infty,\delta} \in L^\infty\left(M_\infty \backslash K_\infty\right)$ we can consider $\widetilde{u}_\infty := \psi^r u_\infty \in L^2_{f_\infty}\left(M_\infty\right)$. Then we can compute $\Delta_f \widetilde{u}_\infty$ and use the triangle inequality to deduce that, after sending $r \rightarrow \infty$,
    
        \begin{align*}
            \left|\left|\Delta_f \widetilde{u}_\infty\right|\right|_{L^2_{f_\infty}\left(M_\infty\right)} &\leq C\left(n,R_0,\underline{\mu}\right)\left[\left(1+ \left|\lambda_\infty\right| \sup_{M_\infty \backslash K_\infty} \omega_{\infty,\delta}\right)\left|\left|u_\infty\right|\right|_{L^2_{f_\infty}\left(M_\infty \backslash K_\infty\right)} + \left|\left|\nabla u_\infty\right|\right|_{L^2_{f_\infty}\left(M_\infty \backslash K_\infty\right)}\right]\\
            &\leq C\left(n,R_0,\underline{\mu},\delta\right)\left|\left|u_\infty\right|\right|_{W^{1,2}_{f_\infty}\left(M_\infty\right)}
        \end{align*}
    
        Note we have also used the bound $\left|\lambda_\infty\right| \leq C\left(R_0,S,\delta\right)$. Therefore $\Delta_f \widetilde{u}_\infty \in L^2_{f_\infty}\left(M_\infty\right)$ so we can apply elliptic regularity estimates adapted to the drift Laplacian $\Delta_f$, in particular Proposition $3.7$ from \cite{li_zhang_weighted_estimates} (since, again, we have bounded curvature on $M_\infty \backslash K_\infty$ and $\widetilde{u}_\infty = 0$ on $K_\infty$). This yields a uniform $W^{2,2}_{f_\infty}$-bound on $u_\infty$:
    
        \begin{equation*}
            \left|\left|u_\infty\right|\right|_{W^{2,2}_{f_\infty}\left(M_\infty \backslash B_{g_\infty}\left(p_\infty, 2r_{\mathrm{orb}}\right)\right)} \leq \left|\left|\widetilde{u}_\infty\right|\right|_{W^{2,2}_{f_\infty}\left(M_\infty\right)} \leq C\left(n,R_0,\underline{\mu},\delta\right)\left|\left|u_\infty\right|\right|_{W^{1,2}_{f_\infty}\left(M_\infty\right)}.
        \end{equation*}
    
        A $W^{2,2}_{f_\infty}$-bound for $u_\infty$ on $B_{g_\infty}\left(p_\infty, 2r_{\mathrm{orb}}\right) \backslash K_\infty$ follows from standard local estimates as when we considered the problem on $K_\infty$. This all allows us to deduce that \eqref{weighted_eigenvalue_weak} holds for all $\varphi \in C^\infty_c\left(M_\infty \backslash K_\infty\right)$, hence for all $\varphi \in C^\infty_c\left(M_\infty \backslash \mathcal{Q}\right)$. Proposition \ref{triple_prod_estim}, Lemma \ref{weight_lorentz_bound}, and Lemma \ref{rm_wgt_ineq} then let us use a $2$-capacity argument again (this time see, for instance, Proposition $7$ in \cite{workman_semicontinuity}) to deduce that \eqref{weighted_eigenvalue_weak} holds for all $\varphi \in W^{1,2}_{f_\infty}\left(M_\infty\right)$ after also using a density argument. Therefore, $u_\infty \in \mathcal{E}_{\omega_{\infty,S,\delta}}\left(\lambda_\infty; L_\infty, W^{1,2}_{f_\infty}\left(M_\infty\right)\right)$.
    \subsection{Bubble Region Analysis}\label{bubble_upper}
        We now turn to studying the weighted eigenvalue problem on the ALE bubbles and proceed along the lines of Section \ref{bubble_tree_prelims}. The neck regions will be analyzed in the next section.
    
        We first consider when we have a leaf bubble. Let the associated point-scale sequence be $\left(q^1_i,s^1_i\right)$ with corresponding bubble region $B_{g_i}\left(q^1_i,Rs^1_i\right)$ and $R \gg 1$. Also, define the following for $S < R$:
    
        \begin{align*}
            \widetilde{g}^1_i &:= \left(s^1_i\right)^{-2} g_i,\\
            \widetilde{u}^1_i &:= \left(s^1_i\right)^{\frac{n}{2} - 3}u_i,\\
            \widetilde{\omega}^1_{i,S,\delta}\left(x\right) := \left(s^1_i\right)^2 \omega_{i,S,\delta,1}\left(x\right)
            &= \begin{cases}
                \max\left\{\left(s^1_i\right)^2 \delta^{-2}, d^{-2}_{\widetilde{g}^1_i}\left(x,q^1_i\right)\right\}, &x \in B_{\widetilde{g}^1_i}\left(q^1_i,R\right) \backslash B_{\widetilde{g}^1_i}\left(q^1_i,S\right)\\
                S^{-2}, &x \in B_{\widetilde{g}^1_i}\left(q^1_i,S\right).
            \end{cases} 
        \end{align*}
    
        Note that we only need to consider $\widetilde{\omega}^1_{i,S,\delta,1}$ since we know that we will end up with a leaf bubble, each of which are separable from all other bubbles. We have defined the rescaled quantities $\widetilde{u}^1_i$ and $\widetilde{\omega}^1_{i,S,\delta,1}$ so that the left and right hand sides of \eqref{weighted_eigenvalue_weak} have the same scaling. Importantly, after blowing up around $q^1_i$, we still end up considering an eigenvalue problem on the bubble with a similar weight.
    
        As detailed in Section \ref{bubble_tree_prelims}, $\left(\widetilde{M}^1_i, \widetilde{g}^1_i, q^1_i\right) \rightarrow \left(V^1, h^1, q^1_\infty\right)$ in the smooth pointed Cheeger--Gromov sense and $\left(V^1, h^1\right)$ is a Ricci-flat ALE manifold of order $n$. The smooth convergence tells us $\widetilde{\omega}^1_{i,S,\delta} \rightarrow \widetilde{\omega}^1_{\infty,S}$ in $W^{1,\infty}_{\mathrm{loc}}\left(V^1\right)$, where
    
        \begin{equation*}
            \widetilde{\omega}^1_{\infty,S}\left(x\right) := \begin{cases}
                d^{-2}_{h^1}\left(x,q^1_\infty\right),~~x \in V^1 \backslash B_{h^1}\left(q^1_\infty,S\right)\\
                S^{-2}, ~~ x \in B_{h^1}\left(q^1_\infty,S\right).
            \end{cases}                
        \end{equation*}
    
        Next, we observe that, for $i \gg 1$,
    
        \begin{equation*}
            \int_{B_{\widetilde{g}^1_i}\left(q^1_i,R\right)} \widetilde{\omega}^1_{i,S,\delta} \left|\widetilde{u}^1_i\right|^2~e^{-f_i} dV_{\widetilde{g}^1_i} \leq 2\int_{M_i} \omega_{i,S,\delta}\left|u_i\right|^2~e^{-f_i} dV_{g_i} = 2.
        \end{equation*}
    
        Therefore, since $\widetilde{\omega}^1_{\infty,S} > 0$ on compact sets, we can take $i \gg 1$ so that $s^1_i \ll 1$ and
    
        \begin{equation*}
            \int_{B_{\widetilde{g}^1_i}\left(q^1_i,R\right)} \left|\widetilde{u}^1_i\right|^2 e^{-f_i} dV_{\widetilde{g}^1_i} \leq \frac{2}{\min_{B_{h^1}\left(q^1_\infty,R\right)}\widetilde{\omega}^1_{\infty,S}}\int_{B_{\widetilde{g}^1_i}\left(q^1_i,R\right)} \widetilde{\omega}^1_{i,S,\delta} \left|\widetilde{u}^1_i\right|^2~e^{-f_i} dV_{\widetilde{g}^1_i} \leq C\left(S,R\right) < \infty.
        \end{equation*}
    
        Similarly, since the estimates and $2$-capacity argument from Section \ref{body_upper} tell us that $\left|\left|\nabla u_i\right|\right|_{L^2_{f_i}\left(M_i\right)} \leq C\left(R_0,\delta\right)$ on the entire body region and this persists in the limit, we have, for $i \gg 1$,
    
        \begin{equation*}
            \int_{B_{\widetilde{g}^1_i}\left(q^1_i,R\right)} \left|\nabla \widetilde{u}^1_i\right|^2 e^{-f_i} dV_{\widetilde{g}^1_i} \leq \int_{M_i} \left|\nabla u_i\right|^2 e^{-f_i}dV_{g_i} \leq C\left(S,\delta\right).
        \end{equation*}
    
        We therefore deduce that there exists some $\widetilde{u}^1_\infty \in W^{1,2}_{\mathrm{loc}}\left(V^1\right)$ such that
    
        \begin{equation*}
            \begin{cases}
                \widetilde{u}^1_i &\rightharpoonup \widetilde{u}^1_\infty~~\mathrm{in}~ W^{1,2}_{\mathrm{loc}}\left(V^1\right)\\
                \widetilde{u}^1_i &\rightarrow \widetilde{u}^1_\infty~~\mathrm{in}~L^2_{\mathrm{loc}}\left(V^1\right).
            \end{cases}
        \end{equation*}
    
        Also, we have
    
        \begin{align*}
            \int_{V^1} \left|\nabla \widetilde{u}^1_\infty\right|^2 dV_{h^1} &< \infty,\\
            \int_{V^1} \left|\widetilde{u}^1_\infty\right|^2 \widetilde{\omega}_{\infty,S} dV_{h^1} &\leq 2.
        \end{align*}
    
        This can be written a bit more succinctly as $\widetilde{u}^1_\infty \in W^{1,2}_{\widetilde{\omega}^1_{\infty,S}}\left(V^1\right)$, where $W^{1,2}_{\widetilde{\omega}^1_{\infty,S}}\left(V^1\right)$ is the closure of $C^\infty_c\left(V^1\right)$ with respect to the weighted norm
    
        \begin{equation*}
            \left|\left|\widetilde{u}^1_\infty\right|\right|_{W^{1,2}_{\widetilde{\omega}^1_{\infty,S}}\left(V^1\right)} := \left(\int_{V^1} \sum^1_{k = 0} \left(\widetilde{\omega}^1_{\infty,S}\right)^{1-k}\left|\nabla^k u\right|^2 dV_{h^1} \right)^{\frac{1}{2}}.
        \end{equation*}
    
        By the local smooth convergence on $V^1$, we have, similarly to the body region analysis,
    
        \begin{equation*}
            \int_{V^1} \left<\nabla \widetilde{u}^1_\infty, \nabla \varphi\right> - 2\left<\mathrm{Rm}_{h^1} \ast \widetilde{u}^1_\infty, \varphi\right> - \widetilde{\lambda}^1_\infty \widetilde{\omega}^1_{\infty,S}\left<\widetilde{u}^1_\infty,\varphi\right> dV_{h^1} = 0
        \end{equation*}
    
        for every $\varphi \in C^\infty_c\left(V^1\right)$. Here we have written $\widetilde{\lambda}^1_\infty$ to denote the eigenvalue corresponding to the eigenmode $\widetilde{u}^1_\infty$. Note that, by the analysis in Section \ref{body_upper}, $\left|\widetilde{\lambda}^1_\infty\right| \leq C\left(R_0,S,\delta\right)$.
    
        Next, since $\left|\mathrm{Rm}_{h^1}\right|_{h^1}\left(x\right) \leq Cd^{-n-2}_{h^1}\left(x,q^1_\infty\right)$ along the ALE end of $V^1$, we may pick $S \gg 1$ (and increase $R$ accordingly) so that for every $x \in V^1 \backslash B_{h^1}\left(q^1_\infty,2S\right)$ we have $\left|\mathrm{Rm}_{h^1}\right|\left(x\right) < Cd^{-2}_{h^1}\left(x,q^1_\infty\right) \leq C'\widetilde{\omega}^1_{\infty,S}\left(x\right)$. Next, since $V^1$ is smooth, there is a constant $C'' > 0$, depending only on $S$ and $h^1$, such that $\left|\mathrm{Rm}_{h^1}\right|\left(x\right) \leq C'' \widetilde{\omega}^1_{\infty,S}\left(x\right)$ for all $x \in V^1$. We thus have a constant $C''' > 0$ which can change from line to line and depends only on $h^1, n, \underline{\mu}, R_0, S$ such that
    
        \begin{align}
            &\left|\int_{V^1} \left<\nabla \widetilde{u}^1_\infty, \nabla \varphi\right> - 2\left<\mathrm{Rm}_{h^1} \ast \widetilde{u}^1_\infty, \varphi\right> - \widetilde{\lambda}^1_\infty \widetilde{\omega}^1_{\infty,S}\left<\widetilde{u}^1_\infty,\varphi\right> dV_{h^1}\right| \nonumber\\
            &\leq \left|\left|\nabla \widetilde{u}^1_\infty\right|\right|_{L^2\left(V^1\right)}\left|\left|\nabla \varphi\right|\right|_{L^2\left(V^1\right)} + C''\left(1 + \left|\widetilde{\lambda}^1_\infty\right|\right)\int_{V^1} \widetilde{\omega}^1_{\infty,S}\left|\widetilde{u}^1_\infty\right|\left|\varphi\right| dV_{h^1} \nonumber\\
            &\leq \left|\left|\nabla \widetilde{u}^1_\infty\right|\right|_{L^2\left(V^1\right)}\left|\left|\nabla \varphi\right|\right|_{L^2\left(V^1\right)} + C'''\left(\int_{V^1}\widetilde{\omega}^1_{\infty,S} \left|\widetilde{u}^1_\infty\right|^2 dV_{h^1}\right)^{\frac{1}{2}} \left(\int_{V^1}\widetilde{\omega}^1_{\infty,S}\left|\varphi\right|^2 dV_{h^1}\right)^{\frac{1}{2}} \label{bubble_density_estim}\\
            &\leq \left|\left|\nabla \widetilde{u}^1_\infty\right|\right|_{L^2\left(V^1\right)}\left|\left|\nabla \varphi\right|\right|_{L^2\left(V^1\right)}\nonumber\\
            &+ C'''\left(\left|\left|\widetilde{\omega}^1_{\infty,S}\right|\right|_{L^\infty\left(V^1 \backslash B_{h^1}\left(q^1_\infty,\varepsilon\right)\right)}\left|\left|\varphi\right|\right|^2_{L^2\left(V^1 \backslash B_{h^1}\left(q^1_\infty,\varepsilon\right)\right)} + \left|\left|\widetilde{\omega}^1_{\infty,S}\right|\right|_{L^{\frac{n}{2},\infty}\left(B_{h^1}\left(q^1_\infty,\varepsilon\right)\right)}\left|\left|\varphi\right|\right|^2_{W^{1,2}\left(B_{h^1}\left(q^1_\infty,\varepsilon\right)\right)}\right)^{\frac{1}{2}}.\nonumber
        \end{align}
    
        for $0 < \varepsilon \ll 1$ so that we can apply Proposition \ref{triple_prod_estim}, the proof of which can be adapted to ALE bubbles as described in Remark \ref{bubble_adjustment}. The second and third lines follow from H\"older's inequality, Lemma \ref{rm_wgt_ineq}, and the upper bound on $\left|\widetilde{\lambda}^1_\infty\right|$. The final line is also due to $\left|\left|\left(\widetilde{\omega}^1_{\infty,S}\right)^{\frac{1}{2}}\widetilde{u}^1_\infty\right|\right|_{L^2\left(V^1\right)} \leq 2$ and $\left|\left|\widetilde{\omega}^1_{\infty,S}\right|\right|_{L^{\frac{n}{2},\infty}\left(B_{h^1}\left(q^1_\infty,\varepsilon\right)\right)} < \infty$ for every $\varepsilon > 0$. We may thus apply a density argument to deduce that the weighted eigenvalue problem holds on $V^1$ for all $\varphi \in W^{1,2}\left(V^1\right)$. Note that we could have just used $\widetilde{\omega}^1_{\infty,S} \in L^\infty\left(V^1\right)$ and then applied H\"older's inequality. However, the computation we just did also holds on intermediate bubbles, and in such a case we replace $B_{h^1}\left(q^1_\infty,\varepsilon\right)$ with the union of sufficiently small balls around each orbifold point so that Proposition \ref{triple_prod_estim} applies on each of the small balls.
    
        For subsequent bubbles, consider the point-scale sequence $\left(q^k_i, s^k_i\right)$ and define the following:
    
        \begin{align*}
            \widetilde{g}^k_i &:= \left(s^k_i\right)^{-2} g_i\\
            \widetilde{u}^k_i &:= \left(s^k_i\right)^{\frac{n}{2} - 3}u_i\\
            \widetilde{\omega}^k_{i,S,\delta,\ell}\left(x\right) &:= \left(s^k_i\right)^2 \omega_{i,S,\delta,\ell}\left(x\right). 
        \end{align*}
    
        \sloppy For each $k$ these rescalings correspond to the bubble region $B_{g_i}\left(q^k_i,Rs^k_i\right)$. We then have $\left(\widetilde{M}^k_i, \widetilde{g}^k_i, q^k_i\right) \rightarrow \left(V^k, h^k, q^k_\infty\right)$ in the pointed (orbifold) Cheeger--Gromov sense with (possibly empty) singular set $\mathcal{Q}^k$. If $\left(V^k, h^k\right)$ is a leaf bubble, then $\mathcal{Q}^k = \emptyset$ and the analysis is exactly as before. On the other hand, if $\left(V^k,h^k\right)$ is an intermediate bubble, then we need to slightly modify our earlier argument. The reader might find it easier to first go through the following argument under the assumption that only one intermediate bubble forms. After that it is mainly a matter of introducing more notation to account for a larger number of bubbles forming.
    
        If we consider $B_{\widetilde{g}^k_i}\left(q^k_i, R\right)$, then the rescaled weight is as in the leaf bubble case:
    
        \begin{equation*}
            \widetilde{\omega}^k_{i,S,\delta,k} = \begin{cases}
                \max\left\{\left(s^k_i\right)^2 \delta^{-2}, d^{-2}_{\widetilde{g}^k_i}\left(x,q^k_i\right)\right\}, &x \in B_{\widetilde{g}^k_i}\left(q^k_i,R\right) \backslash B_{\widetilde{g}^k_i}\left(q^k_i,S\right)\\
                S^{-2}, &x \in B_{\widetilde{g}^k_i}\left(q^k_i,S\right).
            \end{cases} 
        \end{equation*}
    
        Since $\left(V^k,h^k\right)$ is now an intermediate bubble we also need to account for its child bubbles. Denote the collection of point-scale sequences associated to the child bubbles of $\left(V^k,h^k\right)$ by $\left\{\left(q^\ell_i, s^\ell_i\right)\right\}^L_{\ell = 1}$. Note that we are temporarily reindexing these bubbles. Then we know, by the discussion in Section \ref{bubble_tree_prelims}, that
    
        \begin{align*}
            0 < M^{-1} &\leq d_{\widetilde{g}^k_i}\left(q^k_i,q^\ell_i\right) \leq M,\\
            \widetilde{s}^\ell_i &:= \frac{s^\ell_i}{s^k_i} \rightarrow 0
        \end{align*}
    
        for some $M > 0$. We do not need to consider the case when $\widetilde{s}^\ell_i$ converges to some non-zero value as the bubbles would then be separable by \eqref{separable}. For $i \gg 1$, we consider a ball of radius $Rs^k_i$ around each $q^\ell_i$. Then we have, after rescaling,
    
        \begin{equation*}
            \widetilde{\omega}^k_{i,S,\delta,\ell} = \begin{cases}
                \max\left\{\left(s^k_i\right)^2 \delta^{-2}, d^{-2}_{\widetilde{g}^k_i}\left(x,q^\ell_i\right)\right\}, &x \in B_{\widetilde{g}^k_i}\left(q^k_i,R\right) \backslash B_{\widetilde{g}^k_i}\left(q^\ell_i,S\widetilde{s}^\ell_i\right)\\
                \left(S\widetilde{s}^\ell_i\right)^{-2}, &x \in B_{\widetilde{g}^k_i}\left(q^\ell_i,S\widetilde{s}^\ell_i\right).
            \end{cases} 
        \end{equation*}
    
        Away from $q^\ell_i$, the rescaled weight function $\widetilde{\omega}^k_{i,S,\delta,\ell}$ will converge in the $W^{1,\infty}_{\mathrm{loc}}$-sense to 
    
        \begin{equation*}
            \widetilde{\omega}^k_{\infty,S,\ell}\left(x\right) := d^{-2}_{h^k}\left(x,q^\ell_\infty\right),
        \end{equation*} 
    
        while $\widetilde{\omega}^k_{i,S,\delta,k}$ will converge in the $W^{1,\infty}_{\mathrm{loc}}$-sense to
    
        \begin{equation*}
            \widetilde{\omega}^k_{\infty,S,k}\left(x\right) := 
            \begin{cases}
                d^{-2}_{\widetilde{g}^k_i}\left(x,q^k_i\right), &x \in V^k \backslash B_{h^k}\left(q^k_\infty,S\right)\\
                S^{-2}, &x \in B_{h^k}\left(q^k_\infty,S\right).
            \end{cases} 
        \end{equation*} 
    
        We then set
    
        \begin{align*}
            \widetilde{\omega}^k_{i,S,\delta}\left(x\right) &:= \max_{\ell = 1,\dots, L,k} \widetilde{\omega}^k_{i,S,\delta,\ell}\left(x\right),\\
            \widetilde{\omega}^k_{\infty,S}\left(x\right) &:= \max_{\ell = 1,\dots, L,k} \widetilde{\omega}^k_{\infty,S,\ell}\left(x\right).
        \end{align*}
    
        The curvature decay along the end of $V^k$ and an adaptation of Proposition \ref{rm_wgt_ineq} shows there exists a constant $\widehat{C} > 0$ depending only on $h^k, S$ such that $\left|\mathrm{Rm}_{h^k}\right|\left(x\right) \leq \widehat{C}\widetilde{\omega}^k_{\infty,S}\left(x\right)$ for every $x \in V^k$. We can now proceed as before to show $\left|\widetilde{\lambda}^k_\infty\right|$ is uniformly bounded and that there exists some $\widetilde{u}^k_\infty \in W^{1,2}_{\mathrm{loc}}\left(V^k \backslash \mathcal{Q}^k\right)$ such that
    
        \begin{equation*}
            \begin{cases}
                \widetilde{u}^k_i &\rightharpoonup \widetilde{u}^k_\infty ~~\mathrm{in}~ W^{1,2}_{\mathrm{loc}}\left(V^k \backslash \mathcal{Q}^k\right)\\
                \widetilde{u}^k_i &\rightarrow \widetilde{u}^k_\infty ~~\mathrm{in}~ L^2_{\mathrm{loc}}\left(V^k \backslash \mathcal{Q}^k\right).
            \end{cases}
        \end{equation*}
    
        That $\widetilde{u}^k_\infty \in W^{1,2}_{\mathrm{loc}}\left(V^k\right)$ follows from the arguments at the end of Section \ref{body_upper}. That is, either using the bounds we derived above and the lower semi-continuity of norms under weak convergence, or a $2$-capacity argument. From there, we can show $\widetilde{u}^k_\infty \in W^{1,2}_{\widetilde{\omega}^k_{\infty,S}}\left(V^k\right)$ along the lines of the analysis in the leaf bubble case. Note that $W^{1,2}_{\widetilde{\omega}^k_{\infty,S}}\left(V^k\right)$ is defined analogously to $W^{1,2}_{\widetilde{\omega}^1_{\infty,S}}\left(V^1\right)$, albeit first by taking the completion of $C^\infty_c\left(V^k \backslash \mathcal{Q}^k\right)$ with respect to the $W^{1,2}_{\widetilde{\omega}^k_{\infty,S}}$-norm and then using a $2$-capacity argument. All of this and $\widetilde{\omega}^k_{i,S,\delta} \rightarrow \widetilde{\omega}^k_{\infty,S}$ in the $W^{1,\infty}_{\mathrm{loc}}\left(V^k \backslash \mathcal{Q}^k\right)$-sense tells us the weighted eigenvalue problem holds on $V^k$ for all $\varphi \in C^\infty_c\left(V^k \backslash \mathcal{Q}^k\right)$. We then use a $2$-capacity and density argument again to conclude the weighted problem holds for all $\varphi \in W^{1,2}\left(V^k\right)$. One then proceeds by induction to complete the blow-up analysis of the eigenvalues/modes on each bubble.
    \subsection{Neck Region Analysis}\label{neck_upper}
        We now show that index and nullity cannot concentrate in the neck regions. Roughly speaking, this should be the case as the neck theorem from \cite{by_bubbling} tells us the neck regions are diffeomorphic to an annulus on a flat cone, hence should be strictly linearly stable.
    
        In the following result and its proof, we denote a neck region by $\mathcal{N}$ to ease notation as the exact bubble scales involved do not really matter. For similar reasons, we will denote the weight function by $\omega_{S,\delta}$. Also, recall that $K := B_g\left(p, r_{\mathrm{orb}}\right)$.
    
        \begin{proposition}\label{spectral_neck}
            For $n \geq 4$, let $\left(M, g, f\right)$ be an $n$-dimensional complete connected gradient Ricci shrinker with $\mu\left(g\right) \geq \underline{\mu} > -\infty$, $R_g + \sup_{M \backslash K} \left|\mathrm{Rm}_g\right| \leq R_0$, and, if $n \geq 5$, satisfying \eqref{energybounds}. Then there exists some $\varepsilon = \varepsilon\left(R_0, \underline{\mu},n,S,\delta\right) > 0$ such that, when $\left|\mathrm{Rm}_g\right| \leq \varepsilon \omega_{S,\delta}$ on $\mathcal{N}$, we have
    
            \begin{equation*}
                0 < \lambda_0 \leq \inf \left\{\mathcal{B}_f\left[u,u\right] : u \in W^{1,2}_{0,f}\left(M\right), \int_{\mathcal{N}} \omega_{S,\delta} \left|u\right|^2~e^{-f} dV_g = 1\right\}.
            \end{equation*} 
    
            Here $W^{1,2}_{0,f}\left(\mathcal{N}\right)$ is the $W^{1,2}_f\left(\mathcal{N}\right)$ closure of $C^1_c\left(\mathcal{N}\right)$ and $\lambda_0 = \lambda_0\left(\varepsilon,R_0,\underline{\mu},n,W\right) > 0$, with $W$ the upper bound for $\left|\left|\omega_{S,\delta}\right|\right|_{L^{\frac{n}{2},\infty}_f\left(\mathcal{N}\right)}$.
        \end{proposition}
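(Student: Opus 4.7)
The plan is to leverage the Lorentz-space triple-product estimate from Remark \ref{triple_prod_remark} in two opposite directions: first to absorb the curvature term of $\mathcal{B}_f$ into a small fraction of the Dirichlet energy, then to bootstrap a lower bound on the weighted $L^2$-mass in terms of the Dirichlet energy. The whole point of introducing Lorentz spaces was precisely to make such a bilinear pairing work when $\omega_{S,\delta}$ is in $L^{n/2,\infty}_f$ but not in any smaller Lebesgue class.

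First I would work on $u \in C^1_c(\mathcal{N})$; extension by zero identifies $u$ with an element of $C^1_c(B_g(q,s))$ for a ball $B_g(q,s) \supset \mathcal{N}$ of radius $s \ll 1$ containing the neck, so Remark \ref{triple_prod_remark} applies (the uniform choice $r = r_{\mathrm{orb}}$ controls the Lorentz--Sobolev constant by $n,\underline{\mu},R_0$). Using the hypothesis $|\mathrm{Rm}_g| \leq \varepsilon\,\omega_{S,\delta}$ pointwise on $\mathcal{N}$ together with Remark \ref{triple_prod_remark} applied to the pair $(\omega,u,v) = (\omega_{S,\delta},u,u)$, I obtain
\begin{equation*}
  2\int_\mathcal{N} |\langle \mathrm{Rm}_g \ast u, u\rangle|\, e^{-f} dV_g \leq 2\varepsilon \int_\mathcal{N} \omega_{S,\delta} |u|^2 e^{-f} dV_g \leq 2\varepsilon C W \,\|\nabla u\|^2_{L^2_f(\mathcal{N})},
\end{equation*}
where $C = C(n,\underline{\mu},R_0) > 0$ comes from Remark \ref{triple_prod_remark} and $W$ is the $L^{n/2,\infty}_f$-bound on $\omega_{S,\delta}$ supplied by Lemma \ref{weight_lorentz_bound}. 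Choosing $\varepsilon = \varepsilon(R_0,\underline{\mu},n,S,\delta) > 0$ small enough that $2\varepsilon C W \leq \tfrac{1}{2}$ yields the Dirichlet energy lower bound $\mathcal{B}_f[u,u] \geq \tfrac{1}{2}\|\nabla u\|^2_{L^2_f(\mathcal{N})}$.

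Next, I would apply Remark \ref{triple_prod_remark} a second time, again with the triple $(\omega_{S,\delta},u,u)$, but now reading it as an upper bound on the weighted $L^2$-mass itself:
\begin{equation*}
  \int_\mathcal{N} \omega_{S,\delta} |u|^2 e^{-f} dV_g \leq C W \,\|\nabla u\|^2_{L^2_f(\mathcal{N})} \leq 2CW\, \mathcal{B}_f[u,u].
\end{equation*}
Setting $\lambda_0 := (2CW)^{-1}$, which depends only on $\varepsilon, R_0, \underline{\mu}, n, W$ as required, gives $\mathcal{B}_f[u,u] \geq \lambda_0$ whenever $u$ satisfies the normalization $\int_\mathcal{N} \omega_{S,\delta}|u|^2 e^{-f} dV_g = 1$. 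A density argument promotes the inequality from $C^1_c(\mathcal{N})$ to $W^{1,2}_{0,f}(\mathcal{N})$, both sides being continuous in the $W^{1,2}_f$-topology thanks to Proposition \ref{triple_prod_estim}.

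The main technical point to verify carefully is that the constant $C$ in Remark \ref{triple_prod_remark} really is uniform along the sequence of neck regions: necks shrink to orbifold points as $i \to \infty$, so I would fix a small uniform radius $s \leq \delta_0$ (with $\delta_0$ from Proposition \ref{lorentz_sobolev}) and enclose $\mathcal{N}$ inside $B_g(q,s) \subset B_g(p,r_{\mathrm{orb}})$. Since $r_{\mathrm{orb}}$ depends only on $R_0,n,\underline{\mu}$, Proposition \ref{lorentz_sobolev} supplies uniform Lorentz--Sobolev constants and the argument goes through independently of which neck we are on. No scale-dependent subtleties appear because both the curvature assumption $|\mathrm{Rm}_g| \leq \varepsilon \omega_{S,\delta}$ and the pairing $\int \omega |u|^2 e^{-f} dV_g$ share the same scaling as the Dirichlet energy, which is exactly why the weighted problem was introduced in Section \ref{weighted_problem}.
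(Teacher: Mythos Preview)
Your argument is correct and follows essentially the same route as the paper: both proofs use Remark \ref{triple_prod_remark} to bound $\int_{\mathcal{N}}\omega_{S,\delta}|u|^2 e^{-f}dV_g \leq CW\|\nabla u\|^2_{L^2_f(\mathcal{N})}$ and the pointwise hypothesis $|\mathrm{Rm}_g|\leq \varepsilon\,\omega_{S,\delta}$ to absorb the curvature term, then choose $\varepsilon$ small against $CW$. The only cosmetic difference is the order of the two steps; the paper first inserts the normalization and then splits off the curvature term, arriving at $\lambda_0 = (1-2CW\varepsilon)/(CW)$, which agrees with your $(2CW)^{-1}$ at the threshold $2\varepsilon CW = \tfrac12$.
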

    
        \begin{proof}
            By density, it suffices to prove the result for $u \in C^1_c\left(M\right)$ with $\mathrm{supp}\left(u\right) \subseteq \mathcal{N}$ and satisfying the normalization in the statement of the result. Since $u$ has compact support, a slight adjustment to the proof of Proposition \ref{triple_prod_estim}, as mentioned in Remark \ref{triple_prod_remark}, yields
    
            \begin{equation*}
                1 = \int_{\mathcal{N}} \omega_{S,\delta} \left|u\right|^2~e^{-f} dV_g \leq CW\left|\left|\nabla u\right|\right|^2_{L^2_f\left(\mathcal{N}\right)}.
            \end{equation*}
    
            Here $C > 0$ is a constant depening only on $n, \underline{\mu}, R_0$. Note that we also used $\left|\left|\omega_{S,\delta}\right|\right|_{L^{\frac{n}{2},\infty}_f\left(\mathcal{N}\right)} \leq W$ here. We can now estimate as follows:
    
            \begin{align*}
                1 &\leq CW \int_{\mathcal{N}} \left|\nabla u\right|^2 e^{-f} dV_g\\
                &= CW \int_{\mathcal{N}} \left(\left|\nabla u\right|^2 - 2\left<\mathrm{Rm}_g \ast u, u\right>\right) e^{-f} dV_g + 2CW \int_{\mathcal{N}} \left<\mathrm{Rm}_g \ast u, u\right> e^{-f} dV_g\\
                &\leq CW \int_{\mathcal{N}} \left(\left|\nabla u\right|^2 - 2\left<\mathrm{Rm}_g \ast u, u\right>\right) e^{-f} dV_g + 2CW\varepsilon\int_{\mathcal{N}} \omega_{S,\delta} \left|u\right|^2 e^{-f} dV_g\\
                &= CW \int_{\mathcal{N}} \left(\left|\nabla u\right|^2 - 2\left<\mathrm{Rm}_g \ast u, u\right>\right) e^{-f} dV_g + 2CW\varepsilon.
            \end{align*}
    
            Note that the third line is due to $\left|\mathrm{Rm}_g\right|\left(x\right) \leq \varepsilon \omega_{S,\delta}\left(x\right)$ for all $x \in \mathcal{N}$. Next, taking $\varepsilon < \left(4CW\right)^{-1}$ and rearranging yields
    
            \begin{equation*}
                0 < \lambda_0 := \frac{1 - 2CW\varepsilon}{CW} \leq \int_{\mathcal{N}} \left(\left|\nabla u\right|^2 - 2\left<\mathrm{Rm}_g \ast u, u\right>\right) e^{-f} dV_g,
            \end{equation*}
    
            which is what we wanted. 
        \end{proof}
    
        Note that Proposition \ref{spectral_neck} does not depend on the neck region having the same bubble scale in both the inner and outer radii. This is due to the expression we found for the lower bound of $\lambda_0$ as well as the uniform control on the local Lorentz--Sobolev constant and the weight function from, respectively, Lemma \ref{lorentz_sobolev} and Lemma \ref{weight_lorentz_bound}. Finally, note that the assumptions of Proposition \ref{spectral_neck} are satisfied on each neck region after taking $i \gg 1$, hence the neck regions are strictly linearly stable and do not contribute any index or nullity. In particular, recall from the final part of the proof of Lemma \ref{rm_wgt_ineq} that in neck regions $\left|\mathrm{Rm}_g\right|\omega^{-1}_{S,\delta} \rightarrow 0$ as $i \rightarrow \infty$. Hence we can guarantee, for $i \gg 1$, the existence of some $\varepsilon > 0$ so that $\left|\mathrm{Rm}_g\right| \leq \varepsilon \omega_{S,\delta}$ in each neck region.
    
        Before moving on, we introduce some notation to make subsequent results easier to state and prove. Consider an eigenmode $u_i \in \mathcal{E}_{\omega_{i,S,\delta}}\left(\lambda_i; L_{f_i}, W^{1,2}_{f_i}\left(M_i\right)\right)$ with $\lambda_i \leq 0$. Then, as we proved in Section \ref{body_upper} and Section \ref{bubble_upper}, after passing to a subsequence, we have $\lambda_i \rightarrow \lambda_\infty \leq 0$ and
    
        \begin{equation*}
            u_i \rightarrow \left(u_\infty,\left(\widetilde{\mathbf{u}}^1_\infty, \dots, \widetilde{\mathbf{u}}^{\left|\mathcal{Q}\right|}_\infty\right)\right),
        \end{equation*}
    
        with $u_\infty \in W^{1,2}_{f_\infty}\left(M_\infty\right)$ and $\widetilde{\mathbf{u}}^k_\infty$ denoting, for each $k = 1, \dots, \left|\mathcal{Q}\right|$,
    
        \begin{equation*}
            \widetilde{\mathbf{u}}^k_\infty := \left(\widetilde{u}^k_\infty, \left(\widetilde{\mathbf{u}}^{k,1}_\infty, \dots, \widetilde{\mathbf{u}}^{k,\left|\mathcal{Q}^k\right|}_\infty\right)\right).
        \end{equation*}
    
        Here, for each $k$, $\widetilde{u}^k_\infty \in W^{1,2}_{\widetilde{\omega}^k_{\infty,S}}\left(V^k\right)$. The first entries $u_\infty$ and $\widetilde{u}^k_\infty$ are the contributions (respectively) from the body region of the orbifold shrinker $M_\infty$ and the bubble $V^k$, while each $\widetilde{\mathbf{u}}^k_\infty$ and $\widetilde{\mathbf{u}}^{k,\ell}_\infty$ is the contribution from the bubble tree associated to (respectively) $q^k_\infty \in \mathcal{Q}$ and $q^\ell_\infty \in \mathcal{Q}^k$, where $\mathcal{Q}^k$ is the (possibly empty) singular set of $V^k$. This proceeds until one exhausts every bubble tree.
    
        Our next result will show that, due to Proposition \ref{spectral_neck}, we cannot have $\widetilde{\mathbf{u}}^k_\infty = \mathbf{0}$ and $u_\infty = 0$ all at once, where $\mathbf{0}$ means $\widetilde{\mathbf{u}}^k_\infty = \left(0, \left(\mathbf{0},\dots,\mathbf{0}\right)\right)$ and similarly for $\widetilde{\mathbf{u}}^{k,\ell}_\infty$. Now that we have all the technical results and techniques from previous sections in hand, the proof is extremely similar to that of Claim $2$ in \cite{workman_semicontinuity}. We therefore just provide the main points and indicate any needed modifications.
    
        \begin{lemma}\label{main_contradiction_arg}
           For $n \geq 4$, assume $\left(M_i, g_i, f_i\right)$ is a sequence of $n$-dimensional complete connected gradient Ricci shrinkers satisfying $\left(\mathcal{A}\right)$ and let $\left\{a_j\right\}^N_{j=1}$ be a set of real numbers such that $\sum^N_{j=1} a^2_j = 1$. Then, if $\left\{u_{i,j}\right\}^N_{j=1}$ is a set of eigenmodes associated to the non-positive eigenvalues $\left\{\lambda_{i,j}\right\}^N_{j=1}$ which are orthonormal with respect to the weighted inner product $\left<\cdot,\cdot\right>_{\omega_{i,S,\delta},L^2_{f_i}\left(M_i\right)}$ (defined in \eqref{weighted_inner_product}) we have
    
            \begin{equation*}
                v_i := \sum^N_{j=1} a_j u_{j,i} \rightarrow \sum^N_{j=1} a_j \left(u_{j,\infty}, \left(\widetilde{\mathbf{u}}^1_{j,\infty},\dots,\widetilde{\mathbf{u}}^{\left|\mathcal{Q}\right|}_{j,\infty}\right) \right) \neq \left(0,\left(\mathbf{0},\dots,\mathbf{0}\right)\right).
            \end{equation*}
        \end{lemma}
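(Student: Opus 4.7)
I would argue by contradiction: suppose the multi-component limit of $v_i$ is $(0,(\mathbf{0},\dots,\mathbf{0}))$, so that $\sum_j a_j u_{j,\infty} = 0$ on $M_\infty$ and, on every bubble $V^k$ in every bubble tree, $\sum_j a_j \widetilde{u}^k_{j,\infty} = 0$ (and recursively on every descendant bubble). The strategy is to exploit the $\omega_{i,S,\delta}$-orthonormality to force all of the weighted mass of $v_i$ to concentrate in the neck regions, and then to apply Proposition \ref{spectral_neck} to contradict the trivial upper bound $\mathcal{B}_{f_i}[v_i,v_i] \leq 0$ that follows from $\lambda_{j,i} \leq 0$.

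First, the weighted orthonormality of the $u_{j,i}$ and $\sum_j a_j^2 = 1$ immediately give
\begin{equation*}
\int_{M_i} \omega_{i,S,\delta} |v_i|^2\, e^{-f_i}\, dV_{g_i} = 1 \quad \text{and} \quad \mathcal{B}_{f_i}[v_i, v_i] = \sum_j a_j^2 \lambda_{j,i} \leq 0,
\end{equation*}
while the arguments of Section \ref{body_upper} supply a uniform bound $\|v_i\|_{W^{1,2}_{f_i}(M_i)} \leq C(R_0,S,\delta)$. The contradiction hypothesis, combined with the strong $L^2$-convergence $v_i \to 0$ on compact subsets of $M_\infty \setminus \mathcal{Q}$ established in Section \ref{body_upper} and the analogous rescaled strong $L^2_{\mathrm{loc}}$-convergence on each $V^k \setminus \mathcal{Q}^k$ from Section \ref{bubble_upper}, implies that on any compact exhaustion $\Omega_i \subset M_i$ built from the body region with small neighborhoods of $\mathcal{Q}$ excised, together with the bubble cores at every level of every bubble tree, one has $\int_{\Omega_i} \omega_{i,S,\delta} |v_i|^2 e^{-f_i} dV_{g_i} \to 0$ as $i\to\infty$ and the scales defining $\Omega_i$ are taken to infinity appropriately. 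Since the total weighted mass equals $1$, the whole unit of mass must concentrate asymptotically in the finite union $\mathcal{N}$ of neck annuli sitting between successive bubble scales.

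Next, I would introduce an IMS-type squared partition $\eta_1^2 + \eta_2^2 \equiv 1$, with $\eta_2$ supported in $\mathcal{N}$ and equal to $1$ on a central subannulus of each neck, and $\eta_1$ supported on the complementary body-plus-bubble-cores region. Because the logarithmic width of every neck annulus tends to infinity by \eqref{rescaled_distance}, a radial logarithmic cutoff can be arranged so that $|\nabla \eta_1|^2 + |\nabla \eta_2|^2 \leq \varepsilon\, \omega_{i,S,\delta}$ on all of $M_i$ for any prescribed $\varepsilon > 0$ once $i$ is large enough. A short integration by parts using $\eta_1^2 + \eta_2^2 = 1$ (so that $\eta_1 \nabla \eta_1 + \eta_2 \nabla \eta_2 = 0$) then produces the decomposition
\begin{equation*}
\mathcal{B}_{f_i}[v_i, v_i] = \mathcal{B}_{f_i}[\eta_1 v_i, \eta_1 v_i] + \mathcal{B}_{f_i}[\eta_2 v_i, \eta_2 v_i] - \int_{M_i} \bigl(|\nabla \eta_1|^2 + |\nabla \eta_2|^2\bigr) |v_i|^2\, e^{-f_i}\, dV_{g_i}.
\end{equation*}

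Finally, I would estimate the three pieces. The cutoff error is bounded by $\varepsilon \int \omega_{i,S,\delta} |v_i|^2 e^{-f_i} dV_{g_i} = \varepsilon$. For the neck piece, the last part of the proof of Lemma \ref{rm_wgt_ineq} gives $|\mathrm{Rm}_{g_i}|\,\omega_{i,S,\delta}^{-1} \to 0$ on $\mathcal{N}$, so for $i$ large the smallness hypothesis of Proposition \ref{spectral_neck} is met for $\eta_2 v_i \in W^{1,2}_{0,f_i}(\mathcal{N})$, yielding $\mathcal{B}_{f_i}[\eta_2 v_i, \eta_2 v_i] \geq \lambda_0 \int \omega_{i,S,\delta} \eta_2^2 |v_i|^2 e^{-f_i} dV_{g_i}$, whose right-hand side tends to $\lambda_0 > 0$ by the concentration step. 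For the complementary piece, $\eta_1 v_i$ is supported where the weighted mass of $v_i$ vanishes in the limit, so, using Lemma \ref{rm_wgt_ineq} to dominate $|\mathrm{Rm}_{g_i}|$ by $\omega_{i,S,\delta}$, the curvature contribution to $\mathcal{B}_{f_i}[\eta_1 v_i, \eta_1 v_i]$ is at least $-C \int \omega_{i,S,\delta} \eta_1^2 |v_i|^2 e^{-f_i} dV_{g_i} \to 0$, and the gradient term is non-negative; hence $\mathcal{B}_{f_i}[\eta_1 v_i, \eta_1 v_i] \geq -o(1)$. Combining the three bounds gives $\mathcal{B}_{f_i}[v_i,v_i] \geq \lambda_0 - O(\varepsilon) - o(1) > 0$ for $\varepsilon$ small and $i$ large, contradicting $\mathcal{B}_{f_i}[v_i,v_i] \leq 0$. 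I expect the main obstacle to be the simultaneous construction of the cutoffs across a nested family of neck annuli (one per edge of each bubble tree): one must verify that the logarithmic transition widths can be chosen uniformly wide enough that $|\nabla \eta_i|^2 / \omega_{i,S,\delta}$ is simultaneously small on every neck, while at the same time the smallness $|\mathrm{Rm}_{g_i}|/\omega_{i,S,\delta} \ll 1$ required by Proposition \ref{spectral_neck} holds simultaneously throughout $\mathcal{N}$.
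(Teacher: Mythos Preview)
Your contradiction argument is correct and reaches the same endpoint as the paper, but via a genuinely different localization scheme. The paper builds a single test function $V_i$ supported exactly on the neck annuli (equation \eqref{eigenmode_sum_expr}) and estimates $|\mathcal{B}_{f_i}[v_i,v_i] - \mathcal{B}_{f_i}[V_i,V_i]|$ directly; this forces it to show that $\|\nabla v_i\|_{L^2_{f_i}}$ is small on the body and on the bubble cores, which is accomplished by invoking interior elliptic $W^{2,2}$-regularity (Theorem~4.9 in \cite{giaquinta} together with Proposition~3.7 in \cite{li_zhang_weighted_estimates} for the non-compact part) to upgrade the strong $L^2$-convergence $v_i \to 0$ to strong $W^{1,2}$-convergence. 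Your IMS decomposition
\[
\mathcal{B}_{f_i}[v_i,v_i] = \mathcal{B}_{f_i}[\eta_1 v_i,\eta_1 v_i] + \mathcal{B}_{f_i}[\eta_2 v_i,\eta_2 v_i] - \int_{M_i}\bigl(|\nabla\eta_1|^2 + |\nabla\eta_2|^2\bigr)|v_i|^2\, e^{-f_i}\, dV_{g_i}
\]
sidesteps this entirely: since the gradient term in $\mathcal{B}_{f_i}[\eta_1 v_i,\eta_1 v_i]$ is non-negative and can simply be dropped, you only need the curvature contribution there to be $o(1)$, and that follows from Lemma~\ref{rm_wgt_ineq} together with the vanishing of the \emph{weighted} $L^2$-mass on $\mathrm{supp}(\eta_1)$ --- no gradient control on $v_i$ is needed. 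This is a genuine simplification over the paper's route. In exchange, the paper's approach uses only a single cutoff with bounded-ratio transition annuli and does not rely on the logarithmic width of the necks tending to infinity, whereas your gradient-error bound $|\nabla\eta_j|^2 \leq \varepsilon\,\omega_{i,S,\delta}$ does. The obstacle you flag (simultaneously placing the cutoffs across a nested family of necks) is real but no worse than in the paper, which also restricts to the leaf-bubble case for exposition and leaves the general bubble-tree induction to the reader.
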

    
        \begin{proof}
            Assume the desired result does not hold and, for simplicity, that only leaf bubbles form. One can run through the following argument when intermediate bubbles form as well. However, the decomposition of $M_i$ used in the definition of the test function $V_i$ we will introduce shortly becomes incredibly tedious to write down.
    
            Consider a cut off function $\chi \in C^\infty\left(\left[0,\infty\right);\left[0,1\right]\right)$ with the following properties:
    
            \begin{itemize}
                \item $\chi\left(t\right) = 1$ for $t \in \left[0,1\right]$,
                \item $\chi\left(t\right) = 0$ for $t \in \left[2,\infty\right)$,
                \item $-C \leq \chi'\left(t\right) \leq 0$,
            \end{itemize}        
    
            where $C > 0$ is a universal constant. Now, for $\rho \ll 1$ and $i \gg 1$, define the following test function, which in particular is non-vanishing only on annular (that is, neck) regions:
    
            \begin{equation}\label{eigenmode_sum_expr}
                V_i\left(x\right) := 
                \begin{cases}
                    0,&x \in M_i \backslash \bigcup_{q^k_i \in \mathcal{Q}} B_{g_i}\left(q^k_i,\rho\right)\\
                    v_i \chi\left(2\rho^{-1}d_{g_i}\left(x,\mathcal{Q}\right)\right),&x \in \bigcup_{q^k_i \in \mathcal{Q}} A^{g_i}_{\frac{\rho}{2},\rho}\left(q^k_i\right)\\
                    v_i,&x \in A^{g_i}_{2Ss^k_i, \frac{\rho}{2}}\left(q^k_i\right)~~\forall q^k_i \in \mathcal{Q} \\
                    v_i \left(1-\chi\left(\left(Ss^k_i\right)^{-1}d_{g_i}\left(x,q^k_i\right)\right)\right),&x \in A^{g_i}_{Ss^k_i, 2Ss^k_i}\left(q^k_i\right)~~\forall q^k_i \in \mathcal{Q}\\
                    0,&x \in B_{g_i}\left(q^k_i,Ss^k_i\right)~~\forall q^k_i \in \mathcal{Q}.
                \end{cases}
            \end{equation}
    
            The triangle inequality then tells us we need to estimate
    
            \begin{align}
                \left|\mathcal{B}_{f_i}\left(v_i,v_i\right) - \mathcal{B}_{f_i}\left(V_i,V_i\right)\right| &\leq \int_{M_i} \left|\left|\nabla v_i\right|^2 - \left|\nabla V_i\right|^2\right| e^{-f_i}dV_{g_i} \label{bilinear_diff}\\
                &+ \int_{M_i} \left(\omega_{i,S,\delta} + \left|\mathrm{Rm}_{g_i}\right|\right)\left|\left|v_i\right|^2 - \left|V_i\right|^2\right| e^{-f_i} dV_{g_i}\nonumber.
            \end{align}
    
            We aim to show, for $\rho \ll 1$ and $S, i \gg 1$, that the right hand side \eqref{bilinear_diff} can be made arbitrarily small. Then, since $V_i$ satisfies the assumptions of Proposition \ref{spectral_neck}, $\mathcal{B}_{f_i}\left(V_i,V_i\right) \geq \lambda_0 > 0$, we must have $\mathcal{B}_{f_i}\left(v_i,v_i\right) > 0$ which contradicts $\lambda_{i,k} \leq 0$. To adjust $V_i$ to the general case one needs to excise child bubbles from each intermediate bubble and define the function appropriately in terms of the bubble scales on these removed regions.
    
            To accomplish the goal stated above, one estimates as in the proof of Claim $2$ in \cite{workman_semicontinuity}. For the first term on the right hand side of \eqref{bilinear_diff}, this involves only using the properties of $\chi$, the definition of $V_i$, and Young's inequality. Estimating the second term uses the same strategy, as well as Lemma \ref{rm_wgt_ineq} and that $\omega_{i,S,\delta}, \mathrm{Rm}_{g_i}$ are bounded outside the bubble regions. This latter point means the standard H\"older's inequality can be used on the body region rather than Proposition \ref{triple_prod_estim}. In the end, one arrives at, for $i \gg 1$ and $\rho \ll 1$,
    
            \begin{align*}
                &\left|\mathcal{B}_{f_i}\left(v_i,v_i\right) - \mathcal{B}_{f_i}\left(V_i,V_i\right)\right|\\
                &\leq C\left|\left|\nabla v_i\right|\right|^2_{L^2_{f_i}\left(M_i \backslash \bigcup_{q^k_i \in \mathcal{Q}} B_{g_i}\left(q^k_i,\rho\right)\right)}\\
                &+ C\sum_{q^k_i \in \mathcal{Q}} \left(\left|\left|\nabla v_i\right|\right|^2_{L^2_{f_i}\left(A^{g_i}_{\frac{\rho}{2}, \rho}\left(q^k_i\right)\right)} + \rho^{-2} \left|\left|v_i\right|\right|^2_{L^2_{f_i}\left(A^{g_i}_{\frac{\rho}{2},\rho}\left(q^k_i\right)\right)} + \left|\left|\widetilde{v}^k_i\right|\right|^2_{W^{1,2}\left(B_{\widetilde{g}^k_i}\left(q^k_i, 2S\right)\right)}\right).
            \end{align*}
    
            Here $\widetilde{v}^k_i$ denotes the rescaling of $v_i$ by the bubble scale $s^k_i$ as in Section \ref{bubble_upper}. Note that \eqref{weighted_eigenvalue_weak} is satisfied on any $B_{g_i}\left(x,\frac{\rho}{2}\right) \subset \subset M_i \backslash \bigcup_{q^k_i \in \mathcal{Q}} B_{g_i}\left(q^k_i, 2Ss^k_i\right)$. Therefore, as at the end of Section \ref{body_upper}, we may appeal to Theorem $4.9$ in \cite{giaquinta} to deduce that
    
            \begin{equation}\label{w22_regularity}
                \left|\left|v_i\right|\right|_{W^{2,2}_{f_i}\left(B_{g_i}\left(x, \frac{\rho}{4}\right)\right)} \leq C\left(R_0,\delta,S,\rho\right)\left(\left|\left|v_i\right|\right|_{L^2_{f_i}\left(B_{g_i}\left(x,\frac{\rho}{2}\right)\right)} + \rho^{-2}\left|\left|\sum^N_{j=1}\lambda_{i,j} a_j u_{i,j}\right|\right|_{L^2_{f_i}\left(B_{g_i}\left(x,\frac{\rho}{2}\right)\right)}\right).
            \end{equation} 
    
            Then Proposition \ref{weighted_sobolev_compact} and our assumption $v_i \rightarrow \left(0,\left(\mathbf{0},\dots,\mathbf{0}\right)\right)$ tells us 
    
            \begin{equation*}
                \begin{cases}
                    v_i \rightharpoonup 0,&\mathrm{in}~W^{2,2}_{f_\infty}\left(B_{g_\infty}\left(x,\frac{\rho}{4}\right)\right)\\
                    v_i \rightarrow 0,&\mathrm{in}~W^{1,2}_{f_\infty}\left(B_{g_\infty}\left(x,\frac{\rho}{4}\right)\right).
                \end{cases}
            \end{equation*}
    
            Note that the interior estimates yielding this hold on every bounded subset of the body region. However, $M_i$ is non-compact in general so we need to ensure we have good control globally. This follows from, as at the end of Section \ref{body_upper}, deducing uniform $W^{1,2}_{f_i}$ bounds on $M_i \backslash K_i$ in terms of the $W^{1,2}_{f_i}$-norm of $v_i$ by using a cut-off argument and appealing to Proposition $3.7$ in \cite{li_zhang_weighted_estimates}. Proposition \ref{weighted_sobolev_compact} then yields strong $W^{1,2}_{f_i}$-convergence on $M_i \backslash K_i$. One can argue similarly on the bubble regions after using the rescaling from Section \ref{bubble_upper}. Finally, cover each annular region in the definition of $V_i$ by a finite collection of balls and apply \eqref{w22_regularity} to the covering. In all, for any $\varepsilon > 0$ we can take $\rho \ll 1$ and $i,S \gg 1$ so that
    
            \begin{equation*}
                \left|\mathcal{B}_{f_i}\left(v_i,v_i\right) - \mathcal{B}_{f_i}\left(V_i,V_i\right)\right| < \varepsilon.
            \end{equation*}
    
            Then each annulus in the definition of $V_i$ satisfies the assumptions of Proposition \ref{spectral_neck}. Taking $\varepsilon < \frac{\lambda_0}{2}$, with $\lambda_0 > 0$ from Proposition \ref{spectral_neck}, yields a contradiction by the reasoning from earlier.
        \end{proof}
    \subsection{Equivalence of the Weighted and Unweighted Eigenspaces}\label{equivalence}
      Now that we have completed our bubbling analysis of the eigenvalues/modes, it remains to show the dimensions of our weighted eigenspaces and unweighted eigenspaces are the same. On an (orbifold) shrinker we have the following.
    
      \begin{proposition}\label{body_equivalence_result}
        For $n \geq 4$, assume $\left(M_i, g_i, f_i\right)$ is a sequence of $n$-dimensional complete connected gradient Ricci shrinkers satisfying $\left(\mathcal{A}\right)$. Then
    
        \begin{align*}
          \mathrm{span}\left\{\bigcup_{\substack{j \\ \lambda_{i,j} \leq 0}} \mathcal{E}\left(\lambda_{i,j}; L_{f_i}, W^{1,2}_{f_i}\left(M_i\right)\right)\right\} &= \bigoplus_{\substack{j \\ \lambda_{i,j} \leq 0}} \mathcal{E}\left(\lambda_{i,j}; L_{f_i}, W^{1,2}_{f_i}\left(M_i\right)\right),\\
          \mathrm{span}\left\{\bigcup_{\substack{\ell \\ \zeta_{i,\ell} \leq 0}} \mathcal{E}_{\omega_{i,S,\delta}}\left(\zeta_{i,\ell}; L_{f_i}, W^{1,2}_{f_i}\left(M_i\right)\right)\right\} &= \bigoplus_{\substack{\ell \\ \zeta_{i,\ell} \leq 0}} \mathcal{E}_{\omega_{i,S,\delta}}\left(\zeta_{i,\ell}; L_{f_i}, W^{1,2}_{f_i}\left(M_i\right)\right),
        \end{align*}
    
        and
    
        \begin{equation*}
          \sum_{\substack{\ell \\ \zeta_{i,\ell} \leq 0}} \mathrm{dim}\left(\mathcal{E}_{\omega_{i,S,\delta}}\left(\zeta_{i,\ell}; L_{f_i}, W^{1,2}_{f_i}\left(M_i\right)\right)\right) = \sum_{\substack{j \\ \lambda_{i,j} \leq 0}} \mathrm{dim}\left(\mathcal{E}\left(\lambda_{i,j}; L_{f_i}, W^{1,2}_{f_i}\left(M_i\right)\right)\right).
        \end{equation*} 
    
        Here $\omega_{i,S,\delta}: M \rightarrow \mathbb{R}$ is the weight function described in Section \ref{weighted_problem} while the unions, summations, and direct sums are taken over all $j$ such that $\lambda_{i,j} \leq 0$ is an eigenvalue of $L_{f_i}$ and all $\ell$ such that $\zeta_{i,\ell} \leq 0$ is an eigenvalue of the corresponding weighted problem. The analogous conclusions hold on the limiting orbifold shrinker $M_\infty$.
      \end{proposition}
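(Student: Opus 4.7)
The plan is to prove the direct sum decompositions via orthogonality of eigenspaces and then establish the dimension equality by characterizing each total count as a variational quantity depending only on the bilinear form $\mathcal{B}_{f_i}$.

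The direct sum decompositions follow from a standard orthogonality argument. In the unweighted case, $L_{f_i}$ is self-adjoint with respect to $\langle\cdot,\cdot\rangle_{L^2_{f_i}}$ by Theorem \ref{spectral_theorem}, so eigenspaces for distinct $\lambda_{i,j}$ are mutually $L^2_{f_i}$-orthogonal and hence linearly independent. In the weighted case, I would first observe that for each fixed $i$ the weight $\omega_{i,S,\delta}$ is bounded above and below by positive constants on $M_i$: it satisfies $\omega_{i,S,\delta} \geq \delta^{-2}$ by construction, and its upper bound is finite since all bubble scales $s^k_i$ are nonzero. Hence $\langle\cdot,\cdot\rangle_{\omega_{i,S,\delta},L^2_{f_i}}$ defines a norm equivalent to the standard $L^2_{f_i}$-norm. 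The compact embedding of Proposition \ref{weighted_sobolev_compact} then yields, via exactly the argument that produced Theorem \ref{spectral_theorem}, a self-adjoint weighted operator with discrete spectrum whose eigenspaces for distinct $\zeta_{i,\ell}$ are mutually orthogonal with respect to the weighted inner product, and therefore linearly independent.

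For the dimension equality, I would invoke the min-max principle to express, in either the weighted or unweighted problem,
\begin{equation*}
  \sum_{\lambda \leq 0} \mathrm{dim}\,\mathcal{E}(\lambda) = \sup\bigl\{\mathrm{dim}\,W : W \subset W^{1,2}_{f_i}(M_i),~\mathcal{B}_{f_i}[u,u] \leq 0~\mathrm{for~all}~u \in W\bigr\}.
\end{equation*}
Indeed, the $k$-th eigenvalue $\lambda_k$ of either problem is given by $\inf_{\dim V = k}\sup_{u \in V \setminus \{0\}}\mathcal{B}_{f_i}[u,u]/\|u\|^2$; because the denominator is strictly positive for any nonzero $u$ in either the weighted or unweighted $L^2_{f_i}$-norm, $\lambda_k \leq 0$ is equivalent to the existence of a $k$-dimensional subspace $W$ on which $\mathcal{B}_{f_i}[u,u]\leq 0$. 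The supremum on the right thus depends only on $\mathcal{B}_{f_i}$ and the form domain, not on which positive inner product appears on the right-hand side of the eigenvalue equation, so the two total dimensions coincide.

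The main obstacle is setting up the self-adjoint framework for the weighted problem on the limiting orbifold $M_\infty$, where $\omega_{\infty,\delta}$ blows up at $\mathcal{Q}$ and the norm-equivalence argument used for fixed $i$ is no longer available. Here I would use that $\omega_{\infty,\delta} \in L^{\frac{n}{2},\infty}_{f_\infty}(K_\infty)$ by Remark \ref{distance_lorentz}, so Proposition \ref{triple_prod_estim} guarantees that $\langle\cdot,\cdot\rangle_{\omega_{\infty,\delta},L^2_{f_\infty}}$ is continuous on $W^{1,2}_{f_\infty}(M_\infty)$. Together with the semi-boundedness of $\mathcal{B}_{f_\infty}$, a standard Friedrichs-type extension furnishes a self-adjoint realization of the weighted problem on $M_\infty$ for which the min-max characterization of non-positive eigenvalues remains valid, and the argument above then goes through verbatim in the orbifold setting.
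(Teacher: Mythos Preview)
Your proposal is correct and rests on the same variational principle as the paper: the total dimension of the non-positive eigenspaces depends only on the bilinear form $\mathcal{B}_{f}$, not on which positive inner product is placed on the right-hand side of the eigenvalue equation. The paper defers most of this to Proposition~8 in \cite{workman_semicontinuity} and, for the orbifold limit, spells out an explicit minimization argument rather than invoking a Friedrichs-type extension: it assumes an $(I+1)$-dimensional subspace on which $\mathcal{B}_{f_\infty}\le 0$ exists, passes to the weighted orthogonal complement $\mathcal{U}^{\perp_\omega}$, takes a minimizing sequence for $\mathcal{B}_{f_\infty}$ under the constraint $\langle u,u\rangle_{\omega,L^2_{f_\infty}}=1$, and uses Lemma~\ref{rm_wgt_ineq} together with Proposition~\ref{weighted_sobolev_compact} to extract a limit that furnishes an extra non-positive weighted eigenmode.

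Your abstract route is cleaner, but note that the semi-boundedness of $\mathcal{B}_{f_\infty}$ you invoke on the orbifold is \emph{with respect to the weighted inner product}, and this is not automatic: since $|\mathrm{Rm}_{g_\infty}|$ blows up at $\mathcal{Q}$, one genuinely needs the pointwise bound $|\mathrm{Rm}_{g_\infty}|\le C'\omega_{\infty,\delta}$ from the second part of Lemma~\ref{rm_wgt_ineq} to conclude $\mathcal{B}_{f_\infty}[u,u]\ge -C\langle u,u\rangle_{\omega_{\infty,\delta},L^2_{f_\infty}}$. The paper cites this lemma explicitly at the corresponding step; you should as well. With that addition, your Friedrichs-extension argument and the paper's direct minimization are interchangeable, the former being more conceptual and the latter more self-contained.
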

    
      \begin{proof}
        We have all the necessary technical tools in hand to proceed almost exactly as in the proof of Proposition $8$ in \cite{workman_semicontinuity}, so we largely omit the details. Showing
    
        \begin{equation}\label{body_equiv_bnd}
          \sum_{\substack{j \\ \zeta_{i,j} \leq 0}} \mathrm{dim}\left(\mathcal{E}_{\omega_{i,S,\delta}}\left(\zeta_{i,j}; L_{f_i}, W^{1,2}_{f_i}\left(M_i\right)\right)\right) \leq \sum_{\substack{j \\ \lambda_{i,j} \leq 0}} \mathrm{dim}\left(\mathcal{E}\left(\lambda_{i,j}; L_{f_i}, W^{1,2}_{f_i}\left(M_i\right)\right)\right)
        \end{equation} 
    
        can be done as in \cite{workman_semicontinuity}. To prove the reverse inequality, we just need to adapt Workman's argument to construct possible eigenmode candidates. This can be done as in \cite{workman_semicontinuity} when $\mathrm{Rm}_g \in L^\infty\left(M\right)$, as is the case when we deal with the smooth shrinkers $M_i$. On the other hand, when working with the orbifold shrinker $M_\infty$, one can pass to local orbifold covers and proceed as in the smooth case since, as outlined in Remark \ref{orbifold_remark}, $M_\infty$ is an orbifold of bounded curvature. Another argument involving adaptations of weighted Sobolev spaces from \cite{dai_wang1,dai_wang2} is also viable and we outline this in Section \ref{lower_proof}. 
    
        A third argument, which we use to make the proof more self-contained, involves adapting Workman's argument as follows when orbifold points are present. We omit subscripts for simplicity and proceed using a contradiction argument. Set
    
        \begin{align*}
          I &:= \mathrm{dim}\left(\bigoplus_{\lambda \leq 0} \mathcal{E}\left(\lambda; L_f, W^{1,2}_f\left(M\right)\right)\right)\\
          \mathcal{U} &:= \mathrm{span}\left\{u_1, \dots, u_I\right\} \subset W^{1,2}_f\left(M\right)
        \end{align*}
    
        with $\left\{u_j\right\}^I_{j=1}$ orthonormal with respect to $\left<\cdot,\cdot\right>_{\omega, L^2_f\left(M\right)}$. Then we may write $W^{1,2}_f\left(M\right) = \mathcal{U} \oplus \mathcal{U}^{\perp_\omega}$. Assume for a contradiction that there is a linear subspace $\widetilde{\mathcal{U}} \subset W^{1,2}_f\left(M\right)$ with $\mathrm{dim}\left(\widetilde{\mathcal{U}}\right) = I + 1$ and $\left.\mathcal{B}_f\right|_{\widetilde{\mathcal{U}}} \leq 0$. Then the projection map $P_{\mathcal{U}} : \widetilde{\mathcal{U}} \rightarrow \mathcal{U}$ has a non-trivial kernel and there is some $v \in \widetilde{\mathcal{U}} \cap \mathcal{U}^{\perp_\omega}$ with $\left<v,v\right>_{\omega, L^2_f\left(M\right)}$. We now consider
    
        \begin{equation*}
          \widetilde{\lambda} := \inf\left\{\mathcal{B}_f\left[u,u\right] : u \in \mathcal{U}^{\perp_\omega}, \left<u,u\right>_{\omega,L^2_f\left(M\right)} = 1\right\} \leq 0.
        \end{equation*}
    
        Take a sequence $\widetilde{u}_k \in \mathcal{U}^{\perp_\omega}$ such that $\left<\widetilde{u}_k,\widetilde{u}_k\right>_{\omega,L^2_f\left(M\right)} = 1$ and
    
        \begin{equation*}
          \widetilde{\lambda} = \lim \limits_{k \rightarrow \infty} \mathcal{B}_f\left[\widetilde{u}_k,\widetilde{u}_k\right].
        \end{equation*}
    
        We want to now guarantee
    
        \begin{equation*}
          \begin{cases}
                  \widetilde{u}_k \rightharpoonup \widetilde{u}~~\mathrm{in}~W^{1,2}_f\left(M\right)\\
                  \widetilde{u}_k \rightarrow \widetilde{u}~~\mathrm{in}~L^2_f\left(M\right),
              \end{cases}
        \end{equation*}
    
        for some limit $\widetilde{u} \in W^{1,2}_f\left(M\right)$ which will act as a candidate eigenmode. To do this we derive a uniform lower bound on $\widetilde{\lambda}$ and an upper bound on the $W^{1,2}_f$-norm of $\widetilde{u}_k$ as in Section \ref{body_upper}, this time using $\left|\mathrm{Rm}_g\right|\left(x\right) \leq C'\omega\left(x\right)$ for all $x \in M$ as guaranteed by the second part of Lemma \ref{rm_wgt_ineq}. From there, we can proceed exactly as in the proof of Proposition $8$ in \cite{workman_semicontinuity} to conclude the desired result.
      \end{proof}
      
      Now for the analogous result on our ALE bubbles. For this, note that Theorem \ref{bubble_index_energy_bound} tells us the unweighted index and nullity are finite for every bubble. Also, recall that the stability operator for ALE bubbles is the usual Einstein operator $Lu = \Delta u + 2\mathrm{Rm}_h \ast u$, since the blow-up procedure yields $f \equiv C$ for some constant $C \in \mathbb{R}$ on each bubble.
    
      \begin{proposition}\label{bubble_index_equivalence_result}
        For $n \geq 4$, let $\left(V,h\right)$ be an $n$-dimensional ALE bubble as in Definition \ref{ale_bubble} with (possibly empty) singular set $\mathcal{Q}$ and let $\omega: V \rightarrow \mathbb{R}$ be a weight function on $V$ as described in Section \ref{bubble_upper}. Then
    
        \begin{align*}
          \mathrm{span}\left\{\bigcup_{\lambda \leq 0} \mathcal{E}\left(\lambda; L, W^{1,2}\left(V\right)\right)\right\} &= \bigoplus_{\lambda \leq 0} \mathcal{E}\left(\lambda; L, W^{1,2}\left(V\right)\right),\\
          \mathrm{span}\left\{\bigcup_{\lambda \leq 0} \mathcal{E}_\omega\left(\lambda; L, W^{1,2}_\omega\left(V\right),W^{1,2}\left(V\right)\right)\right\} &= \bigoplus_{\lambda \leq 0} \mathcal{E}_\omega\left(\lambda; L, W^{1,2}_\omega\left(V\right),W^{1,2}\left(V\right)\right),
        \end{align*}
    
        and
        
        \begin{equation*}
          \sum_{\lambda < 0} \mathrm{dim}\left(\mathcal{E}_\omega\left(\lambda; L, W^{1,2}_\omega\left(V\right), W^{1,2}\left(V\right)\right)\right) 
          = \sum_{\lambda < 0} \mathrm{dim}\left(\mathcal{E}\left(\lambda; L, W^{1,2}\left(V\right)\right)\right)
        \end{equation*}
      \end{proposition}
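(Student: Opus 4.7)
The first two identities assert that the eigenspaces appearing in each union are pairwise linearly independent, so that the union spans a direct sum. For the unweighted eigenspaces this is standard: given $u_j \in \mathcal{E}(\lambda_j; L, W^{1,2}(V))$ with $\lambda_1 \neq \lambda_2$, testing $\mathcal{B}[u_1,u_2]$ using either eigenmode equation yields $(\lambda_1 - \lambda_2)\langle u_1, u_2 \rangle_{L^2(V)} = 0$, so distinct eigenspaces are $L^2(V)$-orthogonal. For the weighted eigenspaces the obstruction is that the solution space $W^{1,2}_\omega(V)$ strictly contains the test space $W^{1,2}(V)$, so eigenmodes cannot be directly used as test functions. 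My plan is to approximate: given weighted eigenmodes $v_1, v_2$ with eigenvalues $\zeta_1 \neq \zeta_2$, put $\chi_R v_2 \in C^\infty_c(V) \subset W^{1,2}(V)$ for a cutoff $\chi_R$ with $\chi_R \equiv 1$ on $B_h(q,R)$, supported in $B_h(q,2R)$, and $|\nabla \chi_R| \leq C/R$. The curvature term converges by dominated convergence using $|\mathrm{Rm}_h| \leq C\omega$ (Lemma \ref{rm_wgt_ineq}, adapted as in Remark \ref{bubble_adjustment}) together with Cauchy--Schwarz in $L^2_\omega$; the stray $\nabla \chi_R$ terms vanish because $|\nabla \chi_R|^2 \leq C\omega$ on the annular support $B_h(q,2R)\setminus B_h(q,R)$ (there $\omega \geq d_h^{-2}(\cdot,q) \geq (2R)^{-2}$), and $\int_{B_h(q,2R)\setminus B_h(q,R)} \omega |v_j|^2 \to 0$ by dominated convergence. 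Passing $R \to \infty$ gives $(\zeta_1 - \zeta_2)\langle v_1, v_2 \rangle_\omega = 0$, so distinct weighted eigenspaces are $L^2_\omega$-orthogonal and the direct sum decomposition follows.

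For the third identity I establish both inequalities on the strict Morse indices. For the inequality $\sum_{\lambda<0}\dim \mathcal{E} \leq \sum_{\lambda<0}\dim \mathcal{E}_\omega$, I observe that the bubble weight satisfies $\omega \leq S^{-2}$ everywhere, so there is a continuous embedding $W^{1,2}(V) \hookrightarrow W^{1,2}_\omega(V)$. Given an $L^2(V)$-orthonormal basis $\{u_j\}_{j=1}^N$ of the strictly negative unweighted eigenspace, the span is an $N$-dimensional subspace of $W^{1,2}_\omega(V)$ on which $\mathcal{B}$ is negative definite. The same spectral framework used in Section \ref{grs_functional} (the operator $\omega^{-1}L$ is symmetric on $L^2_\omega$, and the boundedness of $\omega^{-1}\mathrm{Rm}_h$ together with the Sobolev inequality on $V$ gives a semi-bounded self-adjoint extension with compact resolvent on $L^2_\omega$ when restricted to the range of spectral projectors below a threshold) yields a min-max characterization for the weighted negative eigenvalue count, and negative-definiteness of $\mathcal{B}$ on an $N$-dimensional subspace forces the weighted index to be at least $N$.

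For the reverse inequality $\sum_{\lambda<0}\dim \mathcal{E}_\omega \leq \sum_{\lambda<0}\dim \mathcal{E}$, I take weighted eigenmodes $\{v_j\}_{j=1}^M$ orthonormal in $L^2_\omega$ with eigenvalues $\zeta_j < 0$, and set $v_j^R := \chi_R v_j \in C^\infty_c(V) \subset W^{1,2}(V)$ for the same family of cutoffs. The same estimates as in the first paragraph show $\mathcal{B}[v_j^R, v_k^R] \to \mathcal{B}[v_j, v_k] = \zeta_j \delta_{jk}$ as $R \to \infty$, so the $M \times M$ matrix $(\mathcal{B}[v_j^R, v_k^R])$ converges to $\mathrm{diag}(\zeta_j) < 0$. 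Linear independence of $\{v_j^R\}$ in $W^{1,2}(V)$ for $R$ large follows from $\langle v_j^R, v_k^R \rangle_\omega = \int_V \omega \chi_R^2 \langle v_j,v_k\rangle dV_h \to \delta_{jk}$. Consequently, for $R$ sufficiently large, $\mathrm{span}\{v_j^R\}$ is an $M$-dimensional subspace of $W^{1,2}(V)$ on which $\mathcal{B}$ is negative definite, and the min-max characterization of the unweighted Morse index (available by Theorem \ref{bubble_index_energy_bound} and the standard spectral theory for $L$ on $L^2(V)$) gives $\sum_{\lambda<0}\dim \mathcal{E} \geq M$.

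The main obstacle is the $\geq$ direction, because it requires asserting a min-max characterization for the non-standard weighted problem with mismatched solution and test spaces. I expect to handle this by showing that the restriction of $\omega^{-1}L$ to the negative part of its spectrum behaves like a compact perturbation on a suitable Hilbert space, mirroring the argument of Proposition \ref{body_equivalence_result}; alternatively, one can sidestep the issue by approximating via the exhaustion $B_h(q,R)$ (which gives classical elliptic eigenvalue problems with the standard min-max) and passing to the limit using the uniform bound on the weighted index from Theorem \ref{bubble_index_energy_bound} together with the orthogonality and cutoff estimates established above.
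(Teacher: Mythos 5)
Your plan for the first two identities (orthogonality of distinct eigenspaces via cutoff approximation and limiting) is sound and essentially self-contained, whereas the paper folds this into the reference to Workman's Proposition~9 after establishing well-definedness of $\mathcal{B}$ and $\left<\cdot,\cdot\right>_{\omega,L^2\left(V\right)}$ on $W^{1,2}_\omega\left(V\right) \times W^{1,2}\left(V\right)$ using the Lorentz estimates as in \eqref{bubble_density_estim}. Two points of concern, however.

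First, the claim that ``the bubble weight satisfies $\omega \leq S^{-2}$ everywhere'' is false on intermediate bubbles. There the weight is $\widetilde{\omega}^k_{\infty,S} = \max_\ell \widetilde{\omega}^k_{\infty,S,\ell}$, and for each orbifold point $q^\ell_\infty \in \mathcal{Q}^k$ the term $\widetilde{\omega}^k_{\infty,S,\ell}\left(x\right) = d^{-2}_{h^k}\left(x,q^\ell_\infty\right)$ is unbounded near $q^\ell_\infty$. So the pointwise argument for the embedding $W^{1,2}\left(V\right) \hookrightarrow W^{1,2}_\omega\left(V\right)$ breaks down. The embedding itself still holds, but it must be obtained from the Lorentz H\"older/Sobolev estimate (Proposition~\ref{triple_prod_estim} adapted as in Remark~\ref{bubble_adjustment}), using that $\omega \in L^{\frac{n}{2},\infty}$ near $\mathcal{Q}^k$ and is bounded away from $\mathcal{Q}^k$. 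You should make this the basis of your inequality $\sum_{\lambda<0}\dim\mathcal{E} \leq \sum_{\lambda<0}\dim\mathcal{E}_\omega$ rather than the (false) pointwise bound.

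Second, the gap you flag at the end---the absence of a min-max characterization for the weighted problem with solution space strictly larger than test space---is precisely what the paper closes with Lemma~\ref{finite_index_capture}. That lemma asserts that all of $\mathrm{Ind}\left(V\right)$ is carried by a \emph{fixed} compact set $K \subset V$, not merely that $\mathrm{Ind}\left(V\right)$ is finite. Once everything is restricted to $K$, the weight $\omega$ is bounded both above and below (being continuous, positive, and in the intermediate case controlled near $\mathcal{Q}^k$ after passing to orbifold covers), so the Dirichlet min-max principles for the weighted and unweighted problems on $K$ are classical and give the same negative eigenvalue count, and the same holds on any larger compact exhaustion set. This is what makes the comparison finish cleanly: one does not need to invoke an abstract self-adjointness or compact-resolvent framework for $\omega^{-1}L$ on a non-compact manifold. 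Your proposed exhaustion alternative is the correct instinct, but without invoking Lemma~\ref{finite_index_capture} one still has to argue that the weighted index on all of $V$ is attained at finite radius; that argument \emph{is} Lemma~\ref{finite_index_capture}, and you should cite it explicitly rather than rely on a uniform bound plus a limiting procedure.
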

    
      \begin{proof}
        We need the bi-linear form and weighted $L^2$-inner product to be well-defined on the bubbles. First, recall that $\left|\mathrm{Rm}_h\right|_h \leq C \omega$ on $V \backslash K_h$, for some compact set $K_h \subset V$. Also, for all $\varepsilon > 0$, $\omega, \mathrm{Rm}_h \in L^{\frac{n}{2},\infty}\left(\bigcup_{q \in \mathcal{Q}} B_h\left(q,\varepsilon\right)\right)$ and are each bounded on $V \backslash \bigcup_{q \in \mathcal{Q}} B_h\left(q,\varepsilon\right)$. Therefore, we can estimate as in the derivation of \eqref{bubble_density_estim} to guarantee that $\mathcal{B}\left[u,\varphi\right]$ and $\left<u,\varphi\right>_{\omega, L^2\left(V\right)}$ are well-defined, as long as $u \in W^{1,2}_\omega\left(V\right)$ and $\varphi \in W^{1,2}\left(V\right)$. Note that each $u \in W^{1,2}_{\omega}\left(V\right)$ is only in $L^2_{\mathrm{loc}}$. However, Lemma \ref{finite_index_capture} tells us we can restrict the analysis to a compact set which captures all of the index and then proceed as in the proof of Proposition $9$ of \cite{workman_semicontinuity}.
      \end{proof}
    
      We now want to show the equivalence of the weighted and unweighted nullities on the bubbles. We cannot merely proceed along the lines of \cite{workman_semicontinuity} as in the proof of Proposition \ref{bubble_index_equivalence_result}. In particular, domain monotonicity of eigenvalues means an analogue of Lemma \ref{finite_index_capture} cannot hold for the nullity. However, using Proposition \ref{ale_fredholm}, we can prove the following:
    
      \begin{proposition}\label{bubble_nullity_equivalence_result}
        For $n \geq 4$, let $\left(V,h\right)$ be an $n$-dimensional ALE bubble and consider the weight function $\omega$ on the bubble $V$ as outlined in Section \ref{bubble_upper}. Then
    
        \begin{equation*}
          \mathcal{E}_{\omega}\left(0; L, W^{1,2}_\omega\left(V\right), W^{1,2}\left(V\right)\right) = \mathcal{E}\left(0; L, W^{1,2}\left(V\right)\right).
        \end{equation*}
      \end{proposition}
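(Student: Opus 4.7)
The plan is to use Proposition \ref{ale_fredholm} as the central bridge: at the non-exceptional weight $\beta = \frac{n}{2}-1$, the kernel of $L : W^{2,2}_{\beta}(V) \to L^{2}_{\beta+2}(V)$ coincides with $\ker_{L^{2}}(L)$. So I would prove the equality by showing both sides of the desired identity coincide with $\ker_{L^{2}}(L)$, viewed inside $W^{1,2}(V)$.

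For the forward inclusion, take $u \in W^{1,2}(V)$ with $\mathcal{B}[u,\varphi]=0$ for every $\varphi \in W^{1,2}(V)$; in particular $u \in \ker_{L^{2}}(L)$, so by Proposition \ref{ale_fredholm} one obtains $u \in W^{2,2}_{n/2-1}(V)$. This gives $\int_{V}|u|^{2}\rho^{-2}\,dV_{h}<\infty$ and standard weighted $C^{0}$-bounds at infinity, which (together with $\omega \sim \rho^{-2}$ near infinity and $\omega \in L^{\infty}(V\setminus\bigcup_{q\in\mathcal{Q}}B_{h}(q,\varepsilon))$) controls $\int \omega |u|^{2}$ away from the orbifold points. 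Near each $q \in \mathcal{Q}$ we pass to a smooth local orbifold cover: the lifted $u$ satisfies an elliptic system with smooth coefficients and is therefore smooth, hence bounded; since $n \geq 3$ one has $\int_{B_{h}(q,\varepsilon)} d_{h}^{-2}(\cdot,q)\,dV_{h} < \infty$, so $\int_{B_{h}(q,\varepsilon)} \omega|u|^{2}<\infty$. Together with the $W^{1,2}$-control of $\nabla u$, this gives $u \in W^{1,2}_{\omega}(V)$.

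For the reverse inclusion, take $u \in W^{1,2}_{\omega}(V)$ with $\mathcal{B}[u,\varphi]=0$ for every $\varphi \in W^{1,2}(V)$. Since there is a compact set $K \subset V$ such that $\omega \geq c\rho^{-2}$ on $V\setminus K$, the integrability $\int_{V}\omega|u|^{2}<\infty$ forces $u \in L^{2}_{n/2-1}$ outside $K$. Combining this with interior elliptic regularity on local orbifold covers (to handle the singular points) and Bartnik-type weighted elliptic estimates at infinity for the Fredholm operator $L:W^{2,2}_{n/2-1}\to L^{2}_{n/2+1}$ then upgrades $u$ to an element of $W^{2,2}_{n/2-1}(V)$. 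Proposition \ref{ale_fredholm} gives $u \in \ker_{L^{2}}(L)$, so $u \in L^{2}(V)$; combined with $\nabla u \in L^{2}(V)$ (from the $W^{1,2}_{\omega}$-control), we conclude $u \in W^{1,2}(V)$, hence $u \in \mathcal{E}(0;L,W^{1,2}(V))$.

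The main obstacle will be the upgrade $L^{2}_{n/2-1} \Rightarrow W^{2,2}_{n/2-1}$ in the reverse direction: one has to carefully patch interior regularity on a smooth compact core (or its orbifold covers) with weighted elliptic estimates at infinity, where the ALE decay of $\mathrm{Rm}_{h}$ ensures the coefficients of $L$ have appropriate fall-off. The orbifold setting introduces the additional subtlety that the weak equation against $W^{1,2}(V)$ must be reinterpreted distributionally on smooth local covers before standard weighted elliptic theory applies; once this is done, all remaining bootstrapping is routine.
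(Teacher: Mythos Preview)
Your proposal is correct and follows essentially the same route as the paper: both arguments hinge on identifying the weight $\omega$ with the polynomial weight $\rho^{-2}$ along the ALE end, so that $W^{1,2}_{\omega}(V)$ sits inside $W^{1,2}_{n/2-1}(V)$, and then invoking Proposition~\ref{ale_fredholm} at the non-exceptional value $\beta = \tfrac{n}{2}-1$ to match the weighted kernel with $\ker_{L^{2}}(L)$. The paper is terser---it only spells out the passage $W^{1,2}_{\omega}\Rightarrow W^{1,2}_{n/2-1}\Rightarrow W^{2,2}_{n/2-1}$ (citing Bartnik's scaling argument, Proposition~1.6 of \cite{bartnik}, for the last step) and then reads off equality of kernels from Proposition~\ref{ale_fredholm}---whereas you write out both inclusions separately and are explicit about the orbifold points via local covers; but the content is the same, and the ``main obstacle'' you flag (the $L^{2}_{n/2-1}\Rightarrow W^{2,2}_{n/2-1}$ upgrade) is exactly the Bartnik estimate the paper invokes.
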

    
      \begin{proof}
        Using the definition of $W^{1,2}_\omega\left(V\right)$, we see that $u \in W^{1,2}_{\frac{n}{2} - 1}\left(V\right)$ (see Definition \ref{bubble_wgtd_sobolev}). Next, since the bubble is ALE of order $n$, one can show that $u \in W^{2,2}_{\frac{n}{2}-1}\left(V\right)$ and satisfies the estimate
    
        \begin{equation*}
          \left|\left|u\right|\right|_{W^{2,2}_{\frac{n}{2}-1}\left(V\right)} \leq C\left(\left|\left|Lu\right|\right|_{L^2_{\frac{n}{2} + 1}\left(V\right)} + \left|\left|u\right|\right|_{L^2_{\frac{n}{2} - 1}\left(V\right)} \right)
        \end{equation*}
    
        for a constant $C = C\left(n, h, S\right) > 0$. This follows similarly to the proof of Proposition $1.6$ in \cite{bartnik} through applying local $L^p$ estimates and a scaling technique. This argument can be iterated to show any $u \in \ker_{L^2_{\frac{n}{2}-1}}\left(L\right)$ is actually in $W^{k,2}_{\frac{n}{2}-1}$ for all $k \in \mathbb{N}$.
    
        Now consider our stability operator $L$ as a map between weighted Sobolev spaces:
    
        \begin{equation*}
          L: W^{2,2}_{\frac{n}{2}-1}\left(V\right) \rightarrow L^2_{\frac{n}{2} + 1}\left(V\right).
        \end{equation*}
    
        Since $n \geq 4$ we can use Proposition \ref{ale_fredholm} to deduce that the kernel of $L$ between these weighted spaces coincides with the standard $L^2$-kernel of $L$. In other words, the weighted and unweighted nullities we are considering are the same.
      \end{proof}
    
      \begin{remark}
        Let $3 \leq n \leq 6$. Then the proof of Proposition \ref{bubble_nullity_equivalence_result} carries over to Workman's setting in \cite{workman_semicontinuity}, where each bubble $V$ is a minimal hypersurface in $\mathbb{R}^{n+1}$ with finite total curvature. Lemma $4$ in \cite{tysk} tells us that the finite total curvature condition implies each end of $V$ is a hyperplane of multiplicity one. Then Proposition $3$ in \cite{schoen_reg_infty} tells us that, along each end, there are constants $C,C'> 0$ such that $\left|A\right| \leq Cr^{-n}$ and each end is a graph of some function $u$ over some plane such that $\left|u\right| \leq C'r^{-n+2}$. This means the bubbles and stability operator in Workman's setting satisfy the conditions to apply the results from \cite{bartnik} used in the proof of Proposition \ref{bubble_nullity_equivalence_result}. Furthermore, as each end of $V$ is a hyperplane, the exceptional values for the Laplacian are the same as in $\mathbb{R}^n$. This all tells us the conclusion of Proposition \ref{bubble_nullity_equivalence_result} holds for the bubbles arising in \cite{workman_semicontinuity}.
      \end{remark}
    \subsection{Proof of the Upper Semi-Continuity Estimate}
      We are now able to prove the upper semi-continuity estimate in Theorem \ref{semi_continuity_thm}, whose statement we we recall for the reader's convenience.
    
      \begin{theorem}
        For $n \geq 4$, assume $\left(M_i, g_i, f_i\right)$ is a sequence of $n$-dimensional complete connected gradient Ricci shrinkers satisfying $\left(\mathcal{A}\right)$. Then
    
        \begin{equation*}
          \limsup_{i \rightarrow \infty} \left(\mathrm{Ind}_{f_i}\left(M_i\right) + \mathrm{Null}_{f_i}\left(M_i\right)\right) \leq \mathrm{Ind}_{f_\infty}\left(M_\infty\right) + \mathrm{Null}_{f_\infty}\left(M_\infty\right) + \sum_{q \in \mathcal{Q}} \sum^{N_q}_{k=1} \mathrm{Ind}\left(V^k\right) + \mathrm{Null}\left(V^k\right).
        \end{equation*}
    
        Here the index of the bubbles is defined as $\mathrm{Ind}\left(V^k\right) := \lim \limits_{R \rightarrow \infty} \mathrm{Ind}\left(B_{h^k}\left(q^k,R\right)\right)$ and the nullity is defined analogously.
      \end{theorem}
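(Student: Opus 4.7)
The plan is to combine the machinery assembled in Sections \ref{lorentz_spaces}--\ref{equivalence} through a fairly standard injectivity argument. First I would move to the weighted setting on each $M_i$: by the final identity in Proposition \ref{body_equivalence_result},
\[
  \mathrm{Ind}_{f_i}(M_i) + \mathrm{Null}_{f_i}(M_i)
  = \sum_{\zeta_{i,\ell} < 0} \mathrm{dim}\,\mathcal{E}_{\omega_{i,S,\delta}}\!\left(\zeta_{i,\ell}; L_{f_i}, W^{1,2}_{f_i}(M_i)\right) + \mathrm{dim}\,\mathcal{E}_{\omega_{i,S,\delta}}\!\left(0; L_{f_i}, W^{1,2}_{f_i}(M_i)\right),
\]
so it suffices to prove the claimed bound with the weighted index/nullity on the left. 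Set $N := \limsup_{i \to \infty} (\mathrm{Ind}_{f_i}(M_i) + \mathrm{Null}_{f_i}(M_i))$ (assumed finite, else there is nothing to prove after passing to a suitable tail) and, after extracting a subsequence realizing the limsup, pick $u_{i,1},\dots,u_{i,N} \in W^{1,2}_{f_i}(M_i)$, orthonormal in $\langle\cdot,\cdot\rangle_{\omega_{i,S,\delta},L^2_{f_i}(M_i)}$, solving \eqref{weighted_eigenvalue_weak} with $\lambda_{i,j}\le 0$.

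Next I would apply the body and bubble region analyses of Section \ref{body_upper} and Section \ref{bubble_upper} to each $u_{i,j}$ simultaneously for $j = 1,\dots,N$. Using a standard diagonal subsequence and the fact that every bubble tree in the construction of Section \ref{bubble_tree_prelims} is exhausted (because $\sum_{q\in\mathcal{Q}} N_q = T < \infty$), I obtain, for each $j$, a limit
\[
  u_{i,j} \longrightarrow \Bigl(u_{j,\infty},\, \bigl(\widetilde{\mathbf{u}}^1_{j,\infty},\dots,\widetilde{\mathbf{u}}^{|\mathcal{Q}|}_{j,\infty}\bigr)\Bigr),
\]
with $u_{j,\infty} \in \mathcal{E}_{\omega_{\infty,\delta}}(\lambda_{j,\infty}; L_{f_\infty}, W^{1,2}_{f_\infty}(M_\infty))$, $\widetilde{u}^k_{j,\infty} \in \mathcal{E}_{\widetilde\omega^k_{\infty,S}}(\widetilde\lambda^k_{j,\infty}; L_{h^k}, W^{1,2}(V^k))$ (and similarly for each descendant bubble), and eigenvalues $\lambda_{j,\infty},\widetilde\lambda^k_{j,\infty} \le 0$.

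Now define the linear map
\[
  \Phi : \mathrm{span}\{u_{i,1},\dots,u_{i,N}\} \longrightarrow \mathcal{E}_{\le 0}(L_{f_\infty}) \oplus \bigoplus_{q\in\mathcal{Q}}\bigoplus_{k=1}^{N_q} \mathcal{E}_{\le 0}(L_{h^k}),
\]
sending $\sum a_j u_{i,j}$ to the corresponding tuple of limits, where $\mathcal{E}_{\le 0}$ denotes the direct sum of the weighted eigenspaces with eigenvalues $\le 0$ on the body and each bubble (valid by Proposition \ref{body_equivalence_result} and Proposition \ref{bubble_index_equivalence_result}). The crucial fact is that $\Phi$ is injective: if $\sum a_j^2 = 1$ then Lemma \ref{main_contradiction_arg} forces the limit tuple to be non-zero, so no non-trivial linear combination goes to zero. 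Hence $N \le \dim(\mathrm{range}(\Phi))$, which is bounded by the sum of the dimensions of the weighted $\le 0$ eigenspaces on $M_\infty$ and on each bubble (including intermediate ones).

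Finally I would convert this weighted bound back to the unweighted statement in the theorem. On the body, Proposition \ref{body_equivalence_result} (applied to $M_\infty$) gives the equivalence of weighted and unweighted eigenspace dimensions for $\lambda \le 0$, contributing $\mathrm{Ind}_{f_\infty}(M_\infty) + \mathrm{Null}_{f_\infty}(M_\infty)$. On each bubble $V^k$, Proposition \ref{bubble_index_equivalence_result} handles the strictly negative part, yielding $\mathrm{Ind}(V^k)$ after using Lemma \ref{finite_index_capture} to realize $\mathrm{Ind}(V^k) = \lim_{R\to\infty} \mathrm{Ind}(B_{h^k}(q^k,R))$, while Proposition \ref{bubble_nullity_equivalence_result} identifies the weighted kernel with the $L^2$-kernel, contributing $\mathrm{Null}(V^k)$. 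Summing over all bubbles in every bubble tree gives exactly the right-hand side of the upper semi-continuity estimate. The main obstacle I foresee is the careful simultaneous diagonal extraction that makes $\Phi$ well-defined into the direct sum across all bubbles in all trees (and the verification, via Proposition \ref{spectral_neck}, that no additional contribution leaks into neck regions)—but both are essentially packaged by Lemma \ref{main_contradiction_arg}, whose injectivity statement is precisely what is needed for the dimension count to close up.
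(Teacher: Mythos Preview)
Your proposal is correct and follows essentially the same approach as the paper: pass to the weighted problem via Proposition \ref{body_equivalence_result}, extract an $N$-dimensional orthonormal family of weighted eigenmodes, use the body/bubble analyses together with Lemma \ref{main_contradiction_arg} to show the limit map into the direct sum of weighted $\le 0$ eigenspaces is injective, and then convert back via Propositions \ref{body_equivalence_result}, \ref{bubble_index_equivalence_result}, and \ref{bubble_nullity_equivalence_result}. The only cosmetic slip is that on the bubbles the target eigenspace should be $\mathcal{E}_{\widetilde{\omega}^k_{\infty,S}}\bigl(\widetilde{\lambda}^k_{j,\infty}; L_{h^k}, W^{1,2}_{\widetilde{\omega}^k_{\infty,S}}(V^k), W^{1,2}(V^k)\bigr)$ rather than with a single $W^{1,2}(V^k)$ argument, but this does not affect the argument.
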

    
      \begin{proof}
        If $\limsup \limits_{i \rightarrow \infty} \left(\mathrm{Ind}_{f_i}\left(M_i\right) + \mathrm{Null}_{f_i}\left(M_i\right)\right) = 0$ then we are done. Thus, without loss of generality we may assume that there is some $N \geq 1$ such that
    
        \begin{equation*}
          N \leq \limsup \limits_{i \rightarrow \infty} \left(\mathrm{Ind}_{f_i}\left(M_i\right) + \mathrm{Null}_{f_i}\left(M_i\right)\right) = \limsup \limits_{i \rightarrow \infty} \left(\sum_{\lambda_i \leq 0} \mathrm{dim}\left(\mathcal{E}_{\omega_{i,S,\delta}}\left(\lambda_i; L_{f_i}, W^{1,2}_{f_i}\left(M_i\right)\right)\right)\right),
        \end{equation*}
    
        where the equality is due to Proposition \ref{body_equivalence_result}. Then, possibly after passing to a subsequence, we get a linear subspace $V_i$ such that
    
        \begin{equation*}
          V_i := \mathrm{span}\left\{u_{k,i}\right\}^N_{k=1} \subset W^{1,2}_{f_i}\left(M_i\right)
        \end{equation*}
    
        where, for each $k$, $u_{k,i}$ is the eigenmode associated to the eigenvalue $\lambda_{k,i} \leq 0$. Also, we have
    
        \begin{equation*}
          u_{k,i} \in \mathcal{E}_{\omega_{i,S,\delta}}\left(\lambda_{k,i}; L_{f_i}, W^{1,2}_{f_i}\left(M_i\right)\right)
        \end{equation*}
    
        and $\left\{u_{k,i}\right\}^N_{k=1}$ is orthonormal with respect to $\left<\cdot,\cdot\right>_{\omega_{i,S,\delta},L^2_{f_i}}$. Next, define $E_\infty$ as
    
        \begin{align*}
          E_\infty := \bigoplus_{\lambda_\infty \leq 0} &\mathcal{E}_{\omega_{\infty,S,\delta}}\left(\lambda_\infty; L_{f_\infty}, W^{1,2}_{f_\infty}\left(M_\infty\right)\right)\\
          &\oplus \bigoplus_{q \in \mathcal{Q}} \bigoplus^{N_q}_{k = 1} \bigoplus_{\widetilde{\lambda}^k_{\ell,\infty} \leq 0}\mathcal{E}_{\widetilde{\omega}^k_{\infty,S}}\left(\widetilde{\lambda}^k_{\ell,\infty}; L_{h^k}, W^{1,2}_{\widetilde{\omega}^k_{\infty, S}}\left(V^k\right), W^{1,2}\left(V^k\right)\right).
        \end{align*}
    
        Strictly speaking we should define $E_\infty$ in terms of the spans of the various eigenspaces, but the first parts of Proposition \ref{body_equivalence_result} and Proposition \ref{bubble_index_equivalence_result} show that this is equivalent to how we have expressed $E_\infty$. Define a linear map $\Pi_i$ by
    
        \begin{align*}
          \Pi_i: V_i &\rightarrow E_\infty\\
          u_{k,i} &\mapsto \left(u_{k,\infty},\left(\widetilde{\mathbf{u}}^1_{k,\infty}, \dots, \widetilde{\mathbf{u}}^{\left|\mathcal{Q}\right|}_{k,\infty}\right)\right)
        \end{align*}
    
        and
    
        \begin{equation*}
          v_i := \sum^N_{k=1} a_k u_{k,i}
        \end{equation*}
    
        with $\sum^N_{k=1} a^2_k = 1$. If
    
        \begin{equation*}
          \Pi_i\left(v_i\right) = \left(0,\left(\mathbf{0},\dots,\mathbf{0}\right)\right) 
        \end{equation*}
    
        then we would have
    
        \begin{align*}
          v_i &\rightarrow \left(0,\left(\mathbf{0},\dots,\mathbf{0}\right)\right),\\
          \int_{M_i} &\left|v_i\right|^2 \omega_{i,S,\delta} ~e^{-f_i} dV_{g_i} = 1.
        \end{align*}
    
        This contradicts Lemma \ref{main_contradiction_arg}, hence $\Pi_i$ is injective and, for $i \gg 1$,
    
        \begin{equation*}
          N = \mathrm{dim}\left(V_i\right) \leq \mathrm{dim}\left(E_\infty\right).
        \end{equation*}
    
        We then conclude by using Proposition \ref{body_equivalence_result}, Proposition \ref{bubble_index_equivalence_result}, and Proposition \ref{bubble_nullity_equivalence_result} to swap from the weighted index and nullity to the unweighted versions.
      \end{proof}
\section{Proof of the Lower Semi-Continuity Results}\label{lower_proof}
  The final section of this paper is devoted to proving the lower semi-continuity part of Theorem \ref{semi_continuity_thm}, as well as Theorem \ref{cone_lower_bound}. We now recall the first of these results for the reader's convenience. The proof of is an adaptation of an argument in \cite{karpukhin_stern}.

  \begin{theorem}\label{lower_semi_cont_proof}
    For $n \geq 4$, assume $\left(M_i, g_i, f_i\right)$ is a sequence of $n$-dimensional complete connected gradient Ricci shrinkers satisfying $\left(\mathcal{A}\right)$. Then

    \begin{equation*}
      \mathrm{Ind}_{f_\infty}\left(M_\infty\right) + \sum_{q \in \mathcal{Q}} \sum^{N_q}_{k=1} \mathrm{Ind}\left(V^k\right) \leq \liminf_{i \rightarrow \infty} \mathrm{Ind}_{f_i}\left(M_i\right).
    \end{equation*}

    Here the index of the bubbles is defined as $\mathrm{Ind}\left(V^k\right) := \lim \limits_{R \rightarrow \infty} \mathrm{Ind}\left(B_{h^k}\left(q^k,R\right)\right)$.
  \end{theorem}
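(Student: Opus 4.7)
The strategy is a transplantation/min--max argument in the spirit of Karpukhin--Stern. Let $N := \mathrm{Ind}_{f_\infty}(M_\infty) + \sum_{q \in \mathcal{Q}} \sum_{k=1}^{N_q} \mathrm{Ind}(V^k)$, which is finite by Theorem \ref{bubble_index_energy_bound} together with the hypotheses in $(\mathcal{A})$ and $\sum_{q} N_q = T < \infty$. If $N = 0$ the result is trivial, so assume $N \geq 1$. The goal is to exhibit, for all sufficiently large $i$, an $N$-dimensional subspace of $W^{1,2}_{f_i}(M_i)$ on which $\mathcal{B}_{f_i}$ is strictly negative; the variational characterisation of the $f$-index recalled at the end of Section \ref{grs_functional} then yields $\mathrm{Ind}_{f_i}(M_i) \geq N$ for such $i$, which gives the desired inequality.

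First I would assemble the negative modes on the limits. Pick an orthonormal system $\{u_j\}_{j=1}^{N_0} \subset W^{1,2}_{f_\infty}(M_\infty)$ spanning the direct sum of negative eigenspaces of $L_{f_\infty}$, with $N_0 = \mathrm{Ind}_{f_\infty}(M_\infty)$. For each bubble $(V^k, h^k)$, apply Lemma \ref{finite_index_capture} together with a density argument to obtain a compact set $K_k \subset V^k$ and a linear subspace $W_k \subset C^\infty_c(V^k)$ of dimension $\mathrm{Ind}(V^k)$, supported in $K_k$ and such that $\mathcal{B}_{h^k}[w,w] \leq -\delta_k \|w\|^2_{L^2(V^k)}$ for every $w \in W_k$ and some $\delta_k > 0$. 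Since $\mathcal{H}^{n-2}(\mathcal{Q}) = 0$, the same $2$-capacity approximation used in Section \ref{body_upper} allows me to replace each $u_j$ by $\chi_\varepsilon u_j$, where $\chi_\varepsilon$ vanishes on $\varepsilon$-neighbourhoods of $\mathcal{Q}$ and equals one outside the $2\varepsilon$-neighbourhoods; the Lorentz-space estimate of Proposition \ref{triple_prod_estim} controls the curvature contribution and shows $\mathcal{B}_{f_\infty}[\chi_\varepsilon u_j, \chi_\varepsilon u_j] \to \mathcal{B}_{f_\infty}[u_j,u_j]$ as $\varepsilon \to 0$. Thus for $\varepsilon$ small enough, $\{\chi_\varepsilon u_j\}_{j=1}^{N_0}$ remains linearly independent and $\mathcal{B}_{f_\infty}$ is still strictly negative on its span, with a uniform gap $\delta_0 > 0$.

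Next I would transplant these modes to $M_i$. The body modes $\chi_\varepsilon u_j$ (supported away from $\mathcal{Q}$) are pushed forward by the embeddings $\varphi_i : U_i \to M_i$ from Definition \ref{defn_orb_grs_conv}; since $\varphi_i^\ast g_i \to g_\infty$ and $\varphi_i^\ast f_i \to f_\infty$ in $C^\infty_{\mathrm{loc}}(M_\infty \setminus \mathcal{Q})$, for $i$ large (depending on $\varepsilon$) the support is contained in $U_i$ and $\mathcal{B}_{f_i}[(\varphi_i)_\ast \chi_\varepsilon u_j, (\varphi_i)_\ast \chi_\varepsilon u_j] \to \mathcal{B}_{f_\infty}[\chi_\varepsilon u_j, \chi_\varepsilon u_j]$. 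For each bubble choose $R_k$ with $K_k \subset B_{h^k}(q^k_\infty, R_k)$; the rescaled convergence $(M_i, \widetilde g^k_i, q^k_i) \to (V^k, h^k, q^k_\infty)$ provides diffeomorphisms $\psi^k_i : B_{h^k}(q^k_\infty, R_k) \to B_{\widetilde g^k_i}(q^k_i, R_k) \subset M_i$ with $(\psi^k_i)^\ast \widetilde g^k_i \to h^k$ smoothly. I transplant $w \in W_k$ via $w \mapsto (s^k_i)^{3-n/2}(\psi^k_i)_\ast w$ using the scaling from Section \ref{bubble_upper}, so that the bilinear form is rescale-invariant; because $f_i$ becomes asymptotically constant on the rescaled ball (the bubbles being Ricci-flat in the limit), $L_{f_i}$ converges to $L_{h^k}$ in the rescaled picture, and strict negativity of $\mathcal{B}_{h^k}$ on $W_k$ passes to the pre-limit for $i$ large.

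Finally I would verify linear independence and conclude. The transplanted body modes have support inside $M_i \setminus \bigcup_k B_{g_i}(q^k_i, \varepsilon)$, while each transplanted bubble mode has support contained in $B_{g_i}(q^k_i, R_k s^k_i)$ with $s^k_i \to 0$; thus body and bubble supports are disjoint once $R_k s^k_i < \varepsilon$. Two bubble modes attached to distinct point-scale sequences at the same $q \in \mathcal{Q}$ are either separated at the level of base points or else supported at disjoint scales, as a consequence of the separability relation \eqref{rescaled_distance} applied to $(q^k_i, s^k_i)$ and $(q^\ell_i, s^\ell_i)$; after excising child-bubble regions from each intermediate bubble's support (as in the definition of the test function in \eqref{eigenmode_sum_expr}) the supports become pairwise disjoint. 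Hence the total transplanted family spans an $N$-dimensional subspace on which $\mathcal{B}_{f_i}$ is strictly negative, giving $\mathrm{Ind}_{f_i}(M_i) \geq N$ for all large $i$ and taking $\liminf$ yields the theorem. The main obstacle is coordinating the cut-off parameter $\varepsilon$ near $\mathcal{Q}$ with the rescaled bubble radii $R_k s^k_i$ so that supports remain disjoint and all bilinear-form errors are strictly smaller than $\delta_0/2$; this is resolved by fixing $\varepsilon$, the $R_k$'s and the bubble cut-offs first in terms of $\delta_0, \delta_k$, and only then letting $i \to \infty$.
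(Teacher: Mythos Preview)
Your proposal is correct and follows essentially the same transplantation/cut-off strategy as the paper's proof (both in the spirit of Karpukhin--Stern): cut off the negative modes on $M_\infty$ away from $\mathcal{Q}$, do the same on each bubble, push everything forward to $M_i$ with pairwise disjoint supports, and read off the index bound.

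The one substantive difference is in how the bubble contribution is handled. The paper, after dealing with the body, says the full result follows by ``successively applying the above argument to each node in the bubble tree after rescaling'' and invokes the weighted eigenvalue problem together with Propositions~\ref{body_equivalence_result} and~\ref{bubble_index_equivalence_result}. You bypass the weighted machinery entirely for this direction: you use Lemma~\ref{finite_index_capture} to obtain compactly supported negative modes on each $V^k$, transplant them via the rescaled diffeomorphisms $\psi^k_i$, and observe that $\mathcal{B}_{g_i,f_i}$ and $\mathcal{B}_{\widetilde g^k_i,f_i}$ differ by a positive factor so strict negativity survives. This is a legitimate and slightly more direct route; the weighted problem is genuinely needed for upper semi-continuity but is not essential here. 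One small inaccuracy: your chosen scaling $(s^k_i)^{3-n/2}$ does not make $\mathcal{B}$ literally ``rescale-invariant'' (for symmetric $2$-tensors $\mathcal{B}_{g_i,f_i} = (s^k_i)^{n-6}\mathcal{B}_{\widetilde g^k_i,f_i}$ regardless of how $u$ is scaled), but since only the sign matters this does not affect the argument.
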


  \begin{proof}
    Let $I := \mathrm{Ind}_{f_\infty}\left(M_\infty\right)$. We then know that there is an $I$-dimensional subspace $\mathcal{U}_\infty \subset L^2_{f_\infty}\left(M_\infty\right)$ and a $\beta > 0$ such that for all $0 \not\equiv u_\infty \in \mathcal{U}_\infty$ we have

    \begin{equation*}
      \mathcal{B}_{f_\infty}\left[u_\infty,u_\infty\right] < -\beta \left|\left|u_\infty\right|\right|^2_{L^2_{f_\infty}\left(M_\infty\right)}. 
    \end{equation*}

    The idea is to now use cutoff functions to perturb $\mathcal{U}_\infty$ to a new $I$-dimensional subspace $\mathcal{U}'_\infty$ such that any $u'_\infty \in \mathcal{U}'_\infty$ vanishes on the bubble regions $\bigcup_{q_\infty \in \mathcal{Q}} B_{g_\infty}\left(q_\infty,\rho\right)$ and

    \begin{equation}\label{perturbed_bound}
      \max_{0 \neq u'_\infty \in \mathcal{U}'_\infty} \frac{\mathcal{B}_{f_\infty}\left[u'_\infty,u'_\infty\right]}{\left|\left|u'_\infty\right|\right|^2_{L^2_{f_\infty}\left(M_\infty\right)}} < -\frac{\beta}{2}. 
    \end{equation}

    We then repeat the argument on the smooth shrinkers $M_i$ so that we get an $I$-dimensional space $\mathcal{U}'_i$ consisting of elements supported away from the bubble regions. On the other hand, $\mathrm{dim}\left(\mathcal{U}'_i\right) \leq \mathrm{Ind}_{f_i}\left(M_i\right)$ by construction and domain monotonicity of the eigenvalues. One then uses the smooth convergence of the shrinker and eigenmodes away from the bubble regions and iterates the proof (after rescaling) on each bubble region to get the desired conclusion.

    We first work on the orbifold shrinker $\left(M_\infty,g_\infty,f_\infty\right)$ and consider the cut-off function $\chi_\rho$ with the following properties:

    \begin{itemize}
      \item $\chi_\rho = 1$ on $M_\infty \backslash \bigcup_{q_\infty \in \mathcal{Q}} B_{g_\infty}\left(q_\infty, \rho\right)$,\\
      \item $\chi_\rho = 0$ on $\bigcup_{q_\infty \in \mathcal{Q}} B_{g_\infty}\left(q_\infty, \frac{\rho}{2}\right)$,
      \item $\left|\nabla \chi_\rho\left(x\right)\right| \leq Cd^{-1}_{g_\infty}\left(x,\mathcal{Q}\right)$ for some universal constant $C > 0$.
    \end{itemize}

    Set $u'_\infty := \chi_\rho u_\infty$. One can show that, as $\rho \rightarrow 0$,

    \begin{equation*}
      \max \left\{\mathcal{B}_{f_\infty}\left[u'_\infty,u'_\infty\right] : u_\infty \in \mathcal{U}_\infty, \left|\left|u_\infty\right|\right|_{L^2_{f_\infty}\left(M_\infty\right)} = 1\right\} \rightarrow \max_{0 \not\equiv u_\infty \in \mathcal{U}_\infty} \frac{\mathcal{B}_{f_\infty}\left[u_\infty,u_\infty\right]}{\left|\left|u_\infty\right|\right|^2_{L^2_{f_\infty}\left(M_\infty\right)}} < -\beta.
    \end{equation*}

    Then, since linear independence is an open condition, we see that for $\rho \ll 1$ the space

    \begin{equation*}
      \mathcal{U}'_\infty := \left\{u'_\infty : u_\infty \in \mathcal{U}_\infty\right\}
    \end{equation*}

    is an $I$-dimensional subspace of $L^2_{f_\infty}\left(M_\infty\right)$, the elements of which are supported away from $\bigcup_{q_\infty \in \mathcal{Q}} B_{g_\infty}\left(q_\infty,\rho\right)$ and satisfy \eqref{perturbed_bound}. We have suppressed anydependence $\mathcal{U}'_\infty$ has on $\rho$ to keep the notation simpler.

    Now we perturb the eigenmodes along the sequence and make sure they converge to the $u'_\infty$ above. We know that $\left(M_i,g_i,f_i\right) \rightarrow \left(M_\infty,g_\infty,f_\infty\right)$ in the $C^\infty_{\mathrm{loc}}$-sense away from the bubble regions. Also, away from the bubble regions, for each eigemode $u_i$ we have $u_i \rightarrow u_\infty$ strongly in the $W^{k,2}_f$-sense for all $k \in \mathbb{N}$. This is due to Proposition \ref{weighted_sobolev_compact}, as well as Proposition $3.7$ from \cite{li_zhang_weighted_estimates} and the cut-off argument at the end of Section \ref{neck_upper}.

    Next, possibly after shrinking $\rho$, we can guarantee the existence of cut-off functions $\chi_{\rho,i}$ with the same properties as $\chi_\rho$ and $\chi_{\rho,i} \rightarrow \chi_\rho$ in the $C^1$-sense. Now, for $i \gg 1$ and $\rho \ll 1$, let $u'_i := \chi_{\rho,i} u_i$ and

    \begin{equation*}
      \mathcal{U}'_i := \left\{u'_i : u'_\infty \in \mathcal{U}'_\infty\right\}.
    \end{equation*}

    Note that, by the discussion about the convergence of eigenmodes from earlier and $\mathrm{Rm}_{g_\infty}$ being bounded away from the bubble regions regions, we have $\mathcal{B}_{f_i}\left[u'_i,u'_i\right] \rightarrow \mathcal{B}_{f_i}\left[u'_\infty,u'_\infty\right]$ as $i \rightarrow \infty$.

    \sloppy Since this convergence is uniform on the compact set $\left\{u'_\infty \in \mathcal{U}'_\infty : \left|\left|u'_\infty\right|\right|_{L^2_{f_\infty}\left(M_\infty\right)} = 1\right\}$ we see that, for $i \gg 1$ and $\rho \ll 1$, $\mathcal{U}'_i$ is an $I$-dimensional subspace of $L^2_{f_i}\left(M_i\right)$, the elements of which are supported away from $\bigcup_{q_i \in \mathcal{Q}} B_{g_i}\left(q_i,\rho\right)$ and satisfy \eqref{perturbed_bound}.

    The full theorem now follows from successively applying the above argument to each node in the bubble tree after rescaling. To accomplish this one needs to first work with the weighted eigenvalue problem \eqref{weighted_eigenvalue_weak} and then appeal to Proposition \ref{body_equivalence_result} and Proposition \ref{bubble_index_equivalence_result}.
  \end{proof}

  Now we prove Theorem \ref{cone_lower_bound}, which we also recall for the reader's convenience:

  \begin{theorem}
    \sloppy For $n \geq 4$, let $\left(M,g,f\right)$ be an $n$-dimensional complete connected gradient Ricci shrinker with finitely many ends which is asymptotically conical to the cone $\left(\mathcal{C}\left(\Sigma\right), g_{\mathcal{C}} := dr^2 + r^2 g_\Sigma\right)$ with vertex $p_{\mathcal{C}}$, where the cone link $\left(\Sigma, g_\Sigma\right)$ is an $\left(n-1\right)$-dimensional closed manifold. Then there is a continuous function $f_{\mathcal{C}}: \mathcal{C}\left(\Sigma\right) \rightarrow \mathbb{R}$ so that if $L_{f_{\mathcal{C}}}: L^2_{f_{\mathcal{C}}}\left(\mathcal{C}\left(\Sigma\right)\right) \rightarrow L^2_{f_{\mathcal{C}}}\left(\mathcal{C}\left(\Sigma\right)\right)$ is upper semi-bounded we have

    \begin{equation*}
      \mathrm{Ind}_{f_\mathcal{C}}\left(\mathcal{C}\left(\Sigma\right)\right) \leq \mathrm{Ind}_f\left(M\right).    
    \end{equation*}
  \end{theorem}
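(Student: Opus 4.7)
My plan is to adapt the cut-off and transport strategy in the proof of Theorem \ref{lower_semi_cont_proof}, with the orbifold body $M_\infty\setminus\mathcal{Q}$ replaced by $\mathcal{C}(\Sigma)\setminus\{p_\mathcal{C}\}$. First I would identify $f_\mathcal{C}$: along each end of $M$, Proposition 2.1(3) of \cite{kw_conical_rigidity} gives $|\mathrm{Ric}_g|=O(r^{-2})$, so passing to the blow-down limit under $\lambda^{-2}\rho_\lambda^\ast\Phi^\ast$ in \eqref{grs_eqn} forces $\mathrm{Ric}(g_\mathcal{C})=0$; then \eqref{grs_auxiliary} yields $f\circ\Phi(r,x)=r^2/4+\mu(g)+o(1)$ as $r\to\infty$. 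Since the $f$-index is unchanged under additive constants in $f$, the natural choice is $f_\mathcal{C}(r,x):=r^2/4$, which is continuous on $\mathcal{C}(\Sigma)$ with $f_\mathcal{C}(p_\mathcal{C})=0$; a direct Hessian calculation on the Ricci-flat cone verifies $\nabla^2 f_\mathcal{C}=g_\mathcal{C}/2$, so $(g_\mathcal{C},f_\mathcal{C})$ is a bona fide gradient shrinker.

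Assuming upper semi-boundedness of $L_{f_\mathcal{C}}$, the functional analytic setup of Section \ref{grs_functional} (adapted near $p_\mathcal{C}$ via orbifold-type local covers, as in Remark \ref{orbifold_remark}) yields a finite $I := \mathrm{Ind}_{f_\mathcal{C}}(\mathcal{C}(\Sigma))$ and an $I$-dimensional subspace $\mathcal{U}_\mathcal{C}\subset W^{1,2}_{f_\mathcal{C}}$ with $\mathcal{B}_{f_\mathcal{C}}[u,u]<-\beta\|u\|^2_{L^2_{f_\mathcal{C}}}$ for some $\beta>0$. Following the proof of Theorem \ref{lower_semi_cont_proof} closely, I would combine a $2$-capacity cut-off near $p_\mathcal{C}$ with the drift-Laplacian cut-off $\varphi^r$ of \cite{li_zhang_weighted_estimates} at infinity to perturb $\mathcal{U}_\mathcal{C}$ to an $I$-dimensional subspace $\mathcal{U}'_\mathcal{C}$ of tensors compactly supported in an annulus $\{\delta\le r\le r_0\}\subset E_R$, still satisfying $\mathcal{B}_{f_\mathcal{C}}<-(\beta/2)\|\cdot\|^2_{L^2_{f_\mathcal{C}}}$. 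The vertex cut-off error is controlled via Proposition \ref{triple_prod_estim} (extended to cones per Remark \ref{bubble_adjustment}) together with $|\mathrm{Rm}_{g_\mathcal{C}}|\lesssim d_{g_\mathcal{C}}^{-2}(\cdot,p_\mathcal{C})$ lying in $L^{n/2,\infty}_{f_\mathcal{C}}$ locally (Remark \ref{distance_lorentz}), and the error at infinity by the Gaussian decay of $e^{-f_\mathcal{C}}$.

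Each $u_\mathcal{C}\in\mathcal{U}'_\mathcal{C}$ is then transported to $M$ by $u:=(\Phi^{-1})^\ast u_\mathcal{C}$, extended by zero. On the annular support $\Phi^\ast g-g_\mathcal{C}\to 0$ by Definition \ref{asymp_con_defn} and $\Phi^\ast f-f_\mathcal{C}\to \mu(g)$ with vanishing error by the expansion above; since the additive constant $\mu(g)$ cancels in the ratio $\mathcal{B}_f[u,u]/\|u\|^2_{L^2_f(M)}$, this ratio converges to the corresponding cone ratio, uniformly on the unit sphere of $\mathcal{U}'_\mathcal{C}$, once the support is pushed sufficiently deep into $E_R$. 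Hence the image is an $I$-dimensional subspace of $W^{1,2}_f(M)$ on which $\mathcal{B}_f<-(\beta/4)\|\cdot\|^2_{L^2_f(M)}$, and the variational characterization from Section \ref{grs_functional} gives $I\le\mathrm{Ind}_f(M)$; several ends are handled end-by-end with disjointly supported tensors.

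The main obstacle is that the Gaussian weight $e^{-f_\mathcal{C}}$ concentrates $L^2_{f_\mathcal{C}}$-mass of negative eigentensors near $p_\mathcal{C}$, exactly the region that must be removed before transporting to $M$. Preserving the full dimension $I$ through the cut-off requires a unique-continuation-type density argument for $\mathcal{U}_\mathcal{C}$ on $\mathcal{C}(\Sigma)\setminus\{p_\mathcal{C}\}$, which is precisely where the upper semi-boundedness hypothesis is essential: it guarantees that the negative spectrum is discrete and cannot accumulate at $p_\mathcal{C}$ in a manner that escapes compactly supported approximation. A subsidiary technical point is extracting enough decay in $\Phi^\ast f-(r^2/4+\mu(g))$ from \eqref{grs_auxiliary}, \eqref{potential_growth}, and the $O(r^{-2})$ Ricci decay to make the exponential weights genuinely comparable on the transported annular supports.
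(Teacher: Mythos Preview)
Your transport via the asymptotic diffeomorphism $\Phi:E_R\to\mathcal{E}$ has a genuine gap. To pull back by $\Phi$ you must first cut the cone eigentensors to be supported in $E_R=(R,\infty)\times\Sigma$, i.e.\ at radius at least the \emph{fixed} $R$ dictated by the asymptotic chart. But the shrinker structure $(g_\mathcal{C},f_\mathcal{C})$ with $f_\mathcal{C}=r^2/4$ has no dilation invariance: under $r\mapsto\lambda r$ the metric scales while $f_\mathcal{C}$ does not, so you cannot slide the negative eigenspace outward without destroying the sign of $\mathcal{B}_{f_\mathcal{C}}$. Your appeal to unique continuation only says the eigentensors do not vanish on open sets; it gives no mechanism for recovering the full $I$-dimensional negativity after discarding the fixed ball $\{r\le R\}$, which is exactly where the Gaussian weight $e^{-r^2/4}$ forces the $L^2_{f_\mathcal{C}}$-mass (and typically the curvature term $\mathrm{Rm}_{g_\mathcal{C}}\ast u$) to concentrate.

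The paper circumvents this by not using $\Phi$ at all. Instead it exploits the self-similar flow $g(t)=-t\,\varphi_t^\ast g(-1)$: for any sequence $t_i\nearrow 0$ the shrinkers $(M,g_i,f_i):=(M,\,g(t_i),\,-t_i f(t_i))$ converge to $(\mathcal{C}(\Sigma),g_\mathcal{C},f_\mathcal{C})$ in the pointed Gromov--Hausdorff sense globally and smoothly away from the vertex, while $\mathrm{Ind}_{f_i}(M_i)=\mathrm{Ind}_f(M)$ is constant by self-similarity. This turns the problem into a single-point version of the orbifold body analysis: the cone's negative eigentensors need only be cut off on balls $B_{g_\mathcal{C}}(p_\mathcal{C},\varepsilon_i)$ with $\varepsilon_i\to 0$ before transporting to $M_i$ via the Cheeger--Gromov maps, and the argument closes by a contradiction on linear (in)dependence as in \cite{barbosa_sharp_wei}. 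The missing idea in your approach is precisely this passage through the flow, which replaces the fixed obstruction radius $R$ by a shrinking one.
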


  \begin{proof}
    Recall that every gradient Ricci shrinker induces an ancient Ricci flow $g\left(t\right) = -t \varphi^\ast_t g\left(-1\right)$, where $t \in \left(-\infty,0\right)$ and $\varphi_t$ is the family of diffeomorphisms generated by $\frac{\nabla f}{-t}$. Also, the associated time dependent potential function is $f\left(t\right) = \varphi^\ast_t f$.

    Now, if $M$ is asymptotically conial we have

    \begin{equation*}
      \left(M, g\left(t\right), -tf\left(t\right), p_t\right) \rightarrow \left(\mathcal{C}\left(\Sigma\right), g_{\mathcal{C}}, f_{\mathcal{C}}, p_{\mathcal{C}}\right)
    \end{equation*} 

    as $t \nearrow 0$, where $p_t$ is a minimum of $f\left(t\right)$. This convergence is globally in the pointed Gromov--Hausdorff sense and the smooth pointed Cheeger--Gromov sense away from the cone vertex $p_{\mathcal{C}}$ (Proposition $2.1(3)$ in \cite{kw_conical_rigidity}), which tells us $\mathcal{C}\left(\Sigma\right)$ is a regular cone. That is, it is smooth away from the vertex. We also have $f_{\mathcal{C}} = \lim_{t \nearrow 0} t f\left(t\right)$ where the convergence is in the $C^\infty_{\mathrm{loc}}$-sense away from the cone vertex and uniformly in the $C^0$-sense on all of $\mathcal{C}\left(\Sigma\right)$. Moreover, outside a compact set, $f_{\mathcal{C}} = \frac{r^2}{4}$, has no critical points, and 

    \begin{align}
      \Delta f_{\mathcal{C}} &= \frac{n}{2} \label{cone_traced},\\
      \left|\nabla f_{\mathcal{C}}\right|^2 &= f_{\mathcal{C}}\label{cone_auxiliary}.      
    \end{align}

    We also note that the proof of \eqref{cone_traced} and \eqref{cone_auxiliary} involve considering the corresponding versions of \eqref{grs_traced} and \eqref{grs_auxiliary} on $\left(M_i, g_i, f_i\right)$ and considering the limit as $t \nearrow 0$.
    For a detailed proof of these results we refer the reader to the proof of Theorem $2.1$ in \cite{kw_conical_rigidity} as well as Section $4.3$ of \cite{siepmann_thesis}. This latter reference concerns expanding gradient Ricci solitons, but the methods carry over to the shrinker setting and are similar to those in \cite{kw_conical_rigidity}.

    By considering a sequence of times $t_i \nearrow 0$ we can produce a sequence of Ricci shrinkers converging to $\mathcal{C}\left(\Sigma\right)$ as above:

    \begin{equation*}
      \left(M_i, g_i:= -t_i \varphi^\ast_{t_i}g\left(-1\right), f_i := -t_i f\left(t_i\right), p_i\right) \rightarrow \left(\mathcal{C}\left(\Sigma\right), g_{\mathcal{C}}, f_{\mathcal{C}}, p_{\mathcal{C}}\right)
    \end{equation*}

    where $p_i$ is a minimum of $f_i$ as usual.

    Now, as in the proof of Corollary $4.2$ in \cite{dai_wang1}, since $L_{f_{\mathcal{C}}}: L^2_{f_{\mathcal{C}}}\left(\mathcal{C}\left(\Sigma\right)\right) \rightarrow L^2_{f_{\mathcal{C}}}\left(\mathcal{C}\left(\Sigma\right)\right)$ is upper semi-bounded by assumption it admits a self-adjoint extension with domain lying in $W^{1,2}_{f_{\mathcal{C}}, \omega_{\infty,\delta}}\left(\mathcal{C}\left(\Sigma\right)\right)$, which is defined analogously to the weighted space $W^{1,2}_{\widetilde{\omega}^k_{\infty,S}}\left(V^k\right)$ from Section \ref{bubble_upper}. On the other hand, one can show the associated weighted norm is equivalent to the usual Sobolev norm on $W^{1,2}_{f_{\mathcal{C}}}\left(\mathcal{C}\left(\Sigma\right)\right)$ by using a Hardy inequality as in \cite{dai_wang2} and that $e^{-f_{\mathcal{C}}} dV_{g_{\mathcal{C}}}$ and $dV_{g_{\mathcal{C}}}$ are equivalent on compact subsets, in this case $B_{g_{\mathcal{C}}}\left(p_{\mathcal{C}},\delta\right)$. Moreover, $W^{1,2}_{f_{\mathcal{C}}}\left(\mathcal{C}\left(\Sigma\right)\right)$ compactly embeds into $L^2_{f_{\mathcal{C}}}\left(\mathcal{C}\left(\Sigma\right)\right)$. This follows from a slight variation of the proof of Proposition \ref{weighted_sobolev_compact} involving the use of \eqref{cone_traced} and \eqref{cone_auxiliary}.

    Putting everything together, one can show the analogue of Theorem \ref{spectral_theorem} holds for $L_{f_{\mathcal{C}}}$ on the cone $\mathcal{C}\left(\Sigma\right)$. Importantly, this tells us that $\mathrm{Ind}_{f_{\mathcal{C}}}\left(\mathcal{C}\left(\Sigma\right)\right) < \infty$ and we have a variational characterization of the eigenvalues. Therefore, we may appeal to (the proof of) Proposition \ref{body_equivalence_result} to ensure the weighted and unweighted indexes are equal in the following. Note this also uses that Lemma \ref{rm_wgt_ineq} holds on $\mathcal{C}\left(\Sigma\right)$ as mentioned in Remark \ref{bubble_adjustment}.

    We can now proceed along the lines of part of the proof of Theorem $1.3$ in \cite{barbosa_sharp_wei}. Note first that for all $t < 0$, $I := \mathrm{Ind}_f\left(M\right) = \mathrm{Ind}_{-tf\left(t\right)}\left(M\right)$ because of the self-similarity of the Ricci flow induced by the shrinker (though the eigenvalues themselves could change in magnitude). The only time the index can change is at the singular time $t = 0$. Suppose for a contradiction that $\mathrm{Ind}_{f_\mathcal{C}}\left(\mathcal{C}\left(\Sigma\right)\right) \geq I + 1$. The proof of Lemma \ref{finite_index_capture} in \cite{tysk} tells us there is some $R \gg 1$ such that $\mathrm{Ind}_{f_\mathcal{C}}\left(B_{g_{\mathcal{C}}}\left(p_\mathcal{C}, \rho\right)\right) \geq I+1$ for all $\rho \geq R$. By (the proof of) Proposition \ref{body_equivalence_result} this also holds for the weighted index as mentioned earlier. Thus there are symmetric $2$-tensors $u^k$ that are compactly supported in $B_{g_{\mathcal{C}}}\left(p_\mathcal{C}, R\right)$ such that 

    \begin{align}
      \left<u^k,u^\ell\right>_{\omega_{\infty,\delta}, L^2_{f_\mathcal{C}}\left(\mathcal{C}\left(\Sigma\right)\right)} &= \delta_{k\ell} \label{cone_orthnorm}\\
      \mathcal{B}_{f_{\mathcal{C}}}\left[u^k,u^k\right] &= \lambda^k < 0 \nonumber
    \end{align}

    for all $k,l = 1,\dots,I+1$. Extend each $u^k$ to all of $\mathcal{C}\left(\Sigma\right)$ in an arbitrary $C^1$ fashion and denote by $\left\{u^k_i\right\}^{I+1}_{k=1}$ the eigenmodes on $M_i$ which converge, at least on $B_{g_{\mathcal{C}}}\left(p_\mathcal{C}, R\right)$, to $\left\{u^k\right\}^{I+1}_{k=1}$. This can be made more precise by repeating the analysis in Section \ref{body_upper} and noting we can also find compactly supported eigenmodes on each $M_i$ by the proof of Lemma \ref{finite_index_capture} in \cite{tysk}.

    Next, without loss of generality, we can assume the ordering $\lambda^1_i \leq \lambda^2_i \leq \dots < \lambda^{I+1}_i < 0$ for $i \gg 1$. We then take $\chi: \mathbb{R}_+ \rightarrow \mathbb{R}_+$ to be a cut-off function such that, for $R' > 0$,

    \begin{itemize}
      \item $0 \leq \chi \leq 1$,
      \item $\chi\left(\rho\right) = 1$ for $\rho \in \left[0,R'\right]$,
                \item $\chi\left(\rho\right) = 0$ for $\rho \in \left[2R',\infty\right)$,
      \item $\left|\chi'\right| \leq \frac{C}{R'}$ for a universal constant $C > 0$,
    \end{itemize}

    Then, for $R' \gg R$, $\mathcal{B}_{f_i}\left[\chi u^k_i, \chi u^k_i\right] \leq \frac{\lambda^{I+1}_i}{2} < 0$ for each $k$ and all $i \gg 1$. Since the support of each $\chi u^k_i$ lies in $B_{g_i}\left(p_i, 2R'\right)$ and $\mathrm{Ind}_{f_i}\left(M_i\right) = I$, the set $\left\{\chi u^k_i\right\}^{I+1}_{k=1}$ must be linearly dependent for all $i \gg 1$. We can then assume, without loss of generality, that

    \begin{equation*}
      \chi u^{I+1}_i = a^1_i \chi u^1_i + \dots + a^{I+1}_i \chi u^{I+1}_i
    \end{equation*}

    with each $\left|a^k_i\right| \leq 1$. This all yields

    \begin{equation*}
      \lim \limits_{i \rightarrow \infty} \left<\chi u^k_i, \chi u^\ell_i\right>_{\omega_{i,S,\delta},L^2_{f_i}\left(M_i\right)} = \left<\chi u^k, \chi u^\ell\right>_{\omega_{\infty,\delta},L^2_{f_{\mathcal{C}}}\left(\mathcal{C}\left(\Sigma\right)\right)} = \delta_{k\ell}
    \end{equation*}

    for each $k,\ell = 1, \dots, I+1$. Using this and the linear dependence of $\left\{\chi u^k_i\right\}^{I+1}_{k=1}$ we have, for all $k$,

    \begin{equation*}
      0 = \lim \limits_{i \rightarrow \infty} \left<\chi u^k_i, \chi u^\ell_i\right>_{\omega_{i,S,\delta},L^2_{f_i}\left(M_i\right)} = \lim \limits_{i \rightarrow \infty} a^k_i.
    \end{equation*}

    However, this means that

    \begin{equation*}
      \lim \limits_{i \rightarrow \infty} \int_{M_i} \omega_{i,S,\delta} \left|\chi u^{I+1}_i\right|^2~e^{-f_i} dV_{g_i} = \lim \limits_{i \rightarrow \infty} \sum^I_{k=1} \left|a^k_i\right|^2 \int_{M_i} \omega_{i,S,\delta} \left|\chi u^k_i\right|^2 e^{-f_i}~dV_{g_i} = 0,
    \end{equation*}

    which contradicts \eqref{cone_orthnorm}. Thus, after appealing to (the proof of) Proposition \ref{body_equivalence_result}, $\mathrm{Ind}_{f_{\mathcal{C}}}\left(\mathcal{C}\left(\Sigma\right)\right) \leq \mathrm{Ind}_f\left(M\right) = I$ as desired.
  \end{proof}

  \begin{remark}
    It would of course be desirable to not have to assume $L_{f_{\mathcal{C}}}: L^2_{f_{\mathcal{C}}}\left(\mathcal{C}\left(\Sigma\right)\right) \rightarrow L^2_{f_{\mathcal{C}}}\left(\mathcal{C}\left(\Sigma\right)\right)$ is upper semi-bounded, or more vitally that it admits a self-adjoint extension. However, proving this directly seems to be a delicate matter. In particular, there is a Schr\"odinger type operator with potential $\psi$ such that $0 \leq \psi\left(r\right) + \frac{\left(n-1\right)\left(n-3\right)}{4r^2} \leq \frac{c}{r^2}$ for $c < \frac{3}{4}$ that \textit{does not} admit a self-adjoint extension on $C^\infty_c\left(\mathbb{R}^n \backslash \left\{0\right\}\right)$. For details, we refer the reader to Theorem $X.11$ in \cite{reed_simon2}. Since $L_{f_\mathcal{C}}$ could have a similar form on a cone, it is not entirely clear to the author how to bypass this issue or adapt other arguments in this paper, at least in an easy/straightforward manner. Other than directly finding examples of shrinkers for which $L_{f_{\mathcal{C}}}$ is upper semi-bounded, a couple ways forward could be to find certain conditions on the cone link so that the techniques of Dai--Wang in \cite{dai_wang1} can be adapted to treat the drift Lichnerowicz Laplacian on cones, or  which guarantee ``tangential stability'' (see \cite{kroncke_vertman}).
  \end{remark}

\textbf{Statements and Declarations:} No conflicts of interests to declare.


\begin{thebibliography}{100}
    \bibitem{bamler_compactness}
    Bamler, Richard H. \textit{Compactness Theory of the Space of Super Ricci Flows}. Invent. Math. Vol. 231, pp. 1121-1277 (2023).

    \bibitem{bamler_nash}
    Bamler, Richard H. \textit{Entropy and Heart Kernel Bounds on a Ricci Flow Background}. Preprint. arXiv:2008.07093v3.

    \bibitem{bamler_structure}
    Bamler, Richard H. \textit{Structure Theory of Non-Collapsed Limits of Ricci Flows}. Preprint. arXiv: 2009.03243v2.

    \bibitem{barbosa_sharp_wei}
    Barbosa, Ezequiel; Sharp, Ben; Wei, Yong. \textit{Smooth Compactness of $f$-Minimal Hypersurfaces with Bounded $f$-Index}. Proc. Amer. Math. Soc. Vol. 145, No. 11, pp. 4945-4961 (2017).

    \bibitem{bartnik}
    Bartnik, Robert. \textit{The Mass of an Asymptotically Flat Manifold}. Comm. Pure App. Math. Vol. 39, No. 5, pp. 661-693 (1986).

    \bibitem{berard_besson}
    B\'erard, Pierre; Besson, Gerard. \textit{Number of Bound States and Estimates on Some Geometric Invariants}. J. Funct. Anal. Vol. 92, No. 2, pp. 375--396 (1990).

    \bibitem{bert_buz}
    Bertellotti, Alessandro; Buzano, Reto. \textit{Ends of (Singular) Ricci Shrinkers}. Preprint. arXiv: 2407.04131v1.

    \bibitem{by_bubbling}
    Buzano, Reto; Yudowitz, Louis. \textit{Bubble Tree Convergence and Local Diffeomorphism Finiteness for Gradient Ricci Shrinkers}. Math. Z. Vol. 304, No. 7 (2023).

    \bibitem{CS07_compactness}
    Cao, Huai-Dong and \v{S}e\v{s}um, Natasa. \textit{A Compactness Result for K\"ahler-Ricci Solitons}. 
    Adv. Math. Vol. 211, No. 2, pp. 794--818 (2007).

    \bibitem{cao_zhu_lin_stable}
    Cao, Huai-Dong; Zhu, Meng. \textit{Linear Stability of Compact Gradient Ricci Shrinking Solitons}. Math. Ann. (2024).

    \bibitem{carillo_ni}
    Carillo, Jos\'e; Ni, Lei. \textit{Sharp Logarithmic Sobolev Inequalities on Gradient Solitons and Applications}. Comm. Anal. Geom. Vol. 17, No. 4, pp. 1-33 (2010).

    \bibitem{carron}
    Carron, Gilles. \textit{$L^2$-Cohomologie et Inegalit\'es de Sobolev}. Math. Ann. Vol. 314, pp. 613-639 (1999).

    \bibitem{castillo_chaparro}
    Castillo, Ren\'e; Chaparro, H\'ector. \textit{Classical and Multidimensional Lorentz Spaces}. DeGruyter (2021).

    \bibitem{rf_tech_appsI}
    Chow, Bennett; Chu, Sun-Chin; Glickenstein, David; Guenther, Christine; Isenberg, James; Ivey, Tom; Knopf, Dan; Lu, Peng; Luo, Feng; Ni, Lei. \textit{The Ricci Flow: Techniques and Applications. Part I: Geometric Aspects}. Mathematical Surveys and Monographs Vol. 135, American Mathematical Society, Providence, RI (2007).

    \bibitem{dai_honda_pan_wei}
    Dai, Xiangzhe; Honda, Shouhei; Pan, Jiayin; Wei, Guofang. \textit{Singular Weyl’s Law with Ricci Curvature Bounded Below}. Trans. Amer. Math. Soc. Series B Vol. 10, No. 34, pp. 1212-1253 (2023).

    \bibitem{dai_wang1}
    Dai, Xiangzhe; Wang, Changliang. \textit{Perelman’s $\lambda$-Functional on Manifolds with Conical Singularities}. J. Geom. Anal. Vol. 28, pp. 3657-3689 (2018).

    \bibitem{dai_wang2}
    Dai, Xiangzhe; Wang, Changliang. \textit{Perelman’s W-functional on Manifolds with Conical Singularities}. Math. Res. Lett. Vol. 27, No. 3, pp. 665-685 (2020).

    \bibitem{dlgr_upper_semicontinuity}
    Da Lio, Francesca; Gianocca, Matilde; Rivi\`ere, Tristan. \textit{Morse Index Stability for Critical Points to Conformally Invariant Lagrangians}. Preprint. arXiv: 2212.03124v2.

    \bibitem{dalio_gianocca}
    Da Lio, Francesca; Gianocca, Matilde. \textit{Morse Index Stability for the Ginzburg-Landau Approximation}. Preprint. arXiv:2406.07317v2.

    \bibitem{deruelle_soliton_stability}
    Deruelle, Alix. \textit{Stability of Non Compact Steady and Expanding Gradient Ricci Solitons}. Calc. Var. Vol. 54, pp. 2367-2405 (2015).
    
    \bibitem{der_ozuch_loj}
    Deruelle, Alix; Ozuch, Tristan. \textit{A \L ojasiewicz Inequality for ALE Metrics}. Ann. Sc. Norm. Sup. Pisa (2023).

    \bibitem{der_ozuch_stability}
    Deruelle, Alix; Ozuch, Tristan. \textit{Dynamical (In)Stability of Ricci-Flat ALE Metrics Along the Ricci Flow}. Calc. Var. Vol. 62, No. 84 (2023).

    \bibitem{emt}
    Enders, Joerg; M\"uller, Reto; Topping, Peter. \textit{On Type-I Singularities in Ricci Flow}. Comm. Anal. Geom. Vol. 19, No. 5, pp. 905–922 (2011).

    \bibitem{grs_topology}
    Fang, Fuquan; Man, Jianwen; Zhang, Zhenlei Zhang. \textit{Complete Gradient Shrinking Ricci Solitons Have Finite Topological Type}. C. R. Acad. Sci. Paris, Ser. I Vol. 346, pp. 653–656 (2008).

    \bibitem{farsi}
    Farsi, Carla. \textit{Dirac Operators on Non-Compact Orbifolds}. J. Geom. Phys. Vol. 59, pp. 197–206 (2009).

    \bibitem{gauvrit_laurain}
    Gauvrit, Mario; Laurain, Paul. \textit{Morse Index Stability for Yang-Mills Connections}. Preprint. arXiv:2402.09039v1.

    \bibitem{giaquinta}
    Giaquinta, Mariano; Martinazzi, Luca. \textit{An Introduction to the Regularity Theory for Elliptic Systems, Harmonic Maps and Minimal Graphs}. Scuola Normale Superiore Pisa, Seconda Edizione (2012).

    \bibitem{hall_murphy_stability}
    Hall, Stuart; Murphy, Thomas. \textit{On the Linear Stability of K\"ahler-Ricci Solitons}. Proc. Amer. Math. Soc. Vol. 139, No. 9, pp. 3327-3337 (2011).

    \bibitem{HM1}
    Haslhofer, Robert; M\"uller, Reto. \textit{A Compactness Theorem for Complete Ricci Shrinkers}. Geom. Funct. Anal. Vol. 21, pp. 1091--1116 (2011).

    \bibitem{HM2}
    Haslhofer, Robert; M\"uller, Reto. \textit{A Note on the Compactness Theorem for $4$d Ricci Shrinkers}. Proc. Amer. Math. Soc. Vol. 143, No. 10, pp. 4433--4437 (2015).

    \bibitem{HM_stability}
    Haslhofer, Robert; M\"uller, Reto. \textit{Dynamical Stability and Instability of Ricci-Flat Metrics}. Math. Ann. Vol. 360, pp. 547-553 (2014).

    \bibitem{hirsch_lamm}
    Hirsch, Jonas; Lamm, Tobias. \textit{Index Estimates for Sequences of Harmonic Maps}. Preprint. arXiv: 2212.13808.

    \bibitem{karpukhin_stern}
    Karpukhin, Mikhail; Stern, Daniel. \textit{Min-Max Harmonic Maps and a New Characterization of Conformal Eigenvalues}. J. Eur. Math. Soc. Vol. 26, No. 11, pp. 4071–4129 (2024).

    \bibitem{kw_conical_rigidity}
    Kotschwar, Brett; Wang, Lu. \textit{Rigidity of Asymptotically Conical Shrinking Gradient Ricci Solitons}. J. Diff. Geom Vol. 100, pp. 55-108 (2015).

    \bibitem{kroncke_yudowitz_stability}
    Kr\"oncke, Klaus; Yudowitz, Louis. \textit{Dynamical Stability and Instability of Poincar\'e–Einstein Manifolds}. Preprint. arXiv: 2312.13011.

    \bibitem{kroncke_szabo}
    Kr\"oncke, Klaus; Szab\'o, \'Aron. \textit{Optimal Coordinates for Ricci-Flat Conifolds}. Calc. Var. Vol. 63, No. 188 (2024).

    \bibitem{kroncke_einstein_stability}
    Kr\"oncke, Klaus. \textit{Stability of Einstein metrics under Ricci flow}. Comm. Anal. Geom. Vol. 28, No. 2, pp. 351-394 (2020).

    \bibitem{kroncke_soliton_stability}
    Kr\"oncke, Klaus. \textit{Stability and Instability of Ricci Solitons}. Calc. Var. Vol. 53, pp. 265-287 (2015).

    \bibitem{kroncke_vertman}
    Kr\"oncke, Klaus; Vertman, Boris. \textit{Stability of Ricci de Turck Flow on Singular Spaces}. Calc. Var. Vol. 58, No. 74 (2019).

    \bibitem{li_li_wang_grs}
    Li, Haozhao; Li, Yu; Wang, Bing. \textit{On the Structure of Ricci Shrinkers}. J. Funct. Anal. Vol. 280, No. 9 (2021).

    \bibitem{heat_kernel_grs}
    Li, Yu; Wang, Bing. \textit{Heat Kernel on Ricci Shrinkers}. Calc. Var. Vol. 59, No. 194 (2020).

    \bibitem{li_wang_4d_kahler}
    Li, Yu; Wang, Bing. \textit{On K\"ahler Ricci Shrinker Surfaces}. Preprint. arXiv:2301.09784.

    \bibitem{li_zhang_weighted_estimates}
    Li, Yu; Zhang, Wenjia. \textit{On the Rigidity of Ricci Shrinkers}. Preprint. arXiv: 2305.06143v1.

    \bibitem{mazya}
    Maz'ya, Vladimir. \textit{Sobolev Spaces with Applications to Elliptic Partial Differential Equations}. 2\textsuperscript{nd} Edition, Grund. der Math. Wissen. Vol. 342. Springer, Heidelberg (2011).

    \bibitem{michelat}
    Michelat, Alexis. \textit{Morse Index Stability of Biharmonic Maps in Critical Dimension}. Preprint. arXiv: 2312.07494.

    \bibitem{michelat_riviere}
    Michelat, Alexis; Rivi\`ere, Tristan. \textit{Morse Index Stability of Willmore Immersions I}. Preprint. arXiv: 2306.04608.

    \bibitem{mw_conical}
    Munteanu, Ovidiu; Wang, Jiaping. \textit{Geometry of Shrinking Ricci Solitons}. Compositio Math. Vol. 151, pp. 2273-2300 (2015).

    \bibitem{reed_simon2}
    Reed, Michael; Simon, Barry. \textit{Methods of Modern Mathematical Physics II: Fourier Analysis, Self-Adjointness}. Academic Press (1978).

    \bibitem{reed_simon}
    Reed, Michael; Simon, Barry. \textit{Methods of Modern Mathematical Physics IV: Analysis of Operators}. Academic Press (1978).

    \bibitem{riesz_nagy}
    Riesz, Frigyes; Sz\H{o}kefalvi-Nagy, B\'ela. \textit{Functional Analysis}. 2\textsuperscript{nd} Edition. Translated by Leo F. Boron. Blackie and Son Limited. London and Glasgow (1956).

    \bibitem{saloff_coste}
    Saloff-Coste, Laurent. \textit{Aspects of Sobolev-Type Inequalities}. London Mathematical Society Lecture Note Series, vol. 289, Cambridge University Press, Cambridge (2002).

    \bibitem{schoen_reg_infty}
    Schoen, Richard. \textit{Uniqueness, Symmetry, and Embeddedness of Minimal Surfaces}. J. Diff. Geom. Vol. 18, pp. 791-809 (1983).

    \bibitem{siepmann_thesis}
    Siepmann, Michael. \textit{Ricci Flows of Ricci-Flat Cones}. PhD thesis. \url{https://doi.org/10.3929/ethz-a-010002373} (2013).
    
    \bibitem{stolarski_grs}
    Stolarski, Maxwell. \textit{Closed Ricci Flows with Singularities Modeled on Asymptotically Conical Shrinkers}. Preprint. arXiv: 2202.03386v2.

    \bibitem{tysk}
    Tysk, Johan. \textit{Finiteness of Index and Total Scalar Curvature for Minimal Hypersurfaces}. Proc. Amer. Math. Soc. Vol. 105, No. 2, pp. 429--435 (1989).

    \bibitem{eps_reg_wang_wang}
    Wang, Jie; Wang, Youde. \textit{Rigidity and $\varepsilon$-Regularity Theorems of Ricci Shrinkers}. Preprint. arXiv: 2308.06560.

    \bibitem{We11_compactness}
    Weber, Brian. \textit{Convergence of Compact Ricci Solitons}. Int. Math. Res. Not. IMRN No. 1, pp. 96--118 (2011).

    \bibitem{workman_semicontinuity}
    Workman, Myles. \textit{Upper Semicontinuity of Index Plus Nullity for Minimal and CMC Hypersurfaces}. Preprint. arXiv: 2312.09227v2.

    \bibitem{wright_thesis}
    Wright, Evan. \textit{Ricci-Flat Anti-Self-Dual Asymptotically Locally Euclidean 4-Manifolds}. PhD Thesis (2013).

    \bibitem{grs_scal_curv}
    Zhang, Zhu-Hong. \textit{On the Completeness of Gradient Ricci Solitons}. Proc. Amer. Math. Soc. Vol. 137, No. 8, pp. 2755-2759 (2009).

    \bibitem{zhang_entropy}
    Zhang, Qi S. \textit{Extremals of Log Sobolev Inequality and W Entropy on Noncompact Manifolds}. J. Funct. Anal. Vol. 263, pp. 2051-2101 (2012).
\end{thebibliography}
\end{document}